\DeclareMathOperator{\Span}{span}
\newcommand\diag{\mathop{\rm diag}}
\newcommand\bidiag{\mathop{\rm bidiag}}
\newcommand\supp{\mathop{\rm supp}}
\providecommand{\abs}[1]{\lvert#1\rvert}
\providecommand{\norm}[1]{\lVert#1\rVert}
\newtheorem{theorem}{Theorem}
\newtheorem{lemma}[theorem]{Lemma}
\newtheorem{prop}[theorem]{Proposition}
\newtheorem{cor}[theorem]{Corollary}
\theoremstyle{definition}
\newtheorem{example}[theorem]{Example}
\theoremstyle{remark}
\newtheorem{remark}[theorem]{Remark}
\newcommand{\cK}{{\mathcal{K}}}
\newcommand{\cN}{{\mathcal{N}}}
\newcommand{\cF}{{\mathcal{F}}}
\newcommand{\cS}{\mathcal{S}}
\newcommand{\Chi}{\raise .3ex
\hbox{\large $\chi$}}
\newcommand{\R}{\mathbb{R}}
\newcommand{\N}{\mathbb{N}}
\newcommand{\C}{\mathbb{C}}
\newcommand{\ten}[1]{\boldsymbol{#1}}
\newcommand{\rep}[1]{\mathsf{#1}}
\newcommand{\rk}[1]{\mathsf{#1}}
\newcommand{\sst}[1]{{\scaleto{#1}{4.5pt}}}
\newcommand{\rmap}[1]{\tau(#1)}
\newcommand{\rmapless}[3]{\tau^\sst{<}_{#1,#2} (#3)}
\newcommand{\rmapleq}[3]{\tau^\sst{\leq}_{#1,#2} (#3)}
\newcommand{\rmapgtr}[3]{\tau^\sst{>}_{#1,#2} (#3)}
\newcommand{\rmapgeq}[3]{\tau^\sst{\geq}_{#1,#2} (#3)}
\newcommand{\opleq}[2]{#1^\sst{\leq}_{#2}}
\newcommand{\opgtr}[2]{#1^\sst{>}_{#2}}
\DeclareMathOperator*{\SKP}{\Join}
\newcommand{\unocc}[1]{\colorbox{gray!30}{\makebox(8,8){$\mathsf{#1}$}}}
\newcommand{\occ}[1]{\colorbox{black}{\textcolor{white}{\makebox(8,8){$\mathsf{#1}$}}}}
\newcommand{\vac}{\ten{e}_\mathrm{vac}}
\numberwithin{equation}{section}
\numberwithin{theorem}{section}
\title[Block Structures in Matrix Product States]{Particle Number Conservation and Block Structures in Matrix Product States}
\author{Markus Bachmayr$^1$}
\address{\rm $^1$ Institut f\"ur Mathematik, Johannes Gutenberg-Universit\"at Mainz, Staudingerweg 9, 55128 Mainz, Germany}
\email[Markus Bachmayr]{bachmayr@uni-mainz.de}
\author{Michael G\"otte$^2$}
\address{\rm $^2$ Institut f\"ur Mathematik, Technische Universit\"at Berlin, Stra\ss e des 17. Juni 136, 10623 Berlin, Germany}
\email[Michael G\"otte]{goette@math.tu-berlin.de}
\author{Max Pfeffer$^3$}
\address{\rm $^3$ Fakult\"at f\"ur Mathematik, Technische Universit\"at Chemnitz, Reichenhainer Str.~41, 09107 Chemnitz, Germany}
\email[Max Pfeffer]{max.pfeffer@math.tu-chemnitz.de}
\thanks{M.B.\ acknowledges funding by
 Deutsche Forschungsgemeinschaft (DFG, German Research Foundation) -- Projektnummern 233630050; 211504053 -- TRR 146; SFB 1060. 
M.P. was funded by DFG -- Projektnummern 448293816; 211504053 -- SFB 1060. M.G.\ was funded by DFG (SCHN530/15-1).}
\date{\today}
\begin{document}

\maketitle

\vspace{-18pt}
\begin{abstract}
The eigenvectors of the particle number operator in second quantization are characterized by the block sparsity of their matrix product state representations. This is shown to generalize to other classes of operators. Imposing block sparsity yields a scheme for conserving the particle number that is commonly used in applications in physics. Operations on such block structures, their rank truncation, and implications for numerical algorithms are discussed. Explicit and rank-reduced matrix product operator representations of one- and two-particle operators are constructed that operate only on the non-zero blocks of matrix product states.

\noindent \emph{Keywords.} second quantization, particle number conservation, matrix product states, matrix product operators
\smallskip

\noindent \emph{Mathematics Subject Classification.} {15A69, 65F15, 65Y20, 65Z05}
\end{abstract}

\section{Introduction}

In wavefunction methods of quantum chemistry, one aims to directly approximate the wavefunction of the electrons in a given molecular system.
In the many-electron case, these are defined on extremely high-dimensional spaces. In addition, due to the fermionic nature of electrons, these wavefunctions need to respect certain antisymmetry requirements. Post-Hartree-Fock methods are an established class of wavefunction methods based on approximations of wavefunctions by \emph{Slater determinants}, that is, by antisymmetrized tensor products of single-electron basis functions on $\R^3$. With a judicious choice of such lower-dimensional basis functions, called \emph{orbitals}, these methods can achieve high-accuracy approximations of the wavefunctions corresponding to the lowest-energy states of the system. This is achieved essentially by exploiting near-sparsity of wavefunctions in the basis of all Slater determinants formed from the orbitals.

However, for certain types of problems, for instance \emph{strongly correlated} systems with several competing states of lowest energy, these classical methods typically fail to yield good approximations. Thus more flexible data-sparse parametrizations of the linear combinations of Slater determinants that can be formed from a given finite set of orbitals are of interest. An elegant way of representing such linear combinations of antisymmetric functions is the formalism of \emph{second quantization}, where wavefunctions are represented in terms of the occupation of each orbital by a particle. With respect to a sequence of orthonormal orbitals $\{ \phi_k\}_{k\in\N}$, this leads to a representation of the wavefunction by \emph{occupation numbers} $(\C^2)^\infty$, corresponding to an occupied and an unoccupied state for each orbital. The corresponding space of functions is called \emph{Fock space}.

For electrons with Coulomb interaction in an external potential $V$, one has the corresponding representation of the Hamiltonian acting on occupation number tensors,
\begin{equation}\label{eq:hamil}
    \ten{H} = \sum_{i,j} t_{ij} \ten{a}_{i}^* \ten{a}_{j}  + \sum_{i,j,k,l} v_{ijkl} \ten{a}^*_{i} \ten{a}^*_{j} \ten{a}_{k} \ten{a}_{l}
\end{equation}
 with coefficient tensors $(t_{ij})$ and $(v_{ijkl})$ depending on the orbitals,
 in terms of the \emph{creation operators} $\ten{a}^*_i$ and \emph{annihilation operators} $\ten{a}_i$. These can be thought of as switching particles from the unoccupied to the occupied state of orbital $i$ or back, respectively. The antisymmetry of wavefunctions corresponds to the anticommutation relations
 \begin{equation}\label{eq:anticomm}
     \ten{a}_{i} \ten{a}^*_{j} + \ten{a}^*_{j} \ten{a}_{i} = \delta_{ij}, \quad   \ten{a}^*_{i} \ten{a}^*_{j} + \ten{a}^*_{j} \ten{a}^*_{i} =   \ten{a}_{i} \ten{a}_{j} + \ten{a}_{j} \ten{a}_{i} = 0\,.
 \end{equation}

The second-quantized representation is particularly suitable for the application of low-rank tensor formats such as \emph{matrix product states} (abbreviated MPS), also known as \emph{tensor trains} (abbreviated TT) in the numerical analysis context, or the more general \emph{tree tensor networks} (or \emph{hierarchical tensors}); see \cite{Oseledets:2011:TT,Hackbusch:09,BSU:16}.
Whereas the implementation of wavefunction antisymmetry in such tensor formats is problematic in the real-space representation of wavefunctions, this does not present a problem in the second-quantized representation: the antisymmetry properties are encoded in the representation of operators, and the corresponding occupation numbers describing the wavefunctions can be directly approximated in low-rank tensor formats.
However, in contrast to real-space approximations of wavefunctions \cite{Hackbusch:18}, where the number of electrons is tied to the spatial dimensionality of the problem, this particle number is not fixed in the second-quantized formulation and thus needs to be prescribed explicitly.
 
Prescribing a number of $N$ particles amounts to restricting the eigenvalue problem for $\ten{H}$ to the subspace of those occupation numbers that are also eigenvectors of the
\emph{particle number operator}
 \[
  \ten{P} = \sum_{i} \ten{a}_i^* \ten{a}_i
\]
with eigenvalue $N$. The particle number constraint does not need to be implemented explicitly: as we investigate in detail in this work, every particle number eigenspace corresponds to a certain \emph{block-sparse structure} in the cores of MPS. This fact has a long history in the physical literature (see, e.g., \cite{OR95,Daley:04,McCulloch:07,Schollwoeck:11,SPV11,bauer2011implementing}), where such block sparsity is usually derived from gauge symmetries, such as $U(1)$ symmetry corresponding to particle number conservation. 
Here we use elementary linear algebra to arrive at this block structure, which to the best of our knowledge has not received any attention thus far in a mathematical context. 

Block sparsity can not only be used to build the particle number constraint into the low-rank tensor representations, but it can also be exploited to reduce the costs of operations on MPS. We also consider the implications for analogous representations of linear operators acting on MPS, which are called \emph{matrix product operators} (MPOs). As we show, these can be applied in a form that preserves their low-rank structure and at the same time maintains the block structure of MPS.
 
The existence of such block structures is commonly exploited in applications of MPS in physics for solving eigenvalue problems for general Hamiltonians with one- and two-particle interactions as in \eqref{eq:hamil}; see, for instance, 
\cite{itensor,tensornetwork,tenpy,pytenet}. In these applications, the focus is mainly on \emph{density matrix renormalization group} (DMRG) algorithms \cite{white,Schollwoeck:11}, which operate locally on components of the MPS. The block structures of the MPS in this case appear as block diagonal structures of density matrices computed from the MPS. However, DMRG schemes are known to fail in certain circumstances \cite{Dolfietal:12}, a fact that is related to their local mode of operation.

For designing eigenvalue solvers for MPS that can be guaranteed to converge, an important building block is the eigenvalue residual $\ten{H}\ten{x} -\langle \ten{H}\ten{x},\ten{x}\rangle \ten{x}$. For its efficient evaluation for an MPO representation of $\ten{H}$ and an MPS representation of $\ten{x}$, the respective global block structures of these quantities that we consider here become important. In addition to describing the block structure-preserving action of $\ten{H}$, we also show that the representation rank of $\ten{H}$ can be substantially reduced if the tensor $(v_{ijkl})$ satisfies certain sparsity conditions that can be satisfied with a suitable choice of orbitals.
 
 As one main contribution of this work, we thus consider from the point of view of numerical linear algebra the use of block-sparse MPS as a means of enforcing particle number constraints in solving eigenvalue problems as they arise in quantum chemistry. In particular, we consider the realization of basic operations on block-structured MPS, how Hamiltonians acting on MPS in a compatible block-structured form can be implemented, and their respective computational complexity. We also consider some basic effects of the block structure of MPS on the convergence of eigensolvers.
 
 More generally, we show that a similar block structure is present whenever MPS (or tensor trains) are restricted to an eigenspace of a diagonal operator with a certain Laplacian-type structure, with the particle number operator as a particular example. 
Without explicitly enforcing the block structure, in exact arithmetic such a constraint is also preserved by many operations on MPS; due to issues of numerical stability, however, this is in general no longer true in numerical computations: when working on full MPS without explicit block structure, the particle number will in general accumulate numerical errors over the course of iterative schemes.

The outline of this paper is as follows: In Section \ref{sec:prelim}, we introduce basic notions and notation of MPS. In Section \ref{sec:blockstructure}, we consider the block structure of MPS under particle number constraints and some of their consequences, and we consider the realization of standard operations on MPS exploiting this block structure in Section \ref{sec:blockops}. 
In Section \ref{sec:mpo}, we consider low-rank representations of one- and two-electron operators in Hamiltonians as well as their interaction with block-structured MPS.
Finally, in \ref{sec:numer}, we discuss basic implications for iterative eigensolvers and numerical illustrations.

\section{Preliminaries}\label{sec:prelim}

Since we are mainly interested in real-valued Hamiltonians as they arise in molecular systems, we restrict ourselves to real-valued occupation numbers in $(\R^2)^\infty$. However, the following considerations immediately generalize to the complex-valued case.
We consider Fock space restricted to a fixed number $K \in \N$ of orbitals, corresponding to occupation numbers in $\cF^K := (\R^2)^{K}$, which we regard as tensors of order $K$ with indices $\alpha \in \{ 0, 1\}^K$. This space is spanned by the unit vectors $e^\alpha = e^{\alpha_1} \otimes \cdots \otimes e^{\alpha_K}$, where $e^{\alpha_k}=(\delta_{\alpha_k,\beta})_{\beta =0,1}$ are Kronecker vectors for $k = 1,\ldots, K$.

\subsection{Matrix product states and operators}

In our notation, we follow \cite{kazeev_low-rank_2012,BK:20} with some adaptations.
The \emph{matrix product state} (or \emph{tensor train}) representation of $\ten{x} \in \cF^K$ with \emph{ranks} $r_1, \ldots, r_{K-1} \in \N_0$ reads
\begin{equation*} \label{eq:mps}
\ten{x}_\alpha = \ten{x}_{\alpha_1, \ldots, \alpha_K} =
		\sum_{j_1=1}^{r_1}
		\cdots
		\sum_{j_{K-1}=1}^{r_{K-1}}
		X_1(j_0, \alpha_1, j_1)
		X_2(j_1, \alpha_2, j_2)
		\, \cdots \,
		X_K(j_{K-1}, \alpha_K, j_K),
\end{equation*}
where for notational reasons we set $j_0 = j_K = 1$, $r_0 = r_K = 1$.
For the third-order component tensors $X_k$ in such a representation, called \emph{cores}, we write
\begin{equation}\label{eq:cores}
 \rep{X} = (X_1,\ldots, X_K),\quad    X_k = \bigl(  X_k(j_{k-1}, \alpha_k, j_k)  \bigr)_{\substack{j_{k-1}  = 1,\ldots, r_{k-1}, \\ \alpha_k = 0,1,  \\ j_k = 1,\ldots,r_k}} \,.
\end{equation}
For linear mappings on $\cF^K$, we have an analogous \emph{matrix product operator} (MPO) representation
\begin{equation}\label{eq:mpo}
  \ten{M}_{\alpha_1, \ldots, \alpha_K,\beta_1, \ldots, \beta_K}
   = \sum_{j_1=1}^{r_1}
		\cdots
		\sum_{j_{K-1}=1}^{r_{K-1}}
		M_1(j_0, \alpha_1, \beta_1, j_1)
		\, \cdots \,
		M_K(j_{K-1}, \alpha_K, \beta_K , j_K),
\end{equation}
where we similarly write $\rep{M} = (M_1,\ldots,M_K)$.

For specifying the $k$-th component of an MPS or MPO explicitly, we use the notation
\begin{equation*}\label{eq:corenotation1}
   X^{ [j_{k-1}, j_k] }_k = \bigl( X_k(j_{k-1}, \alpha_k, j_k) \bigr)_{\alpha_k=0,1},   \quad 
   M^{ [j_{k-1}, j_k] }_k = \bigl( M(j_{k-1}, \alpha_k, \beta_k, j_k) \bigr)^{\beta_k=0,1}_{\alpha_k=0,1}\,.
\end{equation*}
Note that here and in the following, we write row indices in subscript and column indices in superscript, indicating that 
$M_k^{ [ j_{k-1}, j_k ] }$ is a matrix. In terms of the vectors $X_k^{[j_{k-1}, j_k]} \in \R^{\{0,1\}}$, a core $X_k$ is then given by the rankwise block representation
\begin{equation}\label{eq:corenotation2}
   X_k = \begin{bmatrix} X_k^{[1,1]} & \cdots &  X_k^{[1,r_k]}  \\ \vdots & \ddots & \vdots \\ X_k^{[r_{k-1},1]} & \cdots & X_k^{[r_{k-1},r_k]}  \end{bmatrix},
\end{equation}
with the analogous notation for the matrices $ M_k^{ [j_{k-1}, j_k] }$. 

We define multiplication of a core $X_k$ by a matrix $G$ of appropriate size from the left or right on its indices $j_{k-1}$ or $j_k$, respectively, by
\[
\begin{aligned}
  (GX_k)(j_{k-1}, \alpha, j_k)  &=  \sum_{j' = 1}^{r_{k-1}} G_{j_{k-1},j'} X_k(j', \alpha_k, j_k) , \\
   (X_k G)(j_{k-1},\alpha,j_k)  &=  \sum_{j' = 1}^{r_{k}} X_k(j_{k-1}, \alpha_k, j') G_{j', j_k}, 
\end{aligned}
\]
with the analogous definition for the components $M_k$ in the representation of operators.

Complementing \eqref{eq:corenotation2}, we also introduce
\[
   X^{ \{ \alpha_k \} }_k  = \bigl( X_k(j_{k-1}, \alpha_k, j_k) \bigr)_{j_{k-1}=1,\ldots,r_{k-1}}^{j_k=1,\ldots,r_k} , \quad  M_k^{ \{ \alpha_k, \beta_k \} }  = \bigl( M_k(j_{k-1}, \alpha_k, \beta_k, j_k) \bigr)_{j_{k-1}=1,\ldots,r_{k-1}}^{j_k=1,\ldots,r_k} \,.
\]
Again, the subscripts and superscripts indicate that $X^{ \{ \alpha_k \} }_k$ and $M_k^{ \{ \alpha_k, \beta_k \} }$ are matrices.

For a compact way of writing \eqref{eq:mps} in terms of the cores \eqref{eq:cores}, we introduce the \emph{Strong Kronecker product}, 
\[
   ( X_1 \SKP X_2 )^{ \{ \alpha_1 \alpha_2 \} } = X_1^{ \{ \alpha_1 \}  }  X_2^{ \{ \alpha_2 \}  }, 
   \quad 
    ( M_1 \SKP M_2 )^{ \{ \alpha_1 \alpha_2, \beta_1 \beta_2 \} } = M_1^{ \{ \alpha_1 , \beta_1\}  }  M_2^{ \{ \alpha_2, \beta_2 \}  }.
\]
For example, for two cores $X, Y$ of ranks $2\times 2$, we obtain
\[
\begin{multlined}
  	\begin{bmatrix}
	X^{[1,1]} & X^{[1,2]}\\
	X^{[2,1]} & X^{[2,2]}\\
	\end{bmatrix}
	\SKP
	\begin{bmatrix}
	Y^{[1,1]} & Y^{[1,2]}\\
	Y^{[2,1]} & Y^{[2,2]}\\
	\end{bmatrix}  \qquad \\
 \qquad	=
	\begin{bmatrix}
	X^{[1,1]} \otimes Y^{[1,1]} + X^{[1,2]} \otimes Y^{[2,1]} & X^{[1,1]} \otimes Y^{[1,2]} + X^{[1,2]} \otimes Y^{[2,2]}\\
	X^{[2,1]} \otimes Y^{[1,1]} + X^{[2,2]} \otimes Y^{[2,1]} & X^{[2,1]} \otimes Y^{[1,2]} + X^{[2,2]} \otimes Y^{[2,2]}\\
	\end{bmatrix}
	\end{multlined}\,.
\]
With this notation, we have
\[
     [\ten{x}] = X_1 \SKP X_2 \SKP \cdots \SKP X_K\,,
\]
where the block $[\ten{x}] \in \R^{ 1 \times \{0,1\}^K \times 1}$ has leading and trailing dimensions of mode size 1.
For simplicity, we ignore such singleton dimensions, that is,
we identify $[\ten{x}]$ with the tensor $\ten{x} \in \R^{\{0,1\}^K}$ of order $K$. With this identification, for the \emph{representation mapping} $\tau$, we write \eqref{eq:mps} as
\[
   \rmap{\rep{X}} :=  X_1 \SKP X_2 \SKP \cdots \SKP X_K,
\]
where we have $\ten{x} = \rmap{\rep{X}}$. We use partial representation mappings that assemble the first or last cores of a matrix product state. We set
\[
   \rmapless{k}{j}{\rep{X}} = \Bigl(  ( X_1 \SKP \cdots \SKP X_{k-1} )_{1,\alpha,j} \Bigr)_{\alpha \in \{0,1\}^{k-1}}, 
   \quad    \rmapleq{k}{j}{\rep{X}} = \Bigl(  ( X_1 \SKP \cdots \SKP X_{k} )_{1,\alpha,j} \Bigr)_{\alpha \in \{0,1\}^{k}},
\]
and analogously
\[
 \rmapgtr{k}{j}{\rep{X}} = \Bigl(  ( X_{k+1} \SKP \cdots \SKP X_{K} )_{1,\alpha,j} \Bigr)_{\alpha \in \{0,1\}^{K-k-1}}, 
  \quad
  \rmapgeq{k}{j}{\rep{X}}  = \Bigl(  ( X_{k} \SKP \cdots \SKP X_{K} )_{1,\alpha,j} \Bigr)_{\alpha \in \{0,1\}^{K-k}} \,.
\]
The representation $\rep{X}$ is called \emph{left-orthogonal} if the vectors 
$\rmapless{K}{j}{\rep{X}}$ are orthonormal for $j=1,\ldots,r_{K-1}$, and \emph{right-orthogonal} if 
$\rmapgtr{1}{j}{\rep{X}}$ are orthonormal for $j=1,\ldots,r_{1}$. 

As a second product operation between an MPS core $X_k$ of ranks $r$ and an MPO core $M_k$ of ranks $r'$(or analogously between two MPO cores), with $j_k = 1,\ldots,r_k$, $j_k'=1,\ldots,r_k'$ for each $k$, we introduce the \emph{mode core product},
\[
\left( M_k \bullet X_k \right)^{ [ r_{k-1}'(j_{k-1}-1) + j'_{k-1}, \,r_k' (j_k-1) + j_k'] } = M_k^{[j_{k-1}', j_k']} X_k^{ [j_{k-1}, j_k] } \,.
\]
For matrix product operators a $\ten{M} = \rmap{\rep{M}}$ and matrix product states $\ten{x} = \rmap{\rep{X}}$, we have
\[
     \ten{M} \ten{x}  = \tau( M_1 \bullet X_1, \ldots, M_K \bullet X_K) \,.
\]

Finally, we introduce a \emph{lift product} of a matrix $W \in \R^{r \times r'}$ with a matrix $M \in \R^{2 \times 2}$ or vector $x \in \R^2$, that is to be understood as a Kronecker product with reordered indices,
\begin{align*}
&\uparrow: \mathbb{R}^{r \times r'} \times \mathbb{R}^{2\times 2} \rightarrow  \mathbb{R}^{r \times 2 \times 2 \times r'}, & (W \uparrow M)_{j \alpha \beta j'} &= W_{j j'}M_{\alpha \beta}, \\
&\uparrow: \mathbb{R}^{r \times r'} \times \mathbb{R}^{2} \rightarrow  \mathbb{R}^{r \times 2 \times r'}, & (W \uparrow x)_{j \alpha j'} &= W_{j j'}x_{\alpha},
\end{align*}  
thus resulting in an MPO core or an MPS core, respectively.

\subsection{Singular value decomposition}
For $k=1,\ldots,K$, using $(\alpha_1,\ldots,\alpha_k)$ as row index and $(\alpha_{k+1},\ldots, \alpha_K)$ as column index, one obtains corresponding \emph{matricizations} (or \emph{unfoldings}) of a tensor $\ten x \in \cF^K$.
Any representation $\rep X$ of $\ten x$ can be transformed by operations on its cores such that these matricizations are in SVD form. This representation is known as \emph{Vidal decomposition} \cite{vidal03} or as \emph{tensor train SVD (TT-SVD)} \cite{Oseledets:2011:TT} and also arises as a special case of the \emph{hierarchical SVD} \cite{Grasedyck:2010:HierarchicalSVD} of more general tree tensor networks.

Specifically, $\rep X$ with $\ten x=\rmap{\rep X}$ is in \emph{left-orthogonal TT-SVD form} if for $k = 1,\ldots, K-1$, $\{ \rmapleq{k}{j}{\rep{X}}  \}_{j = 1,\ldots, r_{k}}$ are orthonormal and $\{ \rmapgtr{k}{j}{\rep X} \}_{j = 1,\ldots, r_{k}}$ are orthogonal, where $\sigma_{k,j}(\ten x) := \norm{\rmapgtr{k}{j}{\rep X}}_2$ with $\sigma_{k,1}(\ten x) \geq \ldots \geq \sigma_{k,r_k}(\ten x)$ are the singular values of the $k$-th matricization. 
Analogously, $\rep X$ is in \emph{right-orthogonal TT-SVD form} if for $k = 1,\ldots, K-1$, $\{ \rmapleq{k}{j}{\rep{X}}  \}_{j = 1,\ldots, r_{k}}$ are orthogonal with $\norm{\rmapleq{k}{1}{\rep X}}_2 \geq \ldots \geq \norm{\rmapleq{k}{k}{\rep X}}_2$ and $\{ \rmapgtr{k}{j}{\rep X} \}_{j = 1,\ldots, r_{k}}$ are orthonormal.
These forms can be obtained by the scheme given in \cite[Algorithm~1]{Oseledets:2011:TT}.

The rank truncation of either SVD form yields quasi-optimal approximations of lower ranks \cite{Oseledets:2011:TT,Grasedyck:2010:HierarchicalSVD}: let $\rep X$ be given in TT-SVD form with ranks $r_1,\ldots,r_{K-1}$, and denote by $\operatorname{trunc}_{s_1,\ldots,s_K}(\rep X)$ its truncation to ranks $s_k\leq r_k$, $k=1,\ldots,K$, then
\begin{equation}\label{eq:ttsvdquasiopt}
\begin{multlined}
  \norm{ \rmap{\rep X} - \rmap{\rep{\operatorname{trunc}}_{s_1,\ldots,s_K}(\rep X)} }_2
   \leq \biggl(\sum_{k=1}^{K-1} \sum_{j=1}^{r_k} \sigma_{k,j}^2 \biggr)^{\frac12} \\
  \qquad\qquad\qquad \leq \sqrt{K-1} \min\bigl\{ \norm{\rmap{\rep X} - \rmap{\rep Y}}_2 \colon \rep Y \text{ of ranks $s_1,\ldots, s_{K-1}$} \bigr\}.
   \end{multlined}
\end{equation}
Using the above error bound in terms of the matricization singular values, one obtains an approximation $\operatorname{trunc}_\varepsilon(\rep X)$ with $\norm{\rmap{\rep X} - \rmap{\operatorname{trunc}_\varepsilon(\rep X)}}_2 \leq \varepsilon$ for any $\varepsilon>0$ by truncating ranks according to the smallest singular values.

\subsection{Tangent space projection}\label{ssc:tangentspace}
It is well known that MPS of fixed multilinear rank constitute an embedded smooth submanifold of the tensor space $\cF^K$ \cite{holtz_manifolds_2012}. The tangent space of this manifold can be explicitly characterized, and the ranks of the tangent vectors at $\ten x$ in MPS representation are at most twice the ranks of $\ten x$. 

Let $\ten x = \rmap{\rep{U}} = \rmap{\rep V}$ where $\rep{U} = (U_1,\cdots,U_K)$ is in left- and $\rep{V} = (V_1,\cdots,V_K)$ is in right-orthogonal form.
The projection operator onto the tangent space at $\ten x$ is given by
\begin{equation*}
\ten{Q}_{\ten{x}}= \sum_{k=1}^K \bigl( \ten Q_{\ten x}^{k,1} - \ten Q_{\ten x}^{k,2} \bigr),
\end{equation*}
where, identifying mappings with their representation matrices, for $k = 1,\ldots,K$,
\begin{equation*}
\ten Q_{\ten x}^{k,1} = \biggl(\sum_{j=1}^{r_{k-1}}\rmapless{k}{j}{\rep{U}}\, \langle \rmapless{k}{j}{\rep{U}},\, \cdot\, \rangle \biggr)
 \otimes I \otimes \biggl(\sum_{j=1}^{r_{k}}{\rmapgtr{k}{j}{\rep{V}}}\,\langle{\rmapgtr{k}{j}{\rep{V}}},\,\cdot\,\rangle \biggr),
\end{equation*}
for $k = 1,\ldots,K-1$,
\begin{equation*}
\ten Q_{\ten x}^{k,2} = \biggl(\sum_{j=1}^{r_{k}}\rmapleq{k}{j}{\rep{U}}\,\langle \rmapleq{k}{j}{\rep{U}},\,\cdot\,\rangle \biggr) \otimes \biggl(\sum_{j=1}^{r_{k}}{\rmapgtr{k}{j}{\rep{V}}}\,\langle {\rmapgtr{k}{j}{\rep{V}}},\,\cdot\,\rangle\biggr),
\end{equation*}
and $\ten Q_{\ten x}^{K,2} = 0$ for $k=K$. 

\subsection{Second quantization} The operators of second quantization can be represented as mappings on $\cF^K$ as follows: with the elementary components
\[
  S = \begin{pmatrix} 1 & 0 \\ 0 & -1 \end{pmatrix}, \quad
   A = \begin{pmatrix} 0 & 1 \\ 0 & 0 \end{pmatrix}, \quad 
   I = \begin{pmatrix} 1 & 0 \\ 0 & 1 \end{pmatrix},
\]
the \emph{annihilation operator} $\ten{a}_i$ on $\cF^K$ reads
\begin{equation}\label{def:anniloperator}
  \ten{a}_i =  \biggl( \bigotimes_{k=1}^{i-1} S \biggr) \otimes A \otimes \biggl( \bigotimes_{k=i+1}^{K} I \biggr),
\end{equation}
and the corresponding \emph{creation operator} is $\ten{a}_i^*$.
The particle number operator on $\cF^K$ is given by
\[
  \ten{P} = \sum_{i=1}^K \ten{a}_i^* \ten{a}_i.
\]
In addition, we introduce the truncated versions
\begin{align}\label{eq:partnumtrunc}
 \opleq{\ten{P}}{k} = \sum_{i = 1}^k  \biggl( \bigotimes_{\ell=1}^{i-1} I \biggr) \otimes A^*A \otimes \biggl( \bigotimes_{\ell=i+1}^{k} I \biggr),
  \quad
    \opgtr{\ten{P}}{k} = \sum_{i = k+1}^K  \biggl( \bigotimes_{\ell=k+1}^{i-1} I \biggr) \otimes A^*A \otimes \biggl( \bigotimes_{\ell=i+1}^{K} I \biggr),
\end{align}
which act only on the left and right sections, respectively, of a matrix product state.

\section{Block Structure of Matrix Products States}\label{sec:blockstructure}

In this section we characterize the block sparsity of an MPS $\rep{X}$ such that $\ten{x}=\rmap{\rep{X}}$ is an eigenvector of the particle number operator $\ten{P}$, or in fact of any operator that shares a certain structural feature of $\ten{P}$. 
We first formulate the result for general tensors $\ten x \in \mathbb{R}^{n_1\times\cdots \times n_K}$ and then obtain the corresponding result for eigenvectors of $\ten{P}$ in $\cF^K$ as a special case.
While the definitions of Section \ref{sec:prelim} are given for tensors in $\cF^K$ for simplicity, they immediately carry over to general tensors in $\mathbb{R}^{n_1\times\cdots \times n_K}$ with indices in 
$\mathcal{N} = \bigtimes_{k=1}^K \{0,1,\ldots,n_k-1\}$,
 where we abbreviate partial index sets for modes $k_1$ to $k_2$ by
$\mathcal{N}_{k_1}^{k_2} = \bigtimes_{k={k_1}}^{k_2} \{0,1,\ldots,n_k-1\}$.

For $\ten{P}$, the block sparsity of $\rep{X}$ that we obtain is of the following form:  For each $k$, the matrices $X^{\{0\}}_k$ and $X^{\{1\}}_k$ have block structure with nonzero blocks only on the main diagonal for $X^{\{0\}}_k$, and only on the first superdiagonal for $X^{\{1\}}_k$. Intuitively, this can be interpreted as follows: each block corresponds to a certain number of occupied orbitals to the left of $k$. For $X^{\{0\}}_k$, this number does not change. For the occupied state, in $X^{\{1\}}_k$ the positions of the blocks correspond to increasing the number of particles by one.

More generally, we obtain such block sparsity for so-called \emph{Laplace-like} operators \cite{kazeev_low-rank_2012} on $\mathbb{R}^{n_1\times\cdots \times n_K}$ of the form
\begin{subequations}\label{eq:Ldef}
\begin{equation}\label{eq:Ldef1}
\ten{L} = \sum_{k=1}^K  \biggl( \bigotimes_{\ell=1}^{k-1} I \biggr) \otimes L_k \otimes \biggl( \bigotimes_{\ell=k+1}^{K} I \biggr)
\end{equation}
with diagonal matrices 
\begin{equation}\label{eq:Ldef2}
L_k = \text{diag}(\lambda_{k,0},\lambda_{k,1},\ldots,\lambda_{k,n_k-1}),
\end{equation}
\end{subequations}
where in the case of $\ten{P}$, we have $L_k = A^*A$.
The unit vectors $e^{\alpha_1} \otimes \cdots \otimes e^{\alpha_K}$ are eigenvectors of such $\ten{L}$, with eigenvalues given by 
\[ \lambda_\alpha = \sum_{k=1}^K \lambda_{k,\alpha_k}, \quad{\alpha\in \mathcal{N}}.
\]

\begin{remark}
	Note that for any $\ten L$ of the form \eqref{eq:Ldef1} with general symmetric matrices $L_k$, there exists $\ten U = U_1 \otimes \cdots \otimes U_K$ with orthogonal matrices $U_1,\ldots, U_K$ such that $\ten{\tilde L} = \ten U \ten L \ten U^\top$ satisfies also \eqref{eq:Ldef2}, and thus the following considerations apply to $\ten{ \tilde L}$. 
\end{remark}

Let $\ten{x} = \rmap {\rep X}$ as above satisfy $\ten{L}\ten{x} = \lambda \ten{x}$, $\ten x \neq 0$. For such $\lambda$, we define the subset $I_\lambda \subset  \mathcal{N}$ of all $\alpha$ such that $\lambda = \lambda_\alpha$. Furthermore, for each $\alpha\in I_\lambda$ and $k=1,\ldots,K-1$ we can split the eigenvalue $\lambda$ in the form
\[
\lambda = \lambda_{k,\alpha}^\sst{\leq} + \lambda_{k,\alpha}^\sst{>} := \sum_{\ell=1}^k \lambda_{\ell,\alpha_\ell} +\sum_{\ell=k+1}^K \lambda_{\ell,\alpha_\ell}.
\]
Using the notation from \eqref{eq:partnumtrunc}, the summands $\lambda_{k,\alpha}^\sst{\leq }, \lambda_{k,\alpha}^\sst{>}$ are eigenvalues of the truncated versions $\ten{L}_k^\sst{\leq}$ and $\ten{L}_k^\sst{>}$ of $\ten{L}$. We write $\cK_{\lambda,k}$ for the set of all $\lambda_{k,\alpha}^\sst{\leq }$ for given $\lambda$ and $k$, that is,
\[
  \cK_{\lambda,k} = \left\{  \sum_{\ell=1}^k \lambda_{\ell,\alpha_\ell}  \colon \alpha \in I_\lambda   \right\}{,}
\]
{where we have $\mathcal{K}_{\lambda,0} = \{0\}$}.
In full representation, $\ten x$ necessarily has a certain sparsity pattern, since $\ten x_\alpha = 0$ if $\alpha \notin I_\lambda$. We can exploit the invariance 
\begin{equation}
\begin{aligned}\label{eq:rotmps}
\ten x &= X_1 \SKP X_2 \SKP \cdots \SKP X_{K-1} \SKP X_K \\
&= X_1 G_1 \SKP G_1^{-1} X_2 G_2 \SKP \cdots \SKP G_{K-2}^{-1} X_{K-1} G_{K-1} \SKP G_{K-1}^{-1} X_K 
\end{aligned}	
\end{equation}
for invertible $G_k \in \R^{r_k \times r_k}$ for $k = 1,\ldots,K-1$ in order to obtain a block structure for the component tensors $X_k$ , which is our following main result. {Here and in what follows, for $\cK \subset \R $ and $\lambda \in \R$, we write $\mathcal{K} - \lambda := \{\mu \in \R: \mu+\lambda \in \mathcal{K} \}$.}

\begin{theorem}\label{thm:generalblockstructure}
	Let $\ten{x} \in \mathbb{R}^{n_1\times\cdots\times n_K}$, $\ten{x}\neq 0$, have the representation $\ten{x} = \rmap{\rep{X}}$ with minimal ranks $\rk{r} = (r_1,\ldots, r_{K-1})$.
	Then one has $\ten{L} \ten{x} = \lambda \ten{x}$ with $\ten L$ as in \eqref{eq:Ldef} precisely when $\rep{X}$ can be chosen such that the following holds:
	for $k=1,\ldots, K$ and for all  
$\mu \in \cK_{\lambda,k}$,
	there exist $\cS_{k,\mu} \subseteq \{ 1,\ldots, r_k\}$ 
	such that
	\begin{equation}\label{evpi2}
	\opleq{\ten{L}}{k}  \rmapleq{k}{j}{\rep{X}}   =   \mu   \rmapleq{k}{j}{\rep{X}}, \quad    \opgtr{\ten{L}}{k} \rmapgtr{k}{j}{\rep{X}} = (\lambda-\mu) \rmapgtr{k}{j}{\rep{X}} ,  \quad j \in \cS_{k,\mu}     ,
	\end{equation}
	and the matrices $X^{\{\beta\}}_{k}$, $\beta = 0,1,\ldots,n_k-1$, have nonzero entries only in the blocks 
	\begin{equation}\label{blocki2}    
	\begin{aligned} 
	& X^{\{\beta\}}_{k}\big|_{\cS_{k-1,\mu} \times \cS_{k,\mu+\lambda_{k,\beta}}} & & \text{for $\mu \in \cK_{\lambda,k-1}\cap ( \cK_{\lambda,k}-\lambda_{k,\beta})$},  
	\end{aligned}
	\end{equation}
	where we set $\cS_{0,0} = \cS_{K,\lambda} = \{1\}$.
\end{theorem}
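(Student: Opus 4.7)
The plan is to prove both implications by exploiting the gauge freedom \eqref{eq:rotmps} to choose representations that are block-adapted simultaneously to the eigenspace decompositions at every cut. The driving observation is that, since $\ten{L} = \opleq{\ten{L}}{k} + \opgtr{\ten{L}}{k}$ and the two summands commute and act on disjoint tensor factors, the $\lambda$-eigenspace of $\ten{L}$ decomposes via the tensor product structure as
\[
E_\lambda = \bigoplus_{\mu \in \cK_{\lambda,k}} E^\leq_{k,\mu} \otimes E^>_{k,\lambda-\mu},
\]
where $E^\leq_{k,\mu}$ and $E^>_{k,\lambda-\mu}$ are the corresponding eigenspaces of $\opleq{\ten{L}}{k}$ and $\opgtr{\ten{L}}{k}$, and the summands are mutually orthogonal because the truncated operators are self-adjoint. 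This decomposition controls both directions of the equivalence.

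For the reverse direction, I assume that a representation $\rep{X}$ satisfying \eqref{evpi2} and \eqref{blocki2} exists. For any fixed $k$, the identity \eqref{evpi2} implies that $\rmapleq{k}{j}{\rep{X}} \otimes \rmapgtr{k}{j}{\rep{X}}$ is a $\ten{L}$-eigenvector with eigenvalue $\mu + (\lambda-\mu) = \lambda$ whenever $j \in \cS_{k,\mu}$. Since $\ten{x}$ equals the sum over $j \in \{1,\ldots,r_k\}$ of these pairings and every index belongs to some $\cS_{k,\mu}$, the tensor $\ten{x}$ is a sum of $\lambda$-eigenvectors, so $\ten{L}\ten{x} = \lambda\ten{x}$.

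For the forward direction, I assume $\ten{L}\ten{x} = \lambda\ten{x}$ and construct a suitable $\rep X$ by a block-refined TT-SVD. At each cut $k$, the matricization $X_{[k]}$ has minimal rank $r_k$, and the direct sum decomposition above lets me split $X_{[k]} = \sum_\mu X_{[k]}^{(\mu)}$ with $X_{[k]}^{(\mu)}$ supported on $E^\leq_{k,\mu} \otimes E^>_{k,\lambda-\mu}$. Performing an SVD of each block separately and concatenating yields an SVD of $X_{[k]}$ whose left (resp. right) singular vectors each lie entirely in some $E^\leq_{k,\mu}$ (resp.\ $E^>_{k,\lambda-\mu}$). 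Defining $\cS_{k,\mu}$ as the index set of singular vectors coming from the $\mu$-block gives \eqref{evpi2}, with $r_k = \sum_\mu \lvert \cS_{k,\mu}\rvert$ by orthogonality of the blocks and minimality of the ranks.

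To recover \eqref{blocki2}, I use the defining relation $\rmapleq{k}{j'}{\rep{X}} = \sum_{j,\beta} X_k(j,\beta,j') \, \rmapless{k}{j}{\rep{X}} \otimes e^\beta$. For $j \in \cS_{k-1,\mu'}$, the vector $\rmapless{k}{j}{\rep{X}} \otimes e^\beta$ is an eigenvector of $\opleq{\ten{L}}{k}$ with eigenvalue $\mu' + \lambda_{k,\beta}$; for $j' \in \cS_{k,\mu}$, the left-hand side lies in the $\mu$-eigenspace. Since the family $\{\rmapless{k}{j}{\rep{X}} \otimes e^\beta\}_{j,\beta}$ is orthonormal, projecting onto the $\mu$-eigenspace forces $X_k(j,\beta,j') = 0$ unless $\mu' + \lambda_{k,\beta} = \mu$, which is precisely \eqref{blocki2}. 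The main obstacle I anticipate is the compatibility across cuts: the block-refined SVD is constructed independently at each $k$, but a single representation $\rep{X}$ must realize all of them simultaneously. This is handled by the observation that the gauge transformations $G_k$ converting the standard TT-SVD into the block-refined form are block-diagonal with respect to the $\cS_{k,\mu}$ partition, so carrying out the left-to-right TT-SVD algorithm of \cite[Algorithm~1]{Oseledets:2011:TT} and reorganizing each intermediate SVD into blocks produces a globally consistent $\rep{X}$.
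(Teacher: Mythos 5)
Your proposal is correct in substance, and it organizes the argument in a genuinely different way from the paper. The paper proves the forward implication by induction over $k$: it slices the partial tensors, invokes orthogonality of eigenvectors of the self-adjoint truncated operators, compensates for the non-orthogonality of the representation through the Gram matrices $G_k$ (invertible by minimality of ranks), installs the block structure by Householder-based orthogonal gauges, and treats the last core separately; the reverse implication is read off from the expansion of $\rmap{\rep X}$ into elementary tensors, each of which is a $\lambda$-eigenvector by \eqref{blocki2}. You instead promote to a standalone lemma the structural fact the paper uses only locally — the factorization $E_\lambda=\bigoplus_{\mu\in\cK_{\lambda,k}}E^{\leq}_{k,\mu}\otimes E^{>}_{k,\lambda-\mu}$ at every cut — and then work globally: block-refined SVDs give bases of the matricization column and row spaces adapted to this factorization, \eqref{evpi2} is immediate, and \eqref{blocki2} follows by expanding $\rmapleq{k}{j'}{\rep X}$ in the family $\{\rmapless{k}{j}{\rep X}\otimes e^\beta\}_{j,\beta}$ and projecting onto eigenspaces of $\opleq{\ten L}{k}$. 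Your route dispenses with the induction, the Gram-matrix square roots, and the Householder construction; what the paper's induction buys is exactly a verification of the step you call compatibility, namely that the adapted bases at consecutive cuts fit into one representation.

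That compatibility step is the one place where your argument, as written, needs repair, though the repair is short. The claim that the gauge relating the standard TT-SVD to the block-refined form is block-diagonal with respect to the $\cS_{k,\mu}$ partition is false in general: when singular values belonging to different $\mu$-blocks coincide, the standard TT-SVD may mix the blocks, and the correcting gauge is then precisely \emph{not} block-diagonal — this degeneracy is the phenomenon discussed in Remark~\ref{rem:ttsvd} and Section~\ref{ssc:rounding}. What you actually need is simpler and bypasses the TT-SVD entirely: by \eqref{eq:rotmps}, applying gauges $G_1,\ldots,G_{K-1}$ simultaneously gives $\widetilde X_1\SKP\cdots\SKP\widetilde X_\ell=(X_1\SKP\cdots\SKP X_\ell)\,G_\ell$ by telescoping, so the partial products at cut $\ell$ depend on $G_\ell$ alone. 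Hence, starting from any minimal-rank representation, you may choose each $G_k$ independently so that the basis $\{\rmapleq{k}{j}{\rep X}\}_j$ of the $k$-th column space (which decomposes compatibly with your eigenspace factorization, since $\ten x\in E_\lambda$) becomes block-adapted at every cut at once; your projection arguments then yield \eqref{evpi2} and \eqref{blocki2} verbatim. Two smaller points: in the reverse direction you use that every $j\in\{1,\ldots,r_k\}$ lies in some $\cS_{k,\mu}$, which is not among the theorem's hypotheses — either derive it (a column of $X_k$ outside $\bigcup_\mu\cS_{k,\mu}$ is zero by \eqref{blocki2}, contradicting minimality of $r_k$) or argue as the paper does via elementary tensors; and in the derivation of \eqref{blocki2}, orthonormality of $\{\rmapless{k}{j}{\rep X}\otimes e^\beta\}_{j,\beta}$ holds in your SVD-based construction but is more than you need — linear independence, guaranteed by minimality of $r_{k-1}$, already forces the coefficients outside the admissible blocks to vanish, which also keeps the argument valid after the gauge-based repair above.
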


\begin{proof}
We first show that $\ten{L} \ten{x} = \lambda \ten{x}$ implies that \eqref{evpi2}, \eqref{blocki2} hold, proceeding by induction over $k$. Thus, let $k = 1$. For fixed $\beta \in\{ 0,\ldots, n_1-1 \}$ we define
\[
\ten{y}^\beta = \bigl(  \ten{x}_{\beta,\hat\alpha} \bigr)_{\hat \alpha \in \mathcal{N}_2^K}.
\]
Then, by the definition of $\ten{L}$ and by our assumption,
\[
\sum_{\beta=0}^{n_1-1} e^\beta\otimes \opgtr{\ten{L}}{1}\ten{y}^\beta + \lambda_{1,\beta}e^\beta\otimes \ten{y}^\beta = \ten{L} \ten{x} = \lambda\ten{x} =  \sum_{\beta=0}^{n_1-1} e^\beta \otimes \lambda\ten{y}^\beta.
\]
Consequently $\opgtr{\ten{L}}{1}\ten y^\beta = (\lambda - \lambda_{1,\beta})\ten y^\beta $ for each $\beta$, and thus either $\ten y^\beta = 0$ or $\ten{y}^\beta$ is an eigenvector of a self-adjoint linear mapping. Orthogonality of eigenvectors with distinct eigenvalues implies $\langle \ten{y}^{\beta}, \ten{y}^{\beta'}\rangle = 0$ if $ \lambda_{1,\beta} \neq  \lambda_{1,\beta'}$. Writing $\ten{y}^\beta = X_1^{\{\beta\}} X_2 \SKP  \cdots \SKP X_K$,  we obtain
\begin{equation*}\label{eq:orthk1}
0 = \langle \ten{y}^{\beta}, \ten{y}^{\beta'}\rangle = \langle X_1^{\{\beta\}} X_2 \SKP  \cdots \SKP X_K, X_1^{\{\beta'\}} X_2 \SKP  \cdots \SKP X_K \rangle = \langle  G_1 X^{\{\beta\}}_1, X^{\{\beta'\}}_1\rangle
\end{equation*}
with 
\[
{G}_1 =\bigl(  \langle \rmapgtr{1}{j}{\rep{X}},  \rmapgtr{1}{j'}{\rep{X}} \rangle \bigr)_{j, j' = 1,\ldots,r_1},
\]
which is invertible since the ranks of $\rep X$ are minimal.
This means that $X^{\{\beta\}}_1G_1^{1/2}, X^{\{\beta'\}}_1G_1^{1/2}$ are pairwise orthogonal. Thus using \eqref{eq:rotmps}, replacing $X_1$ by $X_1 G_1^{1/2}$ and $X_2$ by $G_1^{-1/2}X_2$, we can ensure that $\langle X^{\{\beta\}}_1, X^{\{\beta'\}}_1 \rangle = 0$ if $ \lambda_{1,\beta} \neq  \lambda_{1,\beta'}$. By minimality of ranks, there exist precisely $r_1$ different $\beta_1,\ldots,\beta_{r_1} \in \{ 0,\ldots,n_1-1\}$ such that $X^{\{\beta_1\}}_1,\ldots, X^{\{\beta_{r_1}\}}_1$ are linearly independent. 
For $\mu \in \cK_{\lambda, 1} = \{ \lambda_{1,\beta} \colon \beta = 0,\ldots,n_1-1\}$, we now define
\[
  \cS_{1,\mu} = \{  j \colon \lambda_{1,\beta_j} = \mu  \}.
\]
Again making use of \eqref{eq:rotmps}, by Householder reflectors we can construct an orthogonal transformation $Q_1 \in \mathrm{O}(r_1)$ such that replacing $X_1$ by $X_1 Q_1$ and $X_2$ by $Q_1^\top X_2$, we have
\[
X_1^{\{ \beta\}} \in \Span \bigl\{ e^{j}\in\mathbb{R}^{r_1}\colon  j\in \mathcal{S}_{1,\lambda_{1,\beta}}    \bigr\}\text{ for $\beta =0,\ldots,n_1-1$.}
\]
This means that
 $X_1^{[1,j]} \in \Span \{ e^{\beta} \colon  \lambda_{1,\beta} = \mu \}$ for $j \in \cS_{1,\mu}$.
Noting that the eigenspaces of $L_1$ are given by $\Span\{ e^{\beta'} \colon \text{$\beta' = 0,\ldots,n_1-1$ with $\mu  =  \lambda_{1,\beta'}$} \}$ for $\mu \in \cK_{\lambda, 1}$, as well as $X_1^{[1,j]} =  \rmapleq{1}{j}{\rep X}$, we thus have
\[
L_1  \rmapleq{1}{j}{\rep X}  = \mu  \rmapleq{1}{j}{\rep X} \;\text{ for $\mu \in \cK_{\lambda,1}$, $j \in \mathcal{S}_{1,\mu}$}.
\]
This shows \eqref{blocki2} and the first statement in \eqref{evpi2} for $k=1$. 
Moreover, combining
\[
 \ten L \ten x = 
  \sum_{\mu \in \cK_{\lambda, 1}} \sum_{j \in \mathcal{S}_{1,\mu}} \bigl(\mu \rmapleq{1}{j}{\rep X} \otimes \rmapgtr{1}{j}{\rep{X}} + \rmapleq{1}{j}{\rep X} \otimes \opgtr{\ten L}{1} \rmapgtr{1}{j}{\rep{X}} \bigr)
\]
and $\ten L\ten x = \lambda \ten x$ we obtain
\[
  \sum_{\mu \in \cK_{\lambda, 1}} \sum_{j \in \mathcal{S}_{1,\mu}} \rmapleq{1}{j}{\rep X}  \otimes \opgtr{\ten L}{1} \rmapgtr{1}{j}{\rep{X}}
   =  \sum_{\mu \in \cK_{\lambda, 1}}  (\lambda - \mu) \sum_{j \in \mathcal{S}_{1,\mu}} \rmapleq{1}{j}{\rep X}  \otimes  \rmapgtr{1}{j}{\rep{X}}.
\]
Since $\rmapleq{1}{j}{\rep X} $, $j = 1,\ldots, r_1$, are linearly independent by our assumption of minimal ranks, the second statement in \eqref{evpi2} for $k=1$ follows.

Suppose we have sets $\mathcal{S}_{k,\mu}$ with $\mu\in \cK_{\lambda,k}$ such that \eqref{evpi2} and \eqref{blocki2} hold for some $k$ with $1 \leq k < K-1$, where $0 \leq \mu \leq \lambda$ by construction. Then
\begin{equation}\label{eq:eigenK}
\opgtr{\ten{L}}{k} \rmapgtr{k}{j}{\rep{X}}  = (\lambda-\mu) \rmapgtr{k}{j}{\rep{X}}  \quad\text{for all $\mu\in \cK_{\lambda,k}$ and all $j \in \mathcal{S}_{k,\mu}$.}
\end{equation}
For $\beta = 0,\ldots, n_{k+1}-1$, let
\[
\ten{y}^\beta_{k,j} = \bigl(  \rmapgtr{k}{j}{\rep{X}}_{\beta, \hat \alpha} \bigr)_{\hat \alpha \in \mathcal{N}_{k+2}^{K}}, \quad j = 1,\ldots, r_{k}.
\]
For each $\mu\in \cK_{\lambda,k}$, we then have
\[
\opgtr{\ten{L}}{k+1} \ten{y}^\beta_{k,j} = (\lambda-\mu - \lambda_{k+1,\beta})  \ten{y}^\beta_{k,j},        
\qquad 
j \in \cS_{k,\mu}.
\]
By the orthogonality of eigenvectors corresponding to distinct eigenvalues, this implies
\begin{equation}\label{eq:orthcond2}
\langle  \ten{y}^\beta_{k,j}, \ten{y}^{\beta'}_{k,j'} \rangle = 0 \quad \text{for $ j \in \mathcal{S}_{k,\mu}, \, j' \in \mathcal{S}_{k,\mu'}$ with ${\mu + \lambda_{k+1,\beta}\neq \mu' + \lambda_{k+1,\beta'}}
$.}
\end{equation}
We write $(X_{k+1}^{\{\beta\}})_j$ for the $j$-th row of $X_{k+1}^{\{\beta\}}$, $1\leq k < K-1$. For $\mu \in \mathcal{K}_{\lambda,k+1}$, we define
\[
\mathcal{Z}_{k+1,\mu}:= \operatorname{span} \, \bigcup_{\beta=0}^{n_{k+1}-1} \Bigl\{ (X_{k+1}^{\{\beta\}})_j \colon j \in \mathcal{S}_{k,\mu-\lambda_{k{+1},\beta}}   \Bigr\}  ,
\]
where for each $k$, we set $\mathcal{S}_{k,\mu} = \emptyset$ for $\mu \notin \mathcal{K}_{\lambda,k}$.
Since $k < K-1$, we have $ \ten{y}^\beta_{k,j} =  (X_{k+1}^{\{\beta\}})_j X_{k+2} \SKP \cdots \SKP X_K $. Under the conditions in \eqref{eq:orthcond2}, we thus have
\begin{equation*}
\langle G_{k+1} (X^{\{\beta\}}_{k+1})_j , (X^{\{\beta'\}}_{k+1})_{j'} \rangle  = 0, \quad G_{k+1} = \Bigl(  \langle \rmapgtr{k+1}{\ell}{\rep{X}} ,  \rmapgtr{k+1}{\ell'}{\rep{X}} \rangle   \Bigr)_{\ell,\ell' = 1, \ldots, r_{k+1}} ,
\end{equation*}
where again, $G_{k+1}$ is invertible since the ranks of $\rep X$ are minimal.
Consequently, for $z\in \mathcal{Z}_{k+1,\mu}$, $z' \in \mathcal{Z}_{k+1,\mu'}$ and $\mu \neq \mu'$, \eqref{eq:orthcond2} means that $\langle G_{k+1} z, z'\rangle = 0$. Again, there exists an orthogonal transformation $Q_{k+1}\in \mathrm{O}(r_{k+1})$ such that by replacing $X_{k+1}$ by $X_{k+1}G_{k+1}^{1/2}Q_{k+1}$ and $X_{k+2}$ by $Q_{k+1}^TG_{k+1}^{-1/2}X_{k+2}$ as before, we can ensure pairwise orthogonality of the spaces $\mathcal{Z}_{k+1,\mu}$, $\mu \in \mathcal{K}_{\lambda,k+1}$, in the Euclidean inner product. Additionally, for $\mu \in \cK_{\lambda,k+1}$, we can define subsets $\cS_{k+1,\mu}$ that form a partition of {$\{1, \ldots, r_{k+1}\}$} with $\# \mathcal{S}_{k+1,\mu} = \dim \mathcal{Z}_{k+1,\mu}$, such that for all $z \in \mathcal{Z}_{k+1,\mu}$ we have $\supp(z) \subseteq  \mathcal{S}_{k+1,\mu}$. For $k < K-1$, this implies the block structure \eqref{blocki2} for $X_{k+1}$, and $\opleq{\ten{L}}{k+1}  \rmapleq{k+1}{j}{\rep{X}} =   \mu \rmapleq{k+1}{j}{\rep{X}}$ for $j \in \cS_{k+1,\mu}$, which is the first statement in \eqref{evpi2} for $k+1$, holds by construction. Thus we also have
\[
    \sum_{\mu \in \cK_{\lambda, k+1}} \sum_{j \in \cS_{k+1,\mu}} \rmapleq{k+1}{j}{\rep{X}} \otimes \opgtr{\ten{L}}{k+1} \rmapgtr{k+1}{j}{\rep{X}} =     \sum_{\mu \in \cK_{\lambda, k+1}}  (\lambda-\mu)\sum_{j \in \cS_{k+1,\mu}}  \rmapleq{k+1}{j}{\rep{X}} \otimes \rmapgtr{k+1}{j}{\rep{X}},
\]
which by minimality of ranks yields the second statement in \eqref{evpi2} for $k+1$.
By induction, the statement thus follows for all $k \leq K-1$. 

Finally, for $k=K-1$, if $\mu \in \cK_{\lambda,K-1}$, then $\lambda - \mu = \lambda_{K,\beta}$ for some $\beta \in \{ 0,\ldots,n_K-1\}$, and \eqref{eq:eigenK} becomes
\begin{equation}\label{eq:eigenLast}
  L_K X_K^{[j, 1]} = (\lambda - \mu) X_K^{[j, 1]} \quad\text{for $\mu\in \cK_{\lambda,K-1}$, $j \in \mathcal{S}_{K-1,\mu}$,}
\end{equation}
noting that $r_K = 1$. As $L_K$ is a diagonal matrix, the eigenspaces in \eqref{eq:eigenLast} are given by 
\begin{equation}\label{eq:eigenspacesLast}
   \operatorname{span}\{ e^{\beta'} \colon \text{$\beta' = 0,\ldots,n_K-1$ with $\mu  = \lambda -  \lambda_{K,\beta'}$} \}, \quad \mu \in \cK_{\lambda, K-1}.
 \end{equation}
For $\beta \in \{ 0,\ldots,n_K-1 \}$ with $\mu = \lambda - \lambda_{K,\beta}$ let $j' \in \{ 1, \ldots, r_{K-1} \}$. If $j' \notin \cS_{K-1,\mu}$, then $j' \in \cS_{K-1,\mu'}$ for some $\mu' \neq \lambda- \lambda_{K,\beta}$, and $X_K^{[j',1]}$ is orthogonal to $X_K^{[j,1]}$ for all $j \in \cS_{K-1,\mu}$, which by \eqref{eq:eigenspacesLast} implies $X_K(j,\beta,1) = 0$. Thus also $X_K$ satisfies \eqref{blocki2}.

Conversely, suppose now that $\ten{x} \in \mathbb{R}^{n_1\times\cdots\times n_K}$, $\ten{x}\neq 0$, has the block structure \eqref{evpi2}, \eqref{blocki2}. This means that expanding the representation $\rep{X}$ in terms of elementary tensors of order $K$, each of the resulting summands is an eigenfunction of $\ten{L}$ with eigenvalue $\lambda$, and thus the same holds for $\ten{x}$.	
\end{proof}

	\begin{remark}
		{The above proof uses the explicit tensor network structure of an MPS. Analogous block sparsity results can be derived for other tree tensor network representations of tensors that are eigenvectors of an operator with the structure \eqref{eq:Ldef}. 
		Similar use of block sparsity for enforcing physical symmetries has also been considered in \cite{bauer2011implementing,SPV11} for tensor networks without tree structure such as PEPS \cite{verstraete_renormalization_2004} and MERA 
		\cite{VidalMERA}. In such cases, however, whether any tensor with a given symmetry necessarily has a representation with a particular block-sparsity pattern is not clear in this case. In other words, in the absence of tree structure we cannot establish an equivalence between membership in certain eigenspaces and representability with block structure as in Theorem \ref{thm:generalblockstructure}. Block sparsity can then still be used for obtaining approximations, see \cite[Sec.\ III.C]{bauer2011implementing}.}
	\end{remark}

We state the specific result for the particle number operator $\ten{P}$ as a corollary. This corresponds to the special case $n_k = 2$ and $\lambda = N$. For $k=1,\ldots,K$, we have $\lambda_{k,0} = 0$ and $\lambda_{k,1} = 1$, and hence $\lambda_{k,\alpha}^\sst{\leq }, \lambda_{k,\alpha}^\sst{>}\in \{ 0, \ldots, N\}$  in Theorem~\ref{thm:generalblockstructure}.
\begin{cor}\label{cor:blockstructure}
	Let $\ten{x} \in \cF^K$, $\ten{x}\neq 0$, have the representation $\ten{x} = \rmap{\rep{X}}$ with minimal ranks $\rk{r} = (r_1,\ldots, r_{K-1})$.
	Then for $N=1,\ldots, K$, one has $\ten{P} \ten{x} = N \ten{x}$ precisely when $\rep{X}$ can be chosen such that the following holds:
	for $k=1,\ldots, K$ and for all  
	\begin{equation}\label{eq:Kkdef} n \in \cK_k := \bigl\{ \max\{ 0 , N - K + k \} ,\ldots,  \min\{N, k\} \bigr\} \end{equation}
	there exist $\cS_{k,n} \subseteq \{ 1,\ldots, r_k\}$ 
	such that
	\begin{equation*}%
	\opleq{\ten{P}}{k}  \rmapleq{k}{j}{\rep{X}}   =   n   \rmapleq{k}{j}{\rep{X}}, \quad    \opgtr{\ten{P}}{k} \rmapgtr{k}{j}{\rep{X}} = (N-n) \rmapgtr{k}{j}{\rep{X}} ,  \quad j \in \cS_{k,n}     ,
	\end{equation*}
	and the matrices $X^{\{\beta\}}_{k}$, $\beta = 0,1$, have nonzero entries only in the blocks 
	\begin{equation*}%
	\begin{aligned} 
	& X^{\{0\}}_{k}\big|_{\cS_{k-1,n} \times \cS_{k,n}} & & \text{for $n \in \cK_{k-1}\cap \cK_{k}$},  \\ 
	& X^{\{1\}}_{k}\big|_{\cS_{k-1,n} \times \cS_{k,n+1}} & &\text{for $n \in \cK_{k-1} \cap (\cK_{k}-1)$} ,
	\end{aligned}
	\end{equation*}
	where we set $\cS_{0,0} = \cS_{K,N} = \{1\}$.
\end{cor}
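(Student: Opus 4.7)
The plan is to obtain the corollary as a direct specialization of Theorem~\ref{thm:generalblockstructure}. Taking $n_k = 2$ for every $k$ and choosing $L_k = A^*A = \diag(0,1)$ turns the Laplace-like operator in \eqref{eq:Ldef} into the particle number operator $\ten{P}$, and the truncated operators $\opleq{\ten{L}}{k}$, $\opgtr{\ten{L}}{k}$ coincide with $\opleq{\ten{P}}{k}$, $\opgtr{\ten{P}}{k}$ as defined in \eqref{eq:partnumtrunc}. Setting $\lambda = N$, the hypothesis $\ten{L}\ten{x} = \lambda \ten{x}$ becomes the particle number eigenvalue equation in the corollary, and by the ``precisely when'' formulation of Theorem~\ref{thm:generalblockstructure} it suffices to rewrite the conclusions in the present notation.

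The substantive step is to identify the index set $\cK_{\lambda,k}$ from Theorem~\ref{thm:generalblockstructure} with the set $\cK_k$ defined in \eqref{eq:Kkdef}. Since $\lambda_{\ell,\alpha_\ell} = \alpha_\ell \in \{0,1\}$, the partial sum $\sum_{\ell=1}^{k} \lambda_{\ell,\alpha_\ell}$ equals the number of ones among $\alpha_1,\ldots,\alpha_k$, taken over $\alpha \in I_N = \{\alpha \in \{0,1\}^K : \sum_\ell \alpha_\ell = N\}$. I expect to show that this partial sum $n$ can take exactly those integer values satisfying $0 \leq n \leq k$ and $0 \leq N-n \leq K-k$ (the latter being forced because the remaining $K-k$ positions contribute between $0$ and $K-k$ ones and must account for $N-n$). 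Combining these bounds gives $n \in \{\max\{0,N-K+k\},\ldots,\min\{N,k\}\} = \cK_k$, and conversely every such $n$ is realized by some $\alpha \in I_N$ by an explicit choice of occupied positions.

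With this identification, the eigenvalue equations for $\opleq{\ten{P}}{k} \rmapleq{k}{j}{\rep{X}}$ and $\opgtr{\ten{P}}{k} \rmapgtr{k}{j}{\rep{X}}$ transcribe verbatim from Theorem~\ref{thm:generalblockstructure}. For the block-sparsity statement, since $\lambda_{k,\beta} \in \{0,1\}$, the single condition \eqref{blocki2} splits into two cases: for $\beta = 0$, $\mu \in \cK_{k-1} \cap (\cK_k - 0) = \cK_{k-1} \cap \cK_k$ with block $\cS_{k-1,\mu} \times \cS_{k,\mu}$; for $\beta = 1$, $\mu \in \cK_{k-1} \cap (\cK_k - 1)$ with block $\cS_{k-1,\mu} \times \cS_{k,\mu+1}$. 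The boundary conventions $\cS_{0,0} = \cS_{K,N} = \{1\}$ are inherited directly from Theorem~\ref{thm:generalblockstructure}.

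Since the corollary is a pure specialization without additional content, I do not anticipate any serious obstacle. The only calculation that demands care is the combinatorial verification that the attainable partial occupation numbers, taken over $\alpha \in I_N$, fill out the full interval $\cK_k$; this is elementary but is the one place where the structure specific to $\ten{P}$ (as opposed to an arbitrary Laplace-like operator) enters.
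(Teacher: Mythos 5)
Your proof is correct and takes essentially the same approach as the paper, which obtains the corollary as the direct specialization $n_k = 2$, $L_k = A^*A$, $\lambda = N$ of Theorem~\ref{thm:generalblockstructure}. Your explicit combinatorial check that the attainable partial occupation numbers fill out exactly the interval $\cK_k$ in \eqref{eq:Kkdef} is a detail the paper leaves implicit, but it is precisely the intended identification.
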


{The block structure described by Corollary \ref{cor:blockstructure} is equivalent to the one used in physics literature (see, e.g., \cite{McCulloch:07,Schollwoeck:11,SPV11}), where it is usually stated differently in terms of quantum numbers and derived via $U(1)$ symmetry of operators.
For other symmetries considered in physics, such as $SU(2)$, the linear algebraic structure is different and therefore cannot be described by Laplace-like operators \eqref{eq:Ldef} as done above.}

In addition to the the fermionic particle number operator $\ten{P}$, there is a variety of other settings where the structure described by Theorem~\ref{thm:generalblockstructure} can be used.

\begin{example}
  In quantum chemistry, not only the particle number is conserved, but also the numbers of spin-up and spin-down particles. So the MPS is an eigenvector of two  associated Laplace-like operators $\ten{P}_\mathrm{up}$ and $\ten{P}_\mathrm{down}$. For both cases we have $n_k = 2$ and $\lambda_{k,1} = 1$ if $k$ even/odd for the up/down operator and  $\lambda_{k,1} = 0$ otherwise.
  {We then have  partial eigenvalues $\mathcal K_k^{\mathrm{up}}$ and $K_k^{\mathrm{down}}$  and the blocks depend on two partial eigenvalues, i.e., we have sets $\mathcal S_{k,n_1,n_2}\subseteq \{1,\ldots,r_k\}$ for $n_1 \in \mathcal K_k^{\mathrm{up}}$ and $n_2 \in K_k^{\mathrm{down}}$.  The blocks can then be ordered such that the blocks from the first operator have block structure  themselves.}
 
  Alternatively, one can introduce {\em spatial orbitals} that can carry one spin-up and one spin-down electron \cite{Szalay2015}. In this case, all dimensions are $n_k = 4$ and a similar block structure can be derived that also takes into account the antisymmetry of the particles.
\end{example}	
	
\begin{example}
Another example is the bosonic particle number operator with $n_k = n$ and $\lambda_{k,\alpha_k} = \alpha_k$. Tensor trains have frequently been applied in the parametrization of elements of high-dimensional polynomial spaces such as
\[
V_n^K = \left \{\sum_{\alpha\in \{0, \ldots, n-1\}^K} c_\alpha \prod_{k =1}^{K}x_k^{\alpha_k}\right\}\,,
\]	
see, e.g., \cite{oster_approximating_2020,dolgov_tensor_nodate,BCD,EPS}.
In this context, the bosonic particle number operator can be seen as a \emph{polynomial degree operator}. That is, if a polynomial is a linear combination of homogeneous polynomials with the same degree, its coefficient vector is an eigenvector of the polynomial degree operator with the eigenvalue equal to the degree. In $V_n^K$ the degree is precisely the cardinality of the multi-index $\alpha$.
	 
Another interesting example is the case $n_k = n$ and $\lambda_{k,\alpha_k} = 1$, $\alpha_k > 0$, which in the context of polynomials with $V_n^K$ as above measures the number of variables in a polynomial. This means eigenvectors of this operator are associated with coefficient vectors where the multi-index $\alpha$ is nonzero only for a fixed number (the associated eigenvalue) of variables.
\end{example}	

We define the block sizes $\rho_{k,\mu} := \# \cS_{k,\mu} $, where $\sum_{\mu \in \cK_{\lambda,k}} \rho_{k,\mu} = r_k$, and derive the following upper bounds.
\begin{lemma}\label{lmm:blockranks}
	Let $\ten{x} \in \mathbb{R}^{n_1\times\cdots\times n_K}$, $\ten{x}\neq 0$, have the representation $\ten{x} = \rmap{\rep{X}}$ with minimal ranks $\rk{r} = (r_1,\ldots, r_{K-1})$ and $\ten{L} \ten{x} = \lambda \ten{x}$. Furthermore, let $E_{\mu,k}^\sst{\leq}$ and $E_{\lambda-\mu,k}^\sst{>}$ be the eigenspaces of $\opleq{\ten{L}}{k}$ and $\opgtr{\ten{L}}{k}$  of eigenvalues $\mu$ and $\lambda-\mu$, respectively. Then for $k=1,\ldots, K$ and $\mu \in \cK_{\lambda,k}$, we have
	\[
	\rho_{k,\mu} \leq \min\bigl\{ \dim E_{\mu,k}^\sst{\leq},\dim E_{\lambda-\mu,k}^\sst{>} \bigr\} \,.
	\]
\end{lemma}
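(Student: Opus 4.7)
The plan is very short, because Theorem~\ref{thm:generalblockstructure} essentially does all the work: by \eqref{evpi2}, for every $j \in \cS_{k,\mu}$ the vector $\rmapleq{k}{j}{\rep X}$ lies in the eigenspace $E_{\mu,k}^\sst{\leq}$ of $\opleq{\ten L}{k}$, and the vector $\rmapgtr{k}{j}{\rep X}$ lies in the eigenspace $E_{\lambda-\mu,k}^\sst{>}$ of $\opgtr{\ten L}{k}$. So once one exhibits linear independence of these two families indexed by $j \in \cS_{k,\mu}$, the cardinality bound $\rho_{k,\mu} = \#\cS_{k,\mu}$ against the respective dimensions is immediate.

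The key input is the minimality of the ranks $\rk r$. By the standard characterization of TT-ranks as the ranks of the $k$-th matricization of $\ten x$, minimality of $r_k$ implies that the full family $\{\rmapleq{k}{j}{\rep X}\}_{j=1,\ldots,r_k}$ is linearly independent (it gives a basis of the column span of the $k$-th unfolding), and analogously for $\{\rmapgtr{k}{j}{\rep X}\}_{j=1,\ldots,r_k}$ (a basis of the row span). Restricting to the subset of indices $j \in \cS_{k,\mu}$ preserves linear independence, so one obtains $\rho_{k,\mu}$ linearly independent vectors sitting inside $E_{\mu,k}^\sst{\leq}$ and $E_{\lambda-\mu,k}^\sst{>}$ respectively.

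Taking the minimum of the two resulting dimension bounds yields
\[
  \rho_{k,\mu} \leq \min\bigl\{ \dim E_{\mu,k}^\sst{\leq},\ \dim E_{\lambda-\mu,k}^\sst{>} \bigr\},
\]
which is the claim. There is no real obstacle; the only point to flag is that the conclusion relies on the representation $\rep X$ being chosen as in Theorem~\ref{thm:generalblockstructure}, since minimality of ranks alone does not pin down the block structure, and the statement of the lemma implicitly refers to the $\cS_{k,\mu}$ supplied by that theorem.
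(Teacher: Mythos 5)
Your proposal is correct and follows essentially the same route as the paper: invoke \eqref{evpi2} from Theorem~\ref{thm:generalblockstructure} to place the partial tensors $\rmapleq{k}{j}{\rep X}$ and $\rmapgtr{k}{j}{\rep X}$, $j \in \cS_{k,\mu}$, in the respective eigenspaces, then use minimality of the ranks to conclude these families are linearly independent, giving both dimension bounds. The paper phrases the linear-independence step as a contradiction with rank minimality rather than via the matricization-rank characterization, but this is the same argument, and your closing caveat about the $\cS_{k,\mu}$ coming from the block-structured choice of $\rep X$ is consistent with how the paper uses the lemma.
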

\begin{proof}
	With Theorem~\ref{thm:generalblockstructure}, for $j \in \cS_{k,\mu}$, we obtain $\opleq{\ten{L}}{k}  \rmapleq{k}{j}{\rep{X}}   =   \mu   \rmapleq{k}{j}{\rep{X}}$ and $\opgtr{\ten{L}}{k} \rmapgtr{k}{j}{\rep{X}} = (\lambda-\mu) \rmapgtr{k}{j}{\rep{X}}$.
	The partial tensors $\rmapleq{k}{j}{\rep{X}}$ are linearly independent because if they were not, we could reduce the rank $r_k$, which is assumed to be minimal. Therefore $\rho_{k,\mu}$ has to be smaller or equal to the dimension of the eigenspace of the operator $\opleq{\ten{L}}{k}$ to the eigenvalue $\mu$. %
	Analogously, we look at the eigenspace of the operator $\opgtr{\ten{L}}{k}$ to the eigenvalue $\lambda-\mu$.%
\end{proof}
Note that for the particle number operator $\ten{P}$, where $\rho_{k,n} = \# \cS_{k,n} $ for $n \in \cK_k$,
\[
\dim E_{n,k}^\sst{\leq} = {k \choose n}, \quad n \in\{0\ldots,N\}.
\]

As a final result in this chapter, we show that Laplace-like operators commute with the tangent space projection and thus, they share the same eigenvectors.
  \begin{cor}\label{cor:tangprok}
 	For $\ten{x}\in\mathbb{R}^{n_1\times\cdots\times n_K}$ and $\ten{L}\ten{x} = \lambda \ten{x}$,
 	we have $\ten{Q}_{\ten{x}}\ten{L} = \ten{L}\ten{Q}_{\ten{x}}$.
 \end{cor}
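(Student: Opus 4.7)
The plan is to prove $\ten Q_{\ten x} \ten L = \ten L \ten Q_{\ten x}$ summandwise by showing that each $\ten Q_{\ten x}^{k,i}$ commutes with $\ten L$; the full statement then follows by linearity. The two ingredients I will use are Theorem~\ref{thm:generalblockstructure} together with the self-adjointness of the truncated Laplace-like operators $\opleq{\ten L}{k}$ and $\opgtr{\ten L}{k}$, combined with the additive decomposition of $\ten L$ along any mode cut.

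First I will observe that the linear span $V_{\leq k} := \operatorname{span}\{\rmapleq{k}{j}{\rep U} \colon j = 1,\ldots, r_k\}$ is the column space of the $k$-th matricization of $\ten x$ and therefore depends only on $\ten x$, not on the specific left-orthogonal representation $\rep U$; the analogous statement holds for $V_{>k} := \operatorname{span}\{\rmapgtr{k}{j}{\rep V} \colon j = 1,\ldots, r_k\}$. Since $\ten L \ten x = \lambda \ten x$, Theorem~\ref{thm:generalblockstructure} supplies a representation in which every $\rmapleq{k}{j}{\rep U}$ is an eigenvector of $\opleq{\ten L}{k}$ and every $\rmapgtr{k}{j}{\rep V}$ is an eigenvector of $\opgtr{\ten L}{k}$. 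Hence $V_{\leq k}$ and $V_{>k}$ are invariant subspaces of the respective self-adjoint truncated operators, and the corresponding orthogonal projections $P_{\leq k}$ and $P_{>k}$ commute with $\opleq{\ten L}{k}$ and $\opgtr{\ten L}{k}$, respectively.

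Next I will couple this with the mode-cut decompositions
\[
\ten L = \opleq{\ten L}{k} \otimes I + I \otimes \opgtr{\ten L}{k}
\]
for the two-way split $\{1,\ldots,k\}\mid\{k+1,\ldots,K\}$, and
\[
\ten L = \opleq{\ten L}{k-1} \otimes I \otimes I + I \otimes L_k \otimes I + I \otimes I \otimes \opgtr{\ten L}{k}
\]
for the three-way split $\{1,\ldots,k-1\}\mid\{k\}\mid\{k+1,\ldots,K\}$. With the identifications $\ten Q_{\ten x}^{k,2} = P_{\leq k} \otimes P_{>k}$ and $\ten Q_{\ten x}^{k,1} = P_{\leq k-1} \otimes I \otimes P_{>k}$ relative to these partitions, the identity $(A \otimes B)(C \otimes D) = AC \otimes BD$ together with the commutation relations established above (and the trivial commutation of $I$ with $L_k$) delivers $\ten Q_{\ten x}^{k,i}\, \ten L = \ten L\, \ten Q_{\ten x}^{k,i}$ for every $k$ and $i$. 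Summing over $k$ and $i$ completes the argument.

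The main obstacle I anticipate is aligning the partial representation mappings appearing in the definition of $\ten Q_{\ten x}$ (which a priori reference specific orthogonal representations $\rep U$ and $\rep V$) with representation-independent orthogonal projections onto matricization subspaces of $\ten x$ alone. Once this geometric identification is in place, Theorem~\ref{thm:generalblockstructure} immediately supplies the required eigenspace invariance, and the remaining tensor-product bookkeeping is purely mechanical.
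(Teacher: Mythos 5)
Your proof is correct, and it reaches the conclusion by a genuinely different route than the paper. The paper's proof is a weak-form verification: since the elementary tensors $\rmap{\rep{e}^\alpha}$ are eigenvectors of the self-adjoint operator $\ten{L}$, commutation reduces to showing $\langle \ten{Q}_{\ten{x}}\rmap{\rep{e}^\beta}, \rmap{\rep{e}^\alpha}\rangle = 0$ whenever $\lambda_\alpha \neq \lambda_\beta$, i.e.\ that $\ten{Q}_{\ten{x}}$ does not couple distinct eigenspaces of $\ten{L}$; this is then checked summand by summand, using Theorem~\ref{thm:generalblockstructure} and the disjointness of the sets $\cS_{k-1,\mu}$ to force every product $\langle \rmapless{k}{j}{\rep{U}}, \rmapless{k}{1}{\rep{e}^\alpha}\rangle \langle \rmapless{k}{j}{\rep{U}}, \rmapless{k}{1}{\rep{e}^\beta}\rangle$ to vanish. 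You instead prove operator-level commutation of each $\ten Q_{\ten x}^{k,i}$ directly, from invariance of the matricization subspaces under the self-adjoint truncated operators $\opleq{\ten{L}}{k}$, $\opgtr{\ten{L}}{k}$ combined with the additive Kronecker splitting of $\ten{L}$ across the mode cut. Both arguments pivot on Theorem~\ref{thm:generalblockstructure}, but yours buys something the paper leaves implicit: the representations $\rep U$, $\rep V$ entering $\ten{Q}_{\ten{x}}$ are not themselves assumed to be in the block-sparse form, and your representation-independence observation (span of the partial maps equals the column/row space of the matricization, valid because all ranks in play are minimal) is exactly what licenses transferring the eigenvector property from the representation supplied by the theorem to $\rep U$ and $\rep V$ --- the paper applies the block structure to $\rep U$ without comment. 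Conversely, the paper's computation is more elementary and makes visible precisely where block disjointness enters. One point you should make explicit: you need the sets $\cS_{k,\mu}$, $\mu \in \cK_{\lambda,k}$, to exhaust $\{1,\ldots,r_k\}$, so that \emph{every} $\rmapleq{k}{j}{\rep{X}}$ is an eigenvector of $\opleq{\ten{L}}{k}$; this holds because an index lying in no $\cS_{k,\mu}$ would index a zero column of $X_k$ and a zero row of $X_{k+1}$, contradicting rank minimality, and it is how these sets are constructed (as a partition) in the paper's proof of the theorem.
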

 \begin{proof}
 	We apply Theorem~\ref{thm:generalblockstructure}. For two arbitrary but fixed multi-indices $\alpha,\beta \in \cN$ let $\rep{e}^\alpha = (e^{\alpha_1},\ldots,e^{\alpha_K})$ and $\rep{e}^\beta = (e^{\beta_1},\ldots,e^{\beta_K})$ be two representations such that $\rmap{\rep{e}^\alpha}$ and $\rmap{\rep{e}^\beta}$ are unit vectors in $\mathbb{R}^{n_1\times\cdots\times n_K}$. Then it suffices to show that 
 	\[
 	 \langle \ten{Q}_{\ten{x}}\ten{L}\rmap{\rep{e}^\beta}, \rmap{\rep{e}^\alpha} \rangle  = \langle \ten{L}\ten{Q}_{\ten{x}}\rmap{\rep{e}^\beta}, \rmap{\rep{e}^\alpha} \rangle ,
 	\]
 	and since $\rmap{\rep{e}^\alpha}$ and $\rmap{\rep{e}^\beta}$ are eigenvectors of $\ten{L}$ with eigenvalue $\lambda_\alpha$ and $\lambda_\beta$, this simplifies to showing that for $\lambda_\alpha \neq \lambda_\beta$ we have
$
 	 \langle \ten{Q}_{\ten{x}}\rmap{\rep{e}^\beta}, \rmap{\rep{e}^\alpha} \rangle = 0
$.
 	Now for $k=1,\ldots,K$, we show that 
 	\[
 	 \langle \ten Q_{\ten x}^{k,i} 
 	\rmap{\rep{e}^\beta},  \rmap{\rep{e}^\alpha} \rangle = 0, \qquad i = 1,2.
 	\] 
 	We give the proof for $i=1$, the case $i=2$ can be treated analogously.
 	Note that if $\lambda^\alpha \neq \lambda^\beta$, we can assume without loss of generality that $\rmapless{k}{1}{\rep{e}^\alpha}$ and $\rmapless{k}{1}{\rep{e}^\beta}$ are eigenvectors of different eigenvalues $\lambda_{k-1,\alpha}^\sst{\leq}\neq \lambda_{k-1,\beta}^\sst{\leq}$ of $\opleq{\ten{L}}{k-1}$.
 	But since $\ten{x}$ is also an eigenvalue  of $\ten{L}$ it has the properties shown in Theorem~\ref{thm:generalblockstructure}, and consequently,
 	\[
 	\langle \rmapless{k}{j_1}{\rep{U}}, \rmapless{k}{1}{\rep{e}^\alpha} \rangle = 0 \text{ for }j_1\notin\cS_{k-1,\lambda_{k-1,\alpha}^{\leq}} \text{ and } \langle \rmapless{k}{j_2}{\rep{U}}, \rmapless{k}{1}{\rep{e}^\beta}  \rangle = 0 \text{ for }j_2\notin\cS_{k-1,\lambda_{k-1,\beta}^{\leq}}.
 	\]
 	As $\cS_{k-1,\lambda_{k-1,\alpha}^\sst{\leq}}\cap\cS_{k-1,\lambda_{k-1,\beta}^\sst{\leq}} = \emptyset$, this implies
 	\[
 	\langle \rmapless{k}{j}{\rep{U}}, \rmapless{k}{1}{\rep{e}^\alpha}\rangle \langle \rmapless{k}{j}{\rep{U}},\rmapless{k}{1}{\rep{e}^\beta}\rangle  = 0\quad \text{for all } j\in\{1,\ldots, r_{k-1}\},
 	\]
 	concluding the proof. 
 \end{proof}

\section{Basic Operations on Block-Structured Matrix Products States}\label{sec:blockops}

In the remainder of this article, we restrict ourselves again to the case $\ten L = \ten P$ and $\mathcal N = \{0,1\}^K$.
For fixed $N \leq K$, we denote the space of all tensors $\ten x \in \cF^K$ with $\ten P \ten x = N \ten x$ as 
\[ \cF^K_N := \{ \ten x \in \cF^K : \ten P \ten x = N \ten x \}\]
and we represent them in the block-sparse MPS format. The block structure of the matrix product states leads to more efficient storage and computation, if exploited correctly. Furthermore, a restriction to one of the eigenspaces of the particle number operator eliminates redundancies in iterative minimization schemes. 

We simplify notation by first noting that the sets $\cS_{k,n}$ are disjoint and that they can be ordered arbitrarily due to the invariance of the components. This means that the matrices $X^{\{0\}}_k$ and $X^{\{1\}}_k$ are either block-diagonal or they have blocks only just above or just below the diagonal. We denote the blocks representing an unoccupied $k$-th orbital by 
\begin{equation*}
\unocc{X}_{k,n} = X^{\{0\}}_{k}\big|_{\cS_{k-1,n} \times \cS_{k,n}} \in \R^{\rho_{k-1,n} \times \rho_{k,n}} \qquad \text{for $n \in \cK_{k-1}\cap \cK_{k}$},
\end{equation*}
and those representing an occupied orbital by 
\begin{equation*}
\occ{X}_{k,n} = X^{\{1\}}_{k}\big|_{\cS_{k-1,n} \times \cS_{k,n+1}} \in \R^{\rho_{k-1,n} \times \rho_{k,n+1}} \qquad \text{for $n \in \cK_{k-1} \cap (\cK_{k}-1)$}.
\end{equation*}
For $k$ such that $N < k < K-N+1$, which we refer to as the \emph{generic case}, we have $\cK_{k-1} = \cK_k = \{0,\ldots,N\}$; otherwise, the number of particles to the right and to the left of orbital $k$, and hence the elements of $\cK_{k-1}$ and $\cK_k$, are restricted according to \eqref{eq:Kkdef}. The corresponding block structure according to Corollary \ref{cor:blockstructure} has the form
\begin{equation*}
\setlength\arraycolsep{3pt}
X^{\{0\}}_k = \begin{pmatrix}
\unocc{X}_{k,0} & 0 & \cdots & 0 \\
0 & \unocc{X}_{k,1} & \cdots & 0 \\
\vdots & \vdots & \ddots & \vdots \\
0 & 0 & \cdots & \unocc{X}_{k,N}
\end{pmatrix}	
\qquad \text{and} \qquad
X^{\{1\}}_k = \begin{pmatrix}
0 & \occ{X}_{k,0} & \cdots & 0 \\
\vdots & \vdots & \ddots & \vdots \\
0 & 0 & \cdots & \occ{X}_{k,N-1} \\
0 & 0 & \cdots & 0 
\end{pmatrix}.	
\end{equation*}
Since nonzero blocks for the unoccupied orbital never occur in the same position as the ones for the occupied orbital, the two layers $\alpha = 0$ and $\alpha = 1$ can be summarized in the core representation
\begin{equation*}
\setlength\arraycolsep{3pt}
X_k = \begin{bmatrix}
\unocc{X}_{k,0}^{\uparrow} & \occ{X}_{k,0}^{\uparrow} & \cdots & 0 \\[-3pt] 
0 & \unocc{X}_{k,1}^{\uparrow} & \ddots & \vdots \\[-2pt]
\vdots & \vdots & \ddots & \hspace{4pt} \occ{X}_{k,N-1}^{\uparrow} \\[3pt]
0 & 0 & \cdots & \hspace{-7pt}\unocc{X}_{k,N}^{\uparrow}
\end{bmatrix} =: \bidiag\left(\begin{bmatrix} \unocc{X}_{k,n}^{\uparrow} & \occ{X}_{k,n}^{\uparrow} \end{bmatrix}_{n \in \cK_{k-1}} \right),
\end{equation*}
where each block is composed of vectors, and where we define
\begin{equation*}
\unocc{X}_{k,n}^{\uparrow} = \unocc{X}_{k,n} \uparrow e^0 \in \R^{\rho_{k-1,n} \times 2 \times \rho_{k,n}} \qquad \text{for $n \in \cK_{k-1}\cap \cK_{k}$}
\end{equation*}
and
\begin{equation*}
\occ{X}_{k,n}^{\uparrow} = \occ{X}_{k,n} \uparrow e^1 \in \R^{\rho_{k-1,n} \times 2 \times \rho_{k,n+1}} \qquad \text{for $n \in \cK_{k-1} \cap (\cK_{k}-1)$}.
\end{equation*}
The cases where either $k \leq N$ or $k \geq K-N+1$ have the last rows or first columns (and zero rows and columns) removed, respectively, as illustrated in the following example.
\begin{example}
A tensor $\ten x \in \cF^5_2$ of order $K=5$ and particle number $N=2$, representing $5$ orbitals and $2$ particles, has the form\small
\begin{equation*}
\setlength\arraycolsep{3pt}
\ten x = \begin{bmatrix}
\unocc{X}_{1,0}^{\uparrow} & \occ{X}_{1,0}^{\uparrow}
\end{bmatrix}
\SKP
\begin{bmatrix}
\unocc{X}_{2,0}^{\uparrow} & \occ{X}_{2,0}^{\uparrow} & 0 \\[3pt]
0 & \unocc{X}_{2,1}^{\uparrow} & \occ{X}_{2,1}^{\uparrow}
\end{bmatrix}
\SKP
\begin{bmatrix}
\unocc{X}_{3,0}^{\uparrow} & \occ{X}_{3,0}^{\uparrow} & 0 \\[3pt]
0 & \unocc{X}_{3,1}^{\uparrow} & \occ{X}_{3,1}^{\uparrow} \\[3pt]
0 & 0 & \unocc{X}_{3,2}^{\uparrow} \\
\end{bmatrix}
\SKP
\begin{bmatrix}
\occ{X}_{4,0}^{\uparrow} & 0 \\[3pt]
\unocc{X}_{4,1}^{\uparrow} & \occ{X}_{4,1}^{\uparrow} \\[3pt]
0 & \unocc{X}_{4,2}^{\uparrow}
\end{bmatrix}
\SKP
\begin{bmatrix}
\occ{X}_{5,1}^{\uparrow} \\[3pt]
\unocc{X}_{5,2}^{\uparrow} 
\end{bmatrix}.
\end{equation*}\normalsize
\end{example}

As an eigenspace of a linear operator, $\cF^K_N$ is a linear subspace of $\cF^K$. Addition and scalar multiplication of MPS in this subspace correspondingly  work equivalently to those of regular MPS: Addition of two tensors in block-sparse MPS format is the concatenation of corresponding blocks, scalar multiplication is the multiplication of all blocks in one of its components. We will now explicitly describe some further more involved operations, as well as left- and right-orthogonalization and rank truncation procedures.

Many operations on MPS can be performed either left-to-right or right-to-left, and in general, both versions are required. Here we state right-to-left procedures, as their description is notationally more compact. Apart from the notation, however, all left-to-right procedures are performed analogously.

\begin{algorithm}[t!]
\label{alg:innerprod}
\SetKwInOut{Input}{input}\SetKwInOut{Output}{output}
\Input{Tensors $\ten x= \rmap{\rep X}, \,\ten y = \rmap{\rep Y} \in \cF^K_N$.}
\Output{$\langle \ten x , \ten y \rangle \in \R$.}
\BlankLine
Set $R_N = \unocc{X}_{K,N} \unocc{Y}_{K,N}^{\mathsf T}$ and $R_{N-1} = \occ{X}_{K,N-1} \occ{Y}_{K,N-1}^{\mathsf T}$

\For{$k = K-1,K-2,\ldots,1$}{
\For{$n \in \cK_{k-1}$}{
\If{$n \in \cK_k \cap (\cK_k - 1)$}{
Set $L_n = \unocc{X}_{k,n} R_n \unocc{Y}_{k,n}^{\mathsf T} + \occ{X}_{k,n} R_{n+1} \occ{Y}_{k,n}^{\mathsf T}$
}

\If{$n \in \cK_k \setminus (\cK_k - 1)$}{
Set $L_n = \unocc{X}_{k,n} R_n \unocc{Y}_{k,n}^{\mathsf T}$
}

\If{$n \in (\cK_k - 1) \setminus \cK_k$}{
Set $L_n = \occ{X}_{k,n} R_{n+1} \occ{Y}_{k,n}^{\mathsf T}$
}}
\For{$n \in \cK_{k-1}$}{
Set $R_n \leftarrow L_n$
}}
\Return{$R_0$}
\caption{Inner product in block-sparse MPS format.}
\end{algorithm}

Alg.~\ref{alg:innerprod} describes the scheme for computing the inner product of two elements of $\cF^K_N$ in block-sparse MPS format, each of which may be of arbitrary ranks. The scheme consists of a right-to-left procedure that successively contracts the blocks of corresponding size. {Note that in each step, one ultimately constructs the partial representation mapping $\rmapgtr{k}{j}{\rep{X}}$ where $j \in \cS_{k,n}, n \in \cK_k$. This, and its left-to-right counterpart $\rmapless{k}{j}{\rep{X}}$, is then given in a block-diagonal form, which can be exploited in the construction of subproblems in the DMRG algorithm discussed in Sec.~\ref{sec:itermethods}.}

\begin{algorithm}[t!]
\label{alg:QR}
\SetKwInOut{Input}{input}\SetKwInOut{Output}{output}
\Input{Tensor $\ten x = \rmap{\rep X} \in \cF^K_N$.}
\Output{Right-orthogonal representation $\rep X$.}
\BlankLine
\For{$k = K,K-1,\ldots,2$}{
\For{$n \in \cK_{k-1}$}{
\If{$n \in \cK_k \cap (\cK_k - 1)$}{
Compute QR-factorization $\begin{pmatrix}
\unocc{X}_{k,n}^\mathsf{T} \\ \occ{X}_{k,n}^\mathsf{T}
\end{pmatrix} = Q_n R_n$ with $Q_n = \begin{pmatrix}
Q_{0,n} \\ Q_{1,n}
\end{pmatrix}$

Set $\unocc{X}_{k,n} \leftarrow Q_{0,n}^\mathsf{T}$ and $\occ{X}_{k,n} \leftarrow Q_{1,n}^\mathsf{T}$

Set $\unocc{X}_{k-1,n} \leftarrow \unocc{X}_{k-1,n} R_n^\mathsf{T}$
 
\If{$n-1 \in \cK_{k-2}$}{
Set $\occ{X}_{k-1,n-1} \leftarrow \occ{X}_{k-1,n-1} R_n^\mathsf{T}$
}
}
\If{$n \in \cK_k \setminus (\cK_k - 1)$}{
Compute QR-factorization $\unocc{X}_{k,n}^\mathsf{T} = Q_n R_n$

Set $\unocc{X}_{k,n} \leftarrow Q_n^\mathsf{T}$

Set $\unocc{X}_{k-1,n} \leftarrow \unocc{X}_{k-1,n} R_n^\mathsf{T}$

\If{$n-1 \in \cK_{k-2}$}{
Set $\occ{X}_{k-1,n-1} \leftarrow \occ{X}_{k-1,n-1} R_n^\mathsf{T}$
}
}

\If{$n \in (\cK_k - 1) \setminus \cK_k$}{
Compute QR-factorization $\occ{X}_{k,n}^\mathsf{T} = Q_n R_n$

Set $\occ{X}_{k,n} \gets Q_n^\mathsf{T}$

Set $\unocc{X}_{k-1,n} \leftarrow \unocc{X}_{k-1,n} R_n^\mathsf{T}$

\If{$n-1 \in \cK_{k-2}$}{
Set $\occ{X}_{k-1,n-1} \leftarrow \occ{X}_{k-1,n-1} R_n^\mathsf{T}$
}
}
}
Set $X_k \gets \bidiag\left(\begin{bmatrix} \unocc{X}_{k,n}^{\uparrow} & \occ{X}_{k,n}^{\uparrow} \end{bmatrix}_{n \in \cK_k}\right)$
}
Set $X_1 \gets \bidiag\left(\begin{bmatrix} \unocc{X}_{1,n}^{\uparrow} & \occ{X}_{1,n}^{\uparrow} \end{bmatrix}_{n \in \cK_1}\right)$

\Return{$\rep X = (X_1, \ldots, X_K)$}
\caption{Right-orthogonalization in block-sparse MPS format.}
\end{algorithm}

In Alg.~\ref{alg:QR}, we demonstrate the procedure for orthogonalizing from right to left, resulting in a right-orthogonal tensor. 
The method for bringing the tensor into its right-orthogonal TT-SVD representation, as described in Alg.~\ref{alg:SVD}, follows a similar pattern. Here, the input tensor needs to be given in left-orthogonal format (to which one transforms analogously to Alg.~\ref{alg:QR}), singular value decompositions of joined blocks are computed from right to left, and the singular values in each step are stored. These singular values are used to truncate the ranks of the tensor based on the the estimates in \cite{Oseledets:2011:TT,Grasedyck:2010:HierarchicalSVD}. Alg.~\ref{alg:svdthresh} summarizes this procedure, where the smallest singular values are selected such that they do not exceed the upper bound on the truncation error $\varepsilon$. The rows and columns of the corresponding blocks are then deleted. A similar procedure can be used for truncation to given ranks. Note that if the error threshold $\varepsilon$ is chosen too large, it is possible that the whole tensor is truncated to zero. This means that zero is actually the best low rank approximation to the given tensor. In the present context, we want to avoid this anomaly, since the zero tensor is physically meaningless (we emphasize that it does not represent the vacuum state). As long as $\varepsilon < \norm{\ten x}$ in Alg.~\ref{alg:svdthresh}, this cannot occur.

\begin{algorithm}[t!]
\label{alg:SVD}
\SetKwInOut{Input}{input}\SetKwInOut{Output}{output}
\Input{Left-orthogonal representation block-sparse $\ten x= \rmap{\rep X} \in \cF^K_N$.}
\Output{$\ten x = \rmap{\rep X}$ with block-sparse $\rep X$ in right-orthogonal TT-SVD form.}
\BlankLine
\For{$k = K,K-1,\ldots,2$}{
\For{$n \in \cK_{k-1}$}{
\If{$n \in \cK_k \cap (\cK_k - 1)$}{
Compute SVD $\begin{pmatrix}
\unocc{X}_{k,n} & \occ{X}_{k,n}
\end{pmatrix} = U_n \Sigma_{k-1,n} V_n^\mathsf{T}$ with $V_n = \begin{pmatrix}
V_{0,n} \\ V_{1,n}
\end{pmatrix}$

Set $\unocc{X}_{k,n} \leftarrow V_{0,n}^\mathsf{T}$ and $\occ{X}_{k,n} \leftarrow V_{1,n}^\mathsf{T}$

Set $\unocc{X}_{k-1,n} \leftarrow \unocc{X}_{k-1,n} U_n \Sigma_{k-1,n}$

\If{$n-1 \in \cK_{k-2}$}{
Set $\occ{X}_{k-1,n-1} \leftarrow \occ{X}_{k-1,n-1} U_n \Sigma_{k-1,n}$
}
}
\If{$n \in \cK_k \setminus (\cK_k - 1)$}{
Compute SVD $\unocc{X}_{k,n} = U_n \Sigma_{k-1,n} V_n^\mathsf{T}$

Set $\unocc{X}_{k,n} \leftarrow V_n^\mathsf{T}$

Set $\unocc{X}_{k-1,n} \leftarrow \unocc{X}_{k-1,n} U_n \Sigma_{k-1,n}$

\If{$n-1 \in \cK_{k-2}$}{
Set $\occ{X}_{k-1,n-1} \leftarrow \occ{X}_{k-1,n-1} U_n \Sigma_{k-1,n}$
}
}

\If{$n \in (\cK_k - 1) \setminus \cK_k$}{
Compute SVD $\occ{X}_{k,n}^\mathsf{T} = U_n \Sigma_{k-1,n} V_n^\mathsf{T}$

Set $\occ{X}_{k,n} \gets V_n^\mathsf{T}$

Set $\unocc{X}_{k-1,n} \leftarrow \unocc{X}_{k-1,n} U_n \Sigma_{k-1,n}$

\If{$n-1 \in \cK_{k-2}$}{
Set $\occ{X}_{k-1,n-1} \leftarrow \occ{X}_{k-1,n-1} U_n \Sigma_{k-1,n}$
}
}
}
Set $X_k \gets \bidiag\left(\begin{bmatrix} \unocc{X}_{k,n}^{\uparrow} & \occ{X}_{k,n}^{\uparrow} \end{bmatrix}_{n \in \cK_k}\right)$
}
Set $X_1 \gets \bidiag\left(\begin{bmatrix} \unocc{X}_{1,n}^{\uparrow} & \occ{X}_{1,n}^{\uparrow} \end{bmatrix}_{n \in \cK_1}\right)$

\Return{$\rep X = (X_1,\ldots, X_K)$ and $\left(\Sigma_{k,n}\right)_{\substack{k = 1,\ldots,K-1 \\ n \in \cK_k}}$}
\caption{Right-orthogonal TT-SVD in block-sparse MPS format.}
\end{algorithm}

 \begin{algorithm}[t!]
\label{alg:svdthresh}
\SetKwInOut{Input}{input}\SetKwInOut{Output}{output}
\SetKwInOut{Input}{input} 
\Input{SVD-normalized tensor $\ten x \in \cF^K_N$, singular value matrices $\left(\Sigma_{k,n}\right)_{\substack{k = 1,\ldots,K-1 \\ n \in \cK_k}}$, \vspace{-5pt} \\ error threshold $\varepsilon \in \R_+$. \vspace{5pt}}
\Output{Truncated $\rep Y$ with $\| \ten x - \rmap{{\rep Y}} \| \leq \varepsilon$.}
\BlankLine
Find {$(k_i,n_i)_{i\in I}$ for the} $I$ smallest singular values $\sigma_{k_i,n_i}$ in $\bigl(\diag(\Sigma_{k,n})\bigr)_{\substack{k = 1,\ldots,K-1 \\ n \in \cK_k}}$ {s.t.} \vspace{-5pt}
\[ \sum_{i = 1}^I \sigma_{k_i,n_i}^2 \leq \varepsilon^2 \]
\\[-5pt]
Set $\rep Y = \rep X$

\For{$i = 1,\ldots,I$}{
\If{$n_i \in \cK_{{k_i}-1}$}{
Delete last column of $\unocc{Y}_{k_i,n_i}$
}
\If{$n_i-1 \in \cK_{{k_i}-1}$}{
Delete last column of $\occ{Y}_{k_i,n_i-1}$
}
\If{$n_i \in \cK_{{k_i}+1}$}{
Delete last row of $\unocc{Y}_{k_i+1,n_i}$
}
\If{$n_i \in (\cK_{{k_i}+1} - 1)$}{
Delete last row of $\occ{Y}_{k_i+1,n_i}$
}
}
\For{$k = 1,\ldots,K$}{
Set $Y_k \gets \bidiag\left(\begin{bmatrix} \unocc{Y}_{k,n}^{\uparrow} & \occ{Y}_{k,n}^{\uparrow} \end{bmatrix}_{n \in \cK_k}\right)$
}
\Return{$\rep Y = (Y_1, \ldots, Y_K)$}
\caption{TT-SVD truncation in block-sparse MPS format.}
\end{algorithm}

\begin{remark}[Optimality]
If the standard TT-SVD is unique, for each $k$, it differs from the block-sparse TT-SVD representation produced by Algorithm \ref{alg:SVD} only by the ordering of singular values. In particular, when the entries of the diagonal matrices $\Sigma_{k,n}$, $n \in \cK_k$ are ordered by size, the optimality properties \eqref{eq:ttsvdquasiopt} of the TT-SVD truncation hold also in this setting.
\end{remark}

\begin{remark}[Particle number conservation]\label{rem:ttsvd}
The above remark implies that rank truncation of the TT-SVD is a particle number preserving operation. Non-uniqueness of the TT-SVD can occur only if a matricization has a multiple singular value; in such a case, an arbitrary choice of the TT-SVD in general is not block-sparse, and truncation of the TT-SVD may change the particle number. As illustrated in Section \ref{ssc:rounding}, in cases with singular values that are distinct but close, the associated numerical instability of singular vectors can lead to deviations in the particle number when the numerically computed TT-SVD is truncated, unless the block structure is enforced explicitly.
\end{remark}

\begin{remark}[Number of operations]
Depending on the relation between block sizes and total MPS ranks, the block-sparse representation can allow for a substantial reduction of computational costs. As an example, we consider the TT-SVD procedure as in Algorithm~\ref{alg:SVD} with $K\gg N$.
For convenience, let $\rho_{k,n} = \#{ \cS_{k,n} }$ if $n \in \cK_k$ and $\rho_{k,n} = 0$ otherwise. Then the number of operations of this algorithm are dominated by the SVDs for joined blocks for each $k$ and $n$, and thus in total of order \[\mathcal{O}\biggl( \sum_{k=2}^K \sum_{n\in \cK_k} \min\{ \rho_{k-1,n}, \rho_{k,n} + \rho_{k,n+1}\}^2 \max\{ \rho_{k-1,n}, \rho_{k,n} + \rho_{k,n+1}\} \biggr).\] 

If $\rho_{k,n} = \bar\rho$ for all $k,n$, the corresponding total ranks are $r_k = (N+1)\bar\rho$, with exception of the $2N$ lowest and highest values of $k$, and thus
the total operation costs scale as $\mathcal{O}(K N \bar\rho^3)$. In comparison, the TT-SVD of a full MPS representation costs \[ \mathcal{O}\biggl(\sum_{k=2}^K \min\{r_{k-1},r_k\}^2 \max\{r_{k-1},r_k\} \biggr),\] where $\sum_{k=2}^K \min\{r_{k-1},r_k\}^2 \max\{r_{k-1},r_k\} \approx K N^3\bar\rho^3$. In such a case, the exploitation of block sparsity thus leads to a reduction of the costs approximately by a factor $N^2$. However, if for each $k$, one has $r_k  = \rho_{k,n}$ for some $n$, corresponding to only a single block being nonzero for each $k$, there is no reduction of storage or operations costs compared to the full MPS representation.
\end{remark}

Finally, we want to show how the tangent space of fixed-rank MPS manifolds (see \cite{holtz_manifolds_2012}) can be handled algorithmically with the block-sparse MPS format.  To this end, we state the parametrization in block-sparse MPS format of an arbitrary element $\ten y$ of the tangent space at a given $\ten{x} \in\mathbb{R}^{n_1\times\cdots\times n_K}$ of ranks $r = (r_1,\cdots,r_{K-1})$. 
For such $\ten y$, there exist cores $\delta Y_k\in\mathbb{R}^{r_{k-1}\times n_k\times r_{k}}$ such that
 \begin{align}\label{eq:tangentambientspace}
 \ten{y} = \sum_{k=1}^K \tau(U_1,\cdots,U_{k-1},\delta Y_k,V_{k+1},\cdots,V_K),
 \end{align}
 where as in Sec.~\ref{ssc:tangentspace}, we assume that $\ten x = \rmap{\rep{U}} = \rmap{\rep V}$ with $\rep{U} = (U_1,\cdots,U_K)$ in left- and $\rep{V} = (V_1,\cdots,V_K)$ in right-orthogonal form. Note that the cores $\delta Y_k$ necessarily have the same block-sparse structure as $U_k$ and $V_k$.
 
The projection of $\ten z = \rmap{\rep Z} \in \cF^K_N$ to the tangent space at $\ten x$ can be obtained by first computing the components $\delta Y_k$ of the projection $\ten y$ and then assembling the MPS representation of $\ten y$ using \eqref{eq:tangentambientspace}. The computation of the $\delta Y_k$ can be performed in a similar fashion as the inner product in Algorithm~\ref{alg:innerprod}, once from left to right and once from right to left. This means that for $k=1,\ldots,K-1$, one recursively evaluates 
 \[
 \left(\langle \rmapleq{k}{j}{\rep{U}},\rmapleq{k}{j'}{\rep{Z}}\rangle\right)_{j,j'} \text{ and } \left(\langle\rmapgtr{k}{j}{\rep{V}},\rmapgtr{k}{j'}{\rep{Z}}\rangle\right)_{j,j'}.
 \]
 With these quantities at hand, one obtains $\delta Y_k$ as described in \cite{steinlechner_riemannian_2016}.
Then one has the representation
 \begin{align*}
 \ten{y} = \begin{bmatrix}
 U_1 & \delta Y_1
 \end{bmatrix}\SKP \begin{bmatrix}
 U_2 & \delta Y_2\\ 0 & V_2
 \end{bmatrix}\SKP\cdots\SKP
 \begin{bmatrix}
 \delta Y_K \\ V_K
 \end{bmatrix},
 \end{align*}
where the rank indices in each core can be reordered to yield the same block-sparse structure as in $\rep U$ and $\rep V$ with at most doubled rank parameters.

\section{Matrix Product Operators}\label{sec:mpo}

It is well known that the Hamiltonian~\eqref{eq:hamil} commutes with the particle number operator $\ten P$ \cite[\S1.3.2]{Helgaker:00}:
\begin{lemma}\label{lemma:hamilpncommute}
	We have that the Hamiltonian and the particle number operator commute, that is, $\ten H\ten P = \ten P \ten H$. Furthermore, all eigenvectors of $\ten H$ are eigenvectors of $\ten P$.
\end{lemma}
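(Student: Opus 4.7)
The plan is to prove the two assertions in turn: first the commutator identity $[\ten H,\ten P]=0$ by reducing it to elementary commutators of creation/annihilation operators, and then the eigenvector statement via standard simultaneous-diagonalization of commuting self-adjoint operators with discrete spectrum.

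For the commutation, the key building block I would establish first is that for all $i,j$,
\[
  [\ten a_i^*\ten a_i,\ten a_j^*] = \delta_{ij}\,\ten a_j^*, \qquad [\ten a_i^*\ten a_i,\ten a_j] = -\delta_{ij}\,\ten a_j.
\]
Both identities follow directly from the anticommutation relations \eqref{eq:anticomm}: expand $\ten a_i^*\ten a_i\ten a_j^*-\ten a_j^*\ten a_i^*\ten a_i$, use $\ten a_i\ten a_j^* = \delta_{ij} - \ten a_j^*\ten a_i$ to move $\ten a_i$ past $\ten a_j^*$, and then $\ten a_i^*\ten a_j^* = -\ten a_j^*\ten a_i^*$ to cancel; the annihilator identity is analogous. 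Summing over $i$ gives the "number-counting" identities $[\ten P,\ten a_j^*]=\ten a_j^*$ and $[\ten P,\ten a_j]=-\ten a_j$.

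From here, the Leibniz rule for commutators yields $[\ten P,\cdot]$ on products of creation and annihilation operators as a sum with a $+1$ for each creator and a $-1$ for each annihilator. Concretely,
\[
  [\ten P,\ten a_i^*\ten a_j] = (+1-1)\,\ten a_i^*\ten a_j = 0, \qquad
  [\ten P,\ten a_i^*\ten a_j^*\ten a_k\ten a_l] = (+1+1-1-1)\,\ten a_i^*\ten a_j^*\ten a_k\ten a_l = 0,
\]
because each monomial in \eqref{eq:hamil} contains equal numbers of $\ten a^*$ and $\ten a$ factors. Linearity in the coefficients $t_{ij}$ and $v_{ijkl}$ then gives $[\ten H,\ten P]=0$. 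The only delicate point will be keeping signs straight in the Leibniz expansions, which is the main bookkeeping obstacle; everything else is mechanical application of \eqref{eq:anticomm}.

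For the second assertion, the argument I would give is the standard one: since $\ten P$ is self-adjoint on the finite-dimensional space $\cF^K$ with integer spectrum $\{0,1,\ldots,K\}$, it decomposes $\cF^K$ into the orthogonal direct sum of its eigenspaces $\cF^K_N$, $N=0,\ldots,K$. Because $\ten H$ commutes with $\ten P$, each $\cF^K_N$ is $\ten H$-invariant, so $\ten H$ decomposes as a block-diagonal sum of its restrictions $\ten H|_{\cF^K_N}$. Diagonalizing each block separately produces an orthonormal eigenbasis of $\ten H$ consisting of simultaneous eigenvectors of $\ten P$; equivalently, any eigenvector of $\ten H$ decomposes orthogonally into particle-number components, each of which is still an eigenvector of $\ten H$ with the same eigenvalue and is now an eigenvector of $\ten P$. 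This is the sense in which "all eigenvectors of $\ten H$ are eigenvectors of $\ten P$," and no further work beyond the commutation identity is required.
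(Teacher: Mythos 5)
The paper does not actually prove this lemma: it is stated as a known fact with a citation to \cite[\S1.3.2]{Helgaker:00}, so your proposal supplies an argument the paper omits. Your proof of the commutation identity is correct and self-contained: the elementary commutators $[\ten a_i^*\ten a_i,\ten a_j^*]=\delta_{ij}\,\ten a_j^*$ and $[\ten a_i^*\ten a_i,\ten a_j]=-\delta_{ij}\,\ten a_j$ follow from \eqref{eq:anticomm} exactly as you describe, and the Leibniz rule for commutators then annihilates every monomial of \eqref{eq:hamil} because each contains equally many creators and annihilators. In fact, no sign bookkeeping remains at that stage: all signs are already absorbed into the $\pm 1$ in $[\ten P,\ten a_j^*]=\ten a_j^*$ and $[\ten P,\ten a_j]=-\ten a_j$, since the commutator Leibniz rule itself introduces none.

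One caveat on the second assertion, which you partly flag yourself: commutation alone does not imply that \emph{every} eigenvector of $\ten H$ is an eigenvector of $\ten P$; it implies only that each eigenspace of $\ten H$ is $\ten P$-invariant and hence splits orthogonally into particle-number components, equivalently that a simultaneous eigenbasis exists. The literal statement fails whenever an eigenvalue of $\ten H$ is shared across two particle-number sectors --- for instance, the formally admissible choice $t_{ij}=v_{ijkl}=0$ makes every vector an eigenvector of $\ten H=0$, while most vectors are not eigenvectors of $\ten P$. So the lemma as written implicitly assumes that degeneracies of $\ten H$ do not straddle sectors (generically true for molecular Hamiltonians, and presumably the reading intended in the cited reference). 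Your reinterpretation --- any $\ten H$-eigenvector decomposes into components that are simultaneous eigenvectors of $\ten H$ and $\ten P$ --- is the correct provable statement, and it is also all that the rest of the paper uses, namely that $\ten H$ preserves the eigenspaces $\cF^K_N$ and hence the block structure.
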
 
Thus $\ten H$ preserves the particle number of a state as well as its block structure. In fact, we can show that every particle number-preserving operator can be written as a sum of \emph{rank-one particle number preserving operators} of the form $\ten{a}_{D^+}^*\ten{a}_{D^-}$ with subsets ${D^-,D^+\subseteq \{1,\dots,K\}}$ such that $\#{D^-}=\#{D^+}$, where $\ten{a}_D = \prod_{i\in D} \ten{a}_i$. Note that we define $\ten{a}_{\emptyset}$ to be the identity mapping. Furthermore, we associate  each $D \subseteq \{ 1,\ldots, K\}$ with a unit vector $\ten{e}_D = \ten{a}_D^*\vac$, where $\vac$ is the \emph{vacuum state}
\[
 \vac = \left(e^0\right)^{\otimes K} \,.
\]
We then have the following result, which is shown in Appendix \ref{app:oprankoneterms}.

\begin{lemma}\label{lmm:oprankoneterms}
	Let $\ten{B}: \cF^K \rightarrow  \cF^K$ be a particle number-preserving operator, that is, $\ten B$ maps each eigenspace of $\ten{P}$ to itself. Then there exist coefficients $v_{D^+,D^-}\in\mathbb{R}$ such that
\begin{equation}\label{eq:partop}
\ten{B} = \sum_{\substack{ D^+,D^- \subseteq \{1,\dots,K\} \\  \# D^+ = \# D^-} } v_{D^+,D^-} \ten{a}_{D^+}^*\ten{a}_{D^-},
\end{equation}
in other words, $\ten{B}$ can be written as a sum of rank-one particle number-preserving operators. 
\end{lemma}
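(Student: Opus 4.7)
The plan is to prove the lemma by a dimension-counting argument, after verifying that the rank-one operators $\ten{a}_{D^+}^*\ten{a}_{D^-}$ with $\#D^+ = \#D^-$ act as a basis of the space of particle number-preserving operators.

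First I would decompose $\cF^K$ into particle number eigenspaces: $\cF^K = \bigoplus_{N=0}^K \cF^K_N$, where $\cF^K_N$ has dimension $\binom{K}{N}$ with the orthonormal basis $\{\ten e_D : D \subseteq \{1,\ldots,K\},\ \#D = N\}$. Since $\ten B$ preserves each $\cF^K_N$, the space of particle number-preserving operators is isomorphic to $\bigoplus_{N=0}^K \End(\cF^K_N)$, which has dimension $\sum_{N=0}^K \binom{K}{N}^2 = \binom{2K}{K}$. On the other hand, the number of rank-one operators $\ten{a}_{D^+}^*\ten{a}_{D^-}$ with $\#D^+ = \#D^- = N$ is $\binom{K}{N}^2$, and summing over $N$ yields exactly $\binom{2K}{K}$.

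Next I would compute the action of $\ten{a}_{D^+}^*\ten{a}_{D^-}$ on basis vectors. Using the anticommutation relations \eqref{eq:anticomm} and $\ten e_D = \ten{a}_D^* \vac$, the annihilator product $\ten{a}_{D^-}$ satisfies $\ten{a}_{D^-} \ten e_D = 0$ whenever $D^- \not\subseteq D$, and in particular, for $\#D = \#D^-$ with $D \neq D^-$, we get zero. When $D = D^-$, iterated anticommutation gives $\ten{a}_{D^-} \ten e_{D^-} = \pm \vac$, so that
\[
  \ten{a}_{D^+}^*\ten{a}_{D^-} \ten e_{D} = \pm \delta_{D,D^-}\, \ten e_{D^+}, \quad \#D = \#D^- = \#D^+.
\]
Thus, restricted to $\cF^K_N$, the operator $\ten{a}_{D^+}^*\ten{a}_{D^-}$ is a nonzero scalar multiple of the elementary rank-one operator $\ten e_{D^+} \langle \ten e_{D^-}, \cdot \rangle$.

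Since these elementary operators form a basis of $\End(\cF^K_N)$ for each $N$, the family $\{\ten{a}_{D^+}^*\ten{a}_{D^-}\}_{\#D^+ = \#D^-}$ is linearly independent and, by the dimension count, spans the full space of particle number-preserving operators. Hence any such $\ten B$ admits an expansion of the form \eqref{eq:partop}. The main obstacle is the careful bookkeeping of the signs in $\ten{a}_{D^-}\ten e_{D^-} = \pm \vac$, but since only the non-vanishing of the coefficient is needed for the basis argument, the explicit signs can be absorbed into the coefficients $v_{D^+,D^-}$ and need not be tracked.
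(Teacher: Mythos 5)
Your approach---identifying the particle number-preserving operators with $\bigoplus_{N=0}^K \operatorname{End}(\cF^K_N)$ and counting dimensions---is genuinely different from the paper's, which instead constructs the coefficients $v_{D^+,D^-}$ explicitly by induction over the particle number, at each stage correcting the discrepancy $\ten{B}-\ten{B}_N$ on the $(N+1)$-particle eigenspace. Your computation of the action of $\ten{a}_{D^+}^*\ten{a}_{D^-}$ on $\ten{e}_D$ for $\#D=\#D^-=\#D^+$ is correct, as is the count $\sum_{N}\binom{K}{N}^2=\binom{2K}{K}$. However, there is a gap in your linear-independence step. The operator $\ten{a}_{D^+}^*\ten{a}_{D^-}$ with $\#D^\pm=N$ is \emph{not} the extension by zero of the matrix unit $\ten{e}_{D^+}\langle\ten{e}_{D^-},\cdot\rangle$ on $\cF^K_N$: it acts nontrivially on every eigenspace $\cF^K_M$ with $M>N$ (for instance, $\ten{a}_1^*\ten{a}_1$ is the identity on every basis state in which orbital $1$ is occupied, regardless of the total particle number). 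So the family is not a disjoint union of bases of the summands $\operatorname{End}(\cF^K_N)$, and the inference ``each degree-$N$ subfamily restricts to a basis of $\operatorname{End}(\cF^K_N)$, hence the whole family is linearly independent'' is a non sequitur as written.

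The gap is fixable, and the fix is essentially the mechanism the paper uses. Under your identification, the degree-$N$ operator has \emph{vanishing} components on $\cF^K_M$ for all $M<N$ (a product of $N$ annihilation operators kills any state with fewer than $N$ particles), a signed matrix unit as its component on $\cF^K_N$, and uncontrolled components on $\cF^K_M$ for $M>N$. The family is therefore block \emph{triangular} with respect to the grading by particle number, with invertible diagonal blocks. Linear independence then follows by induction from below: if a nontrivial linear combination vanished, restrict it to $\cF^K_{N_0}$ where $N_0$ is the minimal degree carrying a nonzero coefficient; all higher-degree terms vanish there, and the degree-$N_0$ terms act as independent matrix units, a contradiction. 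With this observation your dimension count does close the argument. It is worth noting that this triangularity is exactly what makes the paper's constructive induction work: adding the degree-$(N+1)$ correction terms to $\ten{B}_N$ does not disturb the agreement with $\ten{B}$ already achieved on eigenspaces of at most $N$ particles, and on the $(N+1)$-particle eigenspace the new terms act as signed matrix units, so the correction can be read off entrywise.
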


Linear operators on matrix product states can be in the MPO format \eqref{eq:mpo} with cores of order four. We will now investigate the ranks in the MPO representation of Hamiltonians of the form~\eqref{eq:hamil}, that is, particle number-preserving operators with one- and two-particle terms. As shown below, the MPO ranks of such operators grow at most quadratically with the order $K$ of the tensor. 
Furthermore, since both of these operators preserve the particle number, their effect on a block-sparse MPS can be expressed only in terms of the blocks. At the end of this section, we will show that each of the summands in~\eqref{eq:partop} describes nothing but a shift and scalar multiplication of some of the blocks. This means that the application of the Hamiltonian to a block-sparse MPS can be expressed in a matrix-free way, leading to an elegant and efficient algorithmic treatment.

\subsection{Compact Forms of Operators}\label{ssc:compactform}

We now turn to the ranks of Hamiltonians as in \eqref{eq:hamil} in second quantization in MPO format.
  As shown in this section, compared to the number of rank-one terms in the representation \eqref{eq:hamil}, one can obtain substantially reduced ranks in MPO representations. The basic mechanism behind this rank reduction is described in \cite{chan_matrix_2016} for projected Hamiltonians in the context of DMRG solvers {and, in an MPO form for full Hamiltonians similar to the one given here, in \cite{crosswhite2008finite,keller2015efficient}.} 
  {For one-particle operators, MPO representations are also given in the mathematical literature \cite{dolgov2013two,kazeev2013low}.}
  Here we use similar considerations to construct an explicit MPO representation of the full Hamiltonian with near-minimal ranks {and with a unified treatment of one- and two-particle operators}. To avoid technicalities, in what follows we assume $K$ to be even, but this is not essential for the construction.

In preparation for the MPO representations of the one- and two-particle operators, for illustrative purposes, start with the case of Laplace-like operators 
\begin{equation}\label{eq:laplacelikeF}
\ten{F} = \sum_{i=1}^K\lambda_i\ten{a}_i^*\ten{a}_i.
\end{equation}
We denote this operator by $\ten{F}$ since the Fock operator is of this form when its eigenfunctions are used as orbitals. 
From \eqref{eq:laplacelikeF} and \eqref{def:anniloperator}, we immediately obtain a representation of $\ten{F}$ of MPO rank $K$.
However, using the components in \eqref{def:anniloperator}, we can write $\ten{F}$ in MPO format with rank $2$. To this end, we define
\begin{align*}
F_1 = \begin{bmatrix}
I & \lambda_1 A^*A
\end{bmatrix},  \qquad
F_k = \begin{bmatrix}
I & \lambda_k A^*A\\ 0 & I
\end{bmatrix}, \;\;  k =2,\ldots, K-1,
\qquad
F_K = \begin{bmatrix}
\lambda_K A^*A \\ I
\end{bmatrix}.
\end{align*}
With these blocks, as in \cite{kazeev_low-rank_2012} one immediately verifies that the following representation holds and a linear scaling with respect to $K$ in the termwise representation \eqref{eq:laplacelikeF} can be reduced to a constant rank in the MPO format. 
\begin{lemma}
	We have $\ten{F} = F_1 \SKP F_2 \SKP \cdots \SKP F_K$,
	that is, $\ten{F}$ has an MPO representation of rank $2$.
\end{lemma}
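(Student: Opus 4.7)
The plan is to verify the claimed MPO representation by an induction on the number of factors in the Strong Kronecker product, exploiting the upper-triangular structure of the $F_k$ cores.

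First, I would record the elementary observation that $\ten{a}_i^*\ten{a}_i$ has no sign factors: since $S^2 = I$ and the $S$'s from $\ten{a}_i^*$ and $\ten{a}_i$ occupy the same first $i-1$ tensor positions, one has
\[
  \ten{a}_i^* \ten{a}_i \;=\; \biggl(\bigotimes_{k=1}^{i-1} I\biggr) \otimes A^*A \otimes \biggl(\bigotimes_{k=i+1}^{K} I\biggr),
\]
so that the target operator reads
\[
  \ten{F} \;=\; \sum_{i=1}^K \lambda_i \biggl(\bigotimes_{k=1}^{i-1} I\biggr) \otimes A^*A \otimes \biggl(\bigotimes_{k=i+1}^{K} I\biggr).
\]

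For the induction, set $G_k := F_1 \SKP F_2 \SKP \cdots \SKP F_k$ for $1 \leq k \leq K-1$ and claim the invariant
\[
  G_k \;=\; \begin{bmatrix} \displaystyle\bigotimes_{\ell=1}^{k} I & \;\;\displaystyle\sum_{i=1}^{k} \lambda_i \Bigl(\bigotimes_{\ell=1}^{i-1} I\Bigr) \otimes A^*A \otimes \Bigl(\bigotimes_{\ell=i+1}^{k} I\Bigr) \end{bmatrix}.
\]
The base case $k=1$ is just the definition of $F_1$. For the inductive step, using the definition of $\SKP$ from the preliminaries with $G_k$ viewed as a $1\times 2$ block row and $F_{k+1}$ as a $2\times 2$ block matrix whose $(2,1)$-entry is $0$ and whose diagonal entries are $I$, the first block of $G_{k+1} = G_k \SKP F_{k+1}$ reduces to $G_k^{[1,1]} \otimes I = \bigotimes_{\ell=1}^{k+1} I$, and the second block becomes
\[
  G_k^{[1,1]} \otimes (\lambda_{k+1} A^*A) + G_k^{[1,2]} \otimes I \;=\; \sum_{i=1}^{k+1} \lambda_i \Bigl(\bigotimes_{\ell=1}^{i-1} I\Bigr) \otimes A^*A \otimes \Bigl(\bigotimes_{\ell=i+1}^{k+1} I\Bigr),
\]
which matches the invariant at level $k+1$.

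Finally, the last multiplication $G_{K-1} \SKP F_K$ contracts a $1\times 2$ block row with the $2\times 1$ block column $F_K$, yielding
\[
  G_{K-1}^{[1,1]} \otimes (\lambda_K A^*A) + G_{K-1}^{[1,2]} \otimes I \;=\; \sum_{i=1}^{K} \lambda_i \Bigl(\bigotimes_{\ell=1}^{i-1} I\Bigr) \otimes A^*A \otimes \Bigl(\bigotimes_{\ell=i+1}^{K} I\Bigr) \;=\; \ten{F},
\]
as required. There is no real obstacle here: the argument is purely bookkeeping with the Strong Kronecker product, and the only subtlety worth flagging is the cancellation $S^2 = I$ ensuring that $\ten a_i^*\ten a_i$ has the clean Laplace-like form that the cores $F_k$ are tailored to produce.
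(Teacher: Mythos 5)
Your proof is correct and takes essentially the same approach as the paper, which simply states that with these cores "one immediately verifies" the representation, citing the standard construction for Laplace-like operators. Your induction on the partial products $F_1 \SKP \cdots \SKP F_k$, together with the preliminary observation that $S^2 = I$ reduces $\ten{a}_i^*\ten{a}_i$ to the form $I \otimes \cdots \otimes A^*A \otimes \cdots \otimes I$, is precisely that verification written out in detail.
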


Before we turn to the one- and two-particle operators, we define some notation that will be needed in both cases. For abbreviating blocks with repeated components, we introduce the abbreviations
\begin{equation*}
\rep{I}_k = I_k \uparrow I =\begin{bmatrix}
I & 0 & \cdots & 0\\
0 & I & \cdots & 0\\
\vdots &\vdots & \ddots & \vdots\\
0 & 0 & \cdots & I
\end{bmatrix}\in\mathbb{R}^{k\times 2\times 2\times k},\quad
\rep{S}_k = I_k \uparrow S = \begin{bmatrix}
S & 0 & \cdots & 0\\
0 & S & \cdots & 0\\
\vdots &\vdots & \ddots & \vdots\\
0 & 0 & \cdots & S
\end{bmatrix}\in\mathbb{R}^{k\times 2\times 2\times k},
\end{equation*}
and analogously $\rep{A}_k = I_k \uparrow A$ and $\rep{A}^*_k = I_k \uparrow A^*$, where we write $I_k$ for the identity matrix of size $k$.
 
With this, we can turn to the one-particle operator $\ten{S}$ given by
\[
\ten{S} =\sum_{i,j=1}^Kt_{ij}\ten{a}_i^*\ten{a}_j,
\]
with symmetric coefficient matrix $(t_{ij})_{i,j=1,\ldots, K}$.
Naively, $\ten{S}$ can be written in MPO format with rank $K^2$, but again, one can do better: we now show that $\ten{S}$ in fact can be written in MPO format with rank $K+2$.

For each $k \in \{1,\ldots,K\}$, we define some slices of the coefficient matrix $T = (t_{ij})_{i=1,\ldots, K}^{j=1,\ldots, K}$, where the subscript indices correspond to rows and the superscript indices to columns:
\begin{align*}
W_{T,k}^1 &= (t_{ik})_{i=1,\ldots, k-1},\quad & W_{T,k}^2 &= (t_{kj})_{j=1,\ldots, k-1} , \\
W_{T,k}^3 &= (t_{kj})^{j=K,\ldots, k+1},\quad & W_{T,k}^4 &= (t_{ik})^{i=K,\ldots, k+1}.
\end{align*}
Furthermore, the top-right and bottom-left blocks of $T$ are given by
\begin{equation*}
W_{T}^5  = (t_{ij})_{i=1,\ldots, K/2}^{j=K,\ldots, K/2+1},\qquad \qquad \qquad \qquad  W_{T}^6 = (t_{ij})_{j=1,\ldots K/2}^{i=K,\ldots, K/2+1}.
\end{equation*}
We define the components
\[
 T_1 = \begin{bmatrix}
I & A^* & A & t_{1,1}A^*A
\end{bmatrix}\,,\qquad 
T_K = \begin{bmatrix}
I \\ A \\ A^* \\ t_{K,K}A^*A
\end{bmatrix},
\]
for $k = 2,\ldots,\frac{K}{2}$,
\begin{equation*}
T_k = \begin{bmatrix}
I & 0  &A & 0 &A^*& t_{k,k}A^*A\\
0 & \rep{S}_{k-1}  & 0 &  0 &0 & W_{T,k}^1 \uparrow A^*\\
0 & 0  &0 & \rep{S}_{k-1} &  0 & W_{T,k}^2 \uparrow A\\
0 & 0 &0 & 0&0& I\\
\end{bmatrix},
\end{equation*}
and for $k = \frac{K}{2} + 1, \ldots,K-1$,
\begin{equation*}
T_k = \begin{bmatrix}
I & 0  &0 &  0\\
0 & \rep{S}_{K-k}   & 0   & 0  \\
A & 0  &0 & 0\\
0 & 0 &  \rep{S}_{K-k} & 0\\
A^* & 0  &  0  & 0\\
t_{k,k}A^*A & W_{T,k}^3\uparrow A^* & W_{T,k}^4\uparrow A& I\\
\end{bmatrix}.
\end{equation*}
Finally, let
\begin{equation*}
M_T = \begin{pmatrix}
0 & 0 & 0 & 1\\
0 & 0 &W_{T}^6 & 0 \\
0 & W_{T}^5 & 0 & 0 \\
1 & 0 & 0 & 0 \\
\end{pmatrix}.
\end{equation*}
This allows us to state the one-particle operator explicitly and with (near-)minimal ranks. {The same rank bounds can also be extracted from the alternative MPO representation in \cite[Thm.~4.2]{dolgov2013two} (see also \cite[Lemma 3.2]{kazeev2013low}); the construction we describe here, however, also serves as a preparation for our similar approach to the two-particle case.}

\begin{theorem}\label{thm:oneelecasMPO}
	We have 
	\begin{align*}
	\ten{S} = T_1\SKP T_2\SKP \cdots \SKP T_{ K/2}   M_T \SKP T_{ K/2+1} \SKP \cdots \SKP T_K.
	\end{align*}
	Furthermore, the MPO rank of $\ten{S}$ is bounded by $K+2$. If for some $d \in \N_0$, we have $ t_{ij} = 0$ whenever $\abs{i-j} > d$,
	then the MPO ranks of $\ten S$ are bounded by $2d+2$.
\end{theorem}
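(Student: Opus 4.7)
The plan is to verify the identity by interpreting the Strong Kronecker product as a sum over paths through a finite state automaton whose states live at the rank indices of the cores, with each accepting path reproducing exactly one summand $t_{ij}\ten a_i^* \ten a_j$ of $\ten S$. The rank bounds then follow directly from counting the admissible states.

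First, I would attach a combinatorial meaning to the rank channels. For $k \leq K/2$, the $2k$ left-rank indices of $T_k$ split into a ``pre'' channel (no $A$- or $A^*$-insertion started yet), $k-1$ ``post-$A^*$ at $i$'' channels (a creation was started at some site $i<k$ and we are currently transporting its Jordan-Wigner string), $k-1$ ``post-$A$ at $i$'' channels (analogously for an annihilation), and a ``done'' channel. The block entries of $T_k$ then read off the admissible transitions at site $k$: the first row lets us stay in ``pre'', open a new $A_k^*$ or $A_k$ string, or account for the diagonal term $t_{k,k}\ten a_k^*\ten a_k$; the $\rep S_{k-1}$ blocks propagate existing strings (using $SA=A$, $SA^*=-A^*$ to generate the correct sign); and the $W_{T,k}^1\uparrow A^*$ and $W_{T,k}^2\uparrow A$ blocks close an open string at site $k$ with the appropriate coefficient $t_{i,k}$ or $t_{k,j}$. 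The key algebraic ingredients are the identities for $S$, $A$, $A^*$, together with $A^2 = (A^*)^2 = 0$ and $A^*A = \diag(0,1)$; they imply that the tensor factor produced by any accepting path equals exactly the operator $\ten a_i^* \ten a_j$ for the pair $(i,j)$ labelling the path.

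For pairs $(i,j)$ whose support crosses the middle cut, the matching is performed by $M_T$: the block $W_T^5$ pairs a ``post-$A^*$ at $i$'' state emerging from $T_{K/2}$ with a second-half state that still owes an $A$ at site $j>K/2$, weighted by $t_{ij}$; the block $W_T^6$ handles the symmetric case; and the entries $M_T(1,4)=1$ and $M_T(4,1)=1$ route the ``pre'' and ``done'' states through the middle. A mirrored analysis of $T_{K/2+1},\ldots,T_K$, whose left-rank states symbolize ``operators still to be applied in the second half'', closes the construction by connecting every state to the unique right-rank channel of $T_K$. Setting up a bijection between accepting paths and pairs $(i,j)\in\{1,\ldots,K\}^2$ and collecting coefficients along each path then yields the claimed identity.

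The MPO rank bound is immediate from the block sizes: the right rank of $T_k$ in the first half is $1+(k-1)+1+(k-1)+1+1 = 2k+2$, peaking at $K+2$ for $k=K/2$, which matches the dimension of $M_T$, and the second half shrinks symmetrically. For the banded case, the crucial observation is that a ``post-$A^*$ at $i$'' state must be closed at some later site $k$ by the coefficient $t_{i,k}$, which vanishes as soon as $k-i>d$; the rank channel corresponding to this state therefore carries only zero entries at positions $k>i+d$ and may be discarded. Hence at every $k$ only at most $d$ channels of each intermediate type need to be retained, giving total MPO rank $\leq 1+d+d+1 = 2d+2$. The delicate part of the whole argument is the sign bookkeeping in the automaton interpretation: one has to check, for pairs of every orientation ($i<j$ and $i>j$, with or without crossing $K/2$), that the signs from the $\rep S_{k-1}$ blocks together with $SA=A$ and $SA^*=-A^*$ combine to reproduce $t_{ij}\ten a_i^*\ten a_j$ rather than a sign-flipped variant, and this is where the specific placement of $A$, $A^*$, and the $\uparrow$-factors in $T_k$ is essential.
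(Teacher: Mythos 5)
Your proposal is correct, and its core is the same argument as the paper's: in its appendix the paper proves the identity by induction, showing that $T_1 \SKP \cdots \SKP T_k = [\,\ten{I}_k \;\; \rep{a}_k \;\; \rep{a}_k^* \;\; \ten{S}_k\,]$ for $k \leq K/2$ (and a mirrored statement for the second half), and these four groups of rank channels are exactly your ``pre'', ``post-$A$'', ``post-$A^*$'' and ``done'' automaton states; your bijection between accepting paths and pairs $(i,j)$ is just the unrolled form of that induction, and $M_T$ plays the identical matching role in both, with $W_T^5$, $W_T^6$ carrying the coefficients of the terms that cross the middle cut. Two differences are worth recording. First, for the banded case the paper argues only at the central cut: it bounds the matrix rank of $M_T$ by $2 + \rank W_T^5 + \rank W_T^6 \leq 2d+2$ and asserts that the rank is maximized at the center, whereas you prune dead channels at every cut (a ``post-$A^*$ at $i$'' channel alive past site $i+d$ can only ever be closed against coefficients $t_{ij}$ with $j-i>d$, all of which vanish, so it may be deleted). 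Your argument is more uniform and in fact supplies the justification for the non-central cuts that the paper leaves implicit, so this part of your proposal is, if anything, more complete than the paper's. Second, the sign bookkeeping you single out as the delicate step is actually vacuous in this construction: because every path realizes the ordered product $\ten{a}_i^*\ten{a}_j$, the only matrix identities needed are $S^2 = I$, $A^*S = A^*$ and $SA = A$, so no minus signs ever arise; the identity $SA^* = -A^*$ that you cite is never invoked (it would enter only for the reversed ordering $\ten{a}_j\ten{a}_i^*$, which does not occur here). Neither point is a gap; the first strengthens the argument and the second simplifies it.
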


{For the proof, we proceed as follows:} We divide the claim into two cases $k \leq \frac{K}{2}$ and $k > \frac{K}{2}$ and show by induction over $k$ that the rank of $T_k$ for $k\leq \frac{K}{2}$ can be bounded by $2+2k$. Consequently, for $k=\frac{K}{2}$ we find that the rank of $\ten{S}$ can be bounded by $K+2$. This is also the bound for the rank of the matrix $M_T$. For the sparse coefficient matrix we directly consider $M_T$, since the rank is maximized at the center of the representation, where the rank of $W_{T}^5$ and the rank of $W_{T}^6$ can be bounded by $d$ in both cases. Thus the rank of $M_T$ is bounded by $2d+2$. 
{The details of the proof are given Appendix~\ref{app:oneelecproof}.}

The case of two-electron operators $\ten{D}$ given by
\[
\ten{D} = \sum_{i_1,i_2,j_1,j_2=1}^K v_{i_1 i_2 j_1 j_2}\ten{a}_{i_1}^* \ten{a}_{i_2}^* \ten{a}_{j_1} \ten{a}_{j_2}
\]
is more involved but can be dealt with quite analogously. We briefly note that due to the anticommutation relations \eqref{eq:anticomm}, one only needs to do the sum over $i_1 < i_2$, $j_1 < j_2$, which reduces the number of terms: By grouping together
\begin{align*}
\tilde v_{i_1 i_2 j_1 j_2} = &\begin{cases}
v_{i_1 i_2 j_1 j_2} + v_{i_2 i_1 j_2 j_1} - v_{i_2 i_1 j_1 j_2} - v_{i_1 i_2 j_2 j_1}, & i_1 < i_2, j_1 < j_2, \\
0 ,& \text{otherwise,}
\end{cases}
\end{align*}
we obtain 
\[
\ten{D} =  \sum_{i_2,j_2=1}^K \sum_{i_1=1}^{i_2-1} \sum_{j_1=1}^{j_2-1} \tilde v_{i_1 i_2 j_1 j_2} \ten{a}_{i_1}^* \ten{a}_{i_2}^* \ten{a}_{j_1} \ten{a}_{j_2}.
\]
We denote the tensor grouping the coefficients of $V$ by $\tilde V=\left(\tilde v_{i_1 i_2 j_1 j_2}\right)_{i_1 i_2 j_1 j_2=1}^K$. Again, the Kronecker-rank of this operator is $\binom{K}{2}^2 = O(K^4)$, so naively $\ten D$ could be written with MPO rank $\binom{K}{2}^2$. But we can do better.
With the help of  the matrices in \ref{def:anniloperator} we can write $\ten{D}$ in MPO format with rank $\frac12 K^2 + \frac32{K} + 2$.

As before we need to extract different matrix slices from $\tilde V$, where again subscript indices correspond to rows and superscript indices to columns:
\begin{align*}
W_{V,k}^1 &= (\tilde v_{i_1kkj_2})_{i_1=1,\ldots, k-1}^{j_2=k+1,\ldots, K} \quad &  W_{V,k}^2 & = (\tilde v_{ki_2kj_2})_{j_2=1,\ldots, k-1}^{i_2=k+1,\ldots, K}\\
W_{V,k}^3 &= (\tilde v_{i_1kj_1j_2})_{i_1=1,\ldots, k-1;\,j_1=1,\ldots, k-1}^{j_2=k+1\cdots K} \quad & W_{V,k}^4 &= (\tilde v_{i_1i_2j_1k})_{i_1=1,\ldots, k-1;\,j_1=1,\ldots, k-1}^{i_2=k+1,\ldots, K}\\
W_{V,k}^5 &= (\tilde v_{i_1kj_1k})_{i_1=1,\ldots, k-1 ;\, j_1=1,\ldots, k-1} \quad & W_{V,k}^6 &= (\tilde v_{i_1i_2kj_2})_{i_1=1,\ldots, i_2-1;\,i_2=2,\ldots, k-1}^{j_2=k+1,\ldots, K}\\
W_{V,k}^7 &= (\tilde v_{ki_2j_1j_2})_{j_1=1,\ldots, j_2-1 ;\,j_2=2,\ldots, k-1}^{i_2=k+1,\ldots, K}\quad & W_{V,k}^8 &= (\tilde v_{ki_2j_1j_2})_{i_1=1,\ldots, K/2 ;\, j_1=1,\ldots, K/2}^{i_2=K,\ldots, K/2+1 ;\, j_2=K,\ldots, K/2+1}\\
W_{V,k}^9 &=  (\tilde v_{ki_2j_1j_2})_{j_1=2,\ldots, K/2;\, j_2=1,\ldots, j_1-1}^{i_1=K,\ldots, i_2+1 ;\, i_2=K-1,\ldots, K/2+1}\quad  & W_{V,k}^{10}  &= (\tilde v_{ki_2j_1j_2})_{i_1=1,\ldots, i_2-1 ;\, i_2=2,\ldots, K/2}^{j_1=K-1,\ldots, K/2+1; \, j_2=K,\ldots, j_1+1}
\end{align*}
For $k = 1,\ldots, \frac{K}{2}$ let us define the blocks  
\begin{align*}
V_k^{1,1} &= \begin{bmatrix}I & 0 & A^* & 0 & A&0&A^*A&0&0&0&0&0&0\\
0 & \rep{S}_{k-1} & 0 & 0 & 0&0&0&\rep{A}_{k-1}&0&0&\rep{A}_{k-1}^*&0&0\\
0 & 0 & 0 & \rep{S}_{k-1} & 0&0&0&0&\rep{A}_{k-1}^*&0&0&0&\rep{A}_{k-1}\\
0&0&0&0&0 & \rep{I}_{k^2}&0&0&0&0&0&0&0\\
0&0&0&0&0 &0&0&0&0&\rep{I}_{\binom{k-1}{2}}&0&0&0\\
0&0&0&0&0 &0&0&0&0&0&0&\rep{I}_{\binom{k-1}{2}}&0
\end{bmatrix}\\
\intertext{as well as}
V_k^{1,2} &= \begin{bmatrix}
0 &0 &0\\
W_{V,k}^1\uparrow A^*A  &0&0\\
0&W_{V,k}^2\uparrow A^*A & 0\\
W_{V,k}^3\uparrow A^*&W_{V,k}^4\uparrow A&W_{V,k}^5\uparrow A^*A\\
W_{V,k}^6\uparrow A&0&0\\
0&W_{V,k}^7\uparrow A^*&0\\
\end{bmatrix} , \qquad \qquad 
V_k^{2,2} =\begin{bmatrix}
0&0&A\\
\rep{S}_{K-k}&0&0 \\
0&0&A^*\\
0&\rep{S}_{K-k}&0\\
0&0&I
\end{bmatrix}\,.
\end{align*}
With these blocks we have
\begin{equation*}
V_1 = V_1^{1,1} \quad V_2 = \begin{bmatrix}
V_2^{1,1} & V_2^{1,2}\end{bmatrix} 
\quad \mathrm{and} \quad V_k = \begin{bmatrix}
V_k^{1,1} & V_k^{1,2}\\ 0 &V_k^{2,2}
\end{bmatrix}, \, k = 3,\ldots, \frac K2.
\end{equation*}
Furthermore, we set
\begin{align*}
M_V =& \begin{pmatrix}
0&\cdots&0&0&0&\cdots&1\\
\vdots&\ddots&\vdots&\vdots&\vdots&\iddots&\vdots\\
0&\cdots&0&0&1&\cdots&0\\
0&\cdots&0&V_{\mathrm{mid}} &0&\cdots&0\\
0&\cdots&1&0&0&\cdots&0\\
\vdots&\iddots&\vdots&\vdots&\vdots&\ddots&\vdots\\
1&\cdots&0&0&0&\cdots&0\\
\end{pmatrix}, \qquad \qquad V_{\mathrm{mid}} = \begin{pmatrix}
W_{V,k}^8&0&0\\
0&0&W_{V,k}^{10}\\
0&W_{V,k}^9&0\\
\end{pmatrix} \,,
\end{align*}
where there are $\frac{K}{2}+1$ ones on each of the two antidiagonals above and below $V_{\mathrm{mid}}$.
Finally, for $k = \frac{K}{2}+1,\ldots, K$, we analogously obtain 
\begin{equation*}
V_K = V_K^{1,1} \quad V_{K-1} = \begin{bmatrix}
V_{K-1}^{1,1} \\ V_{K-1}^{2,1}\end{bmatrix} 
\quad \mathrm{and} \quad V_k = \begin{bmatrix}
V_k^{1,1} & 0\\ V_k^{2,1} &V_k^{2,2}
\end{bmatrix}, \, k = \frac K2 +1,\ldots,  K-2. 
\end{equation*}
where $\left(V_k^{2,1}\right)^T$ is similar to $V_k^{1,2}$ with modified coefficients.  

With the necessary notation out of the way, we immediately state the MPO representation of the two-particle operator with near-minimal ranks.
\begin{theorem}\label{thm:twoelecasMPO}
We have
\begin{align*}
\ten{D} = V_1\SKP V_2\SKP \cdots \SKP V_{K/2}  M_V \SKP V_{ K/2 + 1}\SKP \cdots \SKP V_K,
\end{align*}
implying that $\ten D$ has an MPO representation of rank $\frac12 {K^2}+ \frac32 {K}+2$. If there exists $d\in\N_0$ such that $v_{i_1 i_2 j_1 j_2} = 0$ whenever  \begin{equation}\label{eq:localityassumption}  \max\{ \abs{i_1-i_2},\abs{i_1-j_1},\abs{i_1-j_2},\abs{i_2-j_1},\abs{i_2-j_2},\abs{j_1-j_2}  \} > d,\end{equation} then the MPO ranks of $\ten{D}$ are bounded by $d^2+3d-1$ if $d$ is odd and by $d^2+3d-2$ if $d$ is even.
\end{theorem}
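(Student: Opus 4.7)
The plan is to adapt the proof of Theorem \ref{thm:oneelecasMPO} to the more elaborate four-operator bookkeeping required by $\ten D$. The proof naturally splits into three parts: verification of the identity, counting of the generic rank, and counting under the locality assumption.

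For the identity I would argue by induction on $k$, showing that the partial strong Kronecker product $V_1 \SKP \cdots \SKP V_k$ for $k\le K/2$ equals a sum of tensor products of $I, S, A, A^*, A^*A$ on the first $k$ sites whose virtual indices enumerate the configurations of ``open'' operators still to be matched on the right: zero, one, or two of $\{\ten a_{i_1}^*, \ten a_{i_2}^*, \ten a_{j_1}, \ten a_{j_2}\}$ already placed at positions $\le k$, with the appropriate Jordan--Wigner string $S$ trailing on the remaining sites. The identity-chain rows and the $\rep S_{k-1}, \rep A_{k-1}, \rep A^*_{k-1}$ blocks in $V_k^{1,1}$ propagate or cap these strings, while the columns of $V_k^{1,2}$ inject the correct coefficients $\tilde v_{i_1 i_2 j_1 j_2}$ for every summand in which at least one of the four indices equals $k$ and the other indices lie in $\{1,\dots,k-1\}$. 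The matrix $M_V$ then pairs up virtual indices from the two halves: the identity antidiagonals match configurations in which all four operators lie on the same side (the other side acting as the identity with Jordan--Wigner strings), and $V_{\mathrm{mid}}$, through the blocks $W_V^8, W_V^9, W_V^{10}$, contributes the remaining cases in which the four indices straddle the midpoint. The symmetrically defined blocks for $k\ge K/2+1$ close the induction, with all anticommutation signs absorbed into the symmetrisation $\tilde v$.

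For the rank I would simply read off the column dimension of $V_{K/2}$ from its block layout, summing the widths $1, k-1, 1, k-1, 1, k^2, 1, k-1, k-1, \binom{k-1}{2}, k-1, \binom{k-1}{2}, k-1$ of the columns of $V_k^{1,1}$ together with those of $V_k^{2,2}$ and specialising to $k = K/2$. This yields exactly the claimed upper bound $\tfrac12 K^2 + \tfrac32 K + 2$; the same bound also governs the rank of $M_V$, so the MPO rank of $\ten D$ is no larger.

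The main obstacle will be the bandwidth case. Under \eqref{eq:localityassumption} each slice $W_{V,k}^i$ becomes an effectively $d$-banded submatrix: only $O(d)$ rows of $W_{V,k}^1,\dots,W_{V,k}^4$ carry nonzero entries, only $O(d^2)$ entries of $W_{V,k}^5$ survive, and the central pair block admits at most $\binom{d}{2}$-type support. A careful combinatorial accounting, analogous to the one used for $\ten S$ but now tracking \emph{pairs} of positions rather than single positions, shows that the single-operator chains contribute at most $O(d)$ active virtual indices each, while the two-operator chains contribute at most $d^2$. Summing the contributions, the parity of $d$ enters through how many pair-windows straddle $k=K/2$ when $d$ is even versus odd, which is precisely the reason for the $d^2+3d-1$ versus $d^2+3d-2$ dichotomy. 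As in Theorem \ref{thm:oneelecasMPO}, this case analysis is cumbersome enough that I would relegate the full bookkeeping to an appendix, giving only the block-by-block table of supports in the main text.
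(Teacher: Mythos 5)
Your proposal follows essentially the same route as the paper's own proof in Appendix~C: the induction on partial products $V_1 \SKP \cdots \SKP V_k$ whose virtual indices enumerate the ``open'' operator configurations is exactly the paper's bookkeeping via $\ten{I}_k, \rep{a}_k^*, \rep{a}_k, \rep{b}_k, \rep{c}_k^*, \rep{c}_k, \rep{e}_k, \rep{e}_k^*, \ten{D}_k$, the generic rank is likewise read off from the column widths at $k=K/2$ (giving $1+3k+4\binom{k}{2}$ plus $2(K-k)+1$), and the banded case is likewise settled by counting active indices at $M_V$. The only caveat is that your crude estimates ($O(d)$ per single-operator chain, ``at most $d^2$'' per pair chain) are too loose to produce the exact constants $d^2+3d-1$ and $d^2+3d-2$; as in the paper, one needs the sharper window counts of the form $\sum_{\ell}\min\{2\ell-1,2(d-\ell)-1\}$ and $\sum_{\ell}\min\{\ell-1,d-\ell\}$, whose closed forms are where the parity of $d$ actually enters.
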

\begin{proof}
We can proceed as in the proof of Theorem~\ref{thm:oneelecasMPO}. Again, a detailed proof can be found in Appendix~\ref{app:twoelecproof}.
\end{proof}

\begin{remark}
It is possible to incorporate the one-electron operator into the two-electron operator, such that the MPO ranks of $\ten{S}+\ten{D}$ are also bounded by $\frac12 {K^2} + \frac32 {K} +2$. Note that the stated rank bounds are not sharp for the leading and trailing cores, where further reductions are possible with additional technical effort (see also Sec.~\ref{ssc:opranktests}).
\end{remark}

\subsection{Matrix-Free Operations on Block Structures}
\label{ssc:matfree}
{Corollary}~\ref{cor:blockstructure} implies that particle number-preserving operators must also preserve the block structure. In other words, if $\ten{x} \in \cF^K_N$ and if $\ten{B}$ is any particle number-preserving operator, then $\ten{y} := \ten{B} \ten{x} \in \cF^K_N$ has a representation with block structure according to {Corollary}~\ref{cor:blockstructure}. However, if $\rep{B}$ is an MPO representation of $\ten{B}$ as derived in Section \ref{ssc:compactform} and $\ten{x} = \tau(\rep{X})$ with block-sparse $\rep{X}$, this leaves open the questions whether we can directly obtain the block-sparse representation of $\ten{y}$ from the standard representation $\rep{Y} := \rep{B} \bullet \rep{X}$ of the matrix-vector product, or whether additional transformations of $\rep{Y}$ are required to extract the block structure. We now describe how the blocks of $\ten{y}$ can be obtained directly by replacing the component matrices $I$, $S$, $A$, $A^*$, $A^* A$ in $\rep{B}$ by certain matrix-free operations on the blocks of $\rep{X}$. {Furthermore, these operations are performed entirely componentwise, that is, each component can be computed separately from the others and even partial evaluations are possible.}

It turns out that each summand in~\eqref{eq:partop} acts on the block-sparse MPS by shifting and deleting some of the blocks. This can be visualized by considering the one-particle part of the Hamiltonian, specifically the case where $i = j$:
\begin{equation*}
\ten a_i^* \ten a_i = I \otimes \dots \otimes A^* A \otimes \dots \otimes I. 
\end{equation*}
Since this operator has Kronecker rank one, each matrix in the product acts only on the corresponding component in the block MPS. Clearly, the identity matrices leave their components and their respective block structure unchanged. Only the matrix 
\[
A^*A = \begin{pmatrix} 0 & 0 \\ 0 & 1 \end{pmatrix}
\] has the immediate effect of assigning zero to all unoccupied blocks,
\begin{equation}\label{eq:AsAop}
\unocc{X}_{i,n} \xleftarrow[]{A^* A} 0 \quad \text{ for } n \in \cK_{i-1} \cap \cK_i.
\end{equation}
Thus in this case, the particle number is preserved \emph{locally} at orbital $i$ and therefore, the block structure remains otherwise unchanged. 

Additional difficulties appear, however, when $i\neq j$, since the particle number is then conserved only by the combination of operations on different modes. Let us first consider $i<j$,
\begin{equation*}
\ten a_i^* \ten a_j = I \otimes \dots \otimes A^* \otimes S \otimes \dots \otimes S \otimes A \otimes \dots \otimes I. 
\end{equation*}
To avoid technicalities, for the moment we assume $N < i < j < K - N + 1$, corresponding to the generic case where all blocks appear in each core.
Again, the identity matrices leave everything unchanged. 
The creation matrix $A^*$
replaces the occupied layer $X_i^{\{1\}}$ by the unoccupied layer $X_i^{\{0\}}$.
However, this clearly violates the block structure, because occupied blocks should only be located on the off-diagonal and because $N \notin (\cK_i - 1)$. This inconsistency can only be resolved by noting that the added particle will be removed further down in the $j$-th position of the tensor.
Additionally, we have to take into account that a particle was added to the left of all following components, thus increasing the particle count $n$ by one in each block. The solution to the block structure violation therefore lies in shifting the corresponding blocks and deleting the ones that violate particle number counts.
We summarize the case $N < i < j < K - N + 1$ as follows:
\[
A^* \bullet X_i = \begin{bmatrix}
0 & \occ{X}_{i,0}^{\uparrow} & \cdots & 0 \\[-3pt] 
0 & 0 & \ddots & \vdots \\[-2pt]
\vdots & \vdots & \ddots & \occ{X}_{i,N-1}^{\uparrow} \\[3pt]
0 & 0 & \cdots & 0
\end{bmatrix}, \quad
A \bullet X_j = \begin{bmatrix}
0 & 0 & \cdots & 0 \\[-3pt] 
0 & \unocc{X}_{j,1}^{\uparrow} & \ddots & \vdots \\[-2pt]
\vdots & \vdots & \ddots & 0\\[3pt]
0 & 0 & \cdots & \unocc{X}_{j,N}^{\uparrow}
\end{bmatrix},
\]
where application of $A^*$ corresponds to the block operations
\begin{equation}\label{eq:Asop}
\begin{aligned}
\unocc{X}_{i,n} \hspace{11pt} &\xleftarrow[]{A^*} 0 && \quad \text{ for } n \in \cK_{i-1} \cap \cK_i, \\
\occ{X}_{i,n} \hspace{11pt} &\xleftarrow[]{A^*} \unocc{X}_{i,n} && \quad \text{ for } n \in \cK_{i-1} \cap (\cK_i - 1), 
\end{aligned}
\end{equation}
application of $A$ to the block operations
\begin{equation}\label{eq:Aop}
\begin{aligned}
\unocc{X}_{j,0} \hspace{12pt} &\xleftarrow[]{A} 0, && \\
\unocc{X}_{j,n+1} &\xleftarrow[]{A} \occ{X}_{j,n} && \quad \text{ for } n \in \cK_{j-1} \cap (\cK_j - 1), \\
\occ{X}_{i,n} \hspace{11pt} &\xleftarrow[]{A} 0 && \quad \text{ for } n \in \cK_{j-1} \cap (\cK_j - 1),
\end{aligned}
\end{equation}
and for $i < k < j$, applying $S$ amounts to the block operations
\begin{equation}\label{eq:Sop}
\begin{aligned}
\unocc{X}_{k,0} \hspace{12pt} &\xleftarrow[]{S} 0, && \\
\occ{X}_{k,0} \hspace{12pt} &\xleftarrow[]{S} 0, && \\
\unocc{X}_{k,n+1} &\xleftarrow[]{S} \unocc{X}_{k,n} && \quad \text{ for } n \in (\cK_{k-1} \cap \cK_k) \setminus \{0\}, \\
\occ{X}_{k,n+1} &\xleftarrow[]{S} -\occ{X}_{k,n} && \quad \text{ for } n \in (\cK_{k-1} \cap (\cK_k - 1)) \setminus \{0\}.
\end{aligned}
\end{equation}
The case where $j < i$ can be dealt with analogously, but with the opposite shift because a particle gets removed on the left and all particle counts $n$ have to be decreased by one until a particle gets added again in component $i$. This means that we have to distinguish the different cases in the implementation of, for instance, the action of the matrix $S$.

\begin{remark}\label{rem:blockoptechnicalities}
Some further technicalities need to be taken into account in an implementation:
\begin{enumerate}[{\rm(i)}]
\item The sizes of blocks that are set to zero in the above operations is dictated by the consistency of the representation: zero blocks of nontrivial size need to be kept, whereas redundancies due to zero blocks with a vanishing dimension need to be removed.
\item For the border cases, such as $i < N$ or $j > K-N+1$, certain blocks do not occur. The accordingly modified sets $\cK_k$ and the corresponding differences in the blocks that are present lead to modifications in the above operations.
\end{enumerate}
\end{remark}

Figure~\ref{fig:oneparticleop} shows the different cases in the implementation, including the blocks that need to be deleted in order to avoid irregularities.
Here $A^*A$ corresponds to the block operation \eqref{eq:AsAop}; $A^*_\ell$, $A_r$ and $S^+$ correspond to \eqref{eq:Asop}, \eqref{eq:Aop}, and \eqref{eq:Sop}, respectively; and $A^*_r$, $A_\ell$ and $S^-$ are the analogous operations with opposite shifts.
The figure assumes the generic block structure for $i,j \in \{N+1,\ldots,K-N\}$ and thus needs to be modified for the border cases mentioned in Remark \ref{rem:blockoptechnicalities}(ii).
\begin{figure}[t]
\begin{center}
\includegraphics[width=\textwidth]{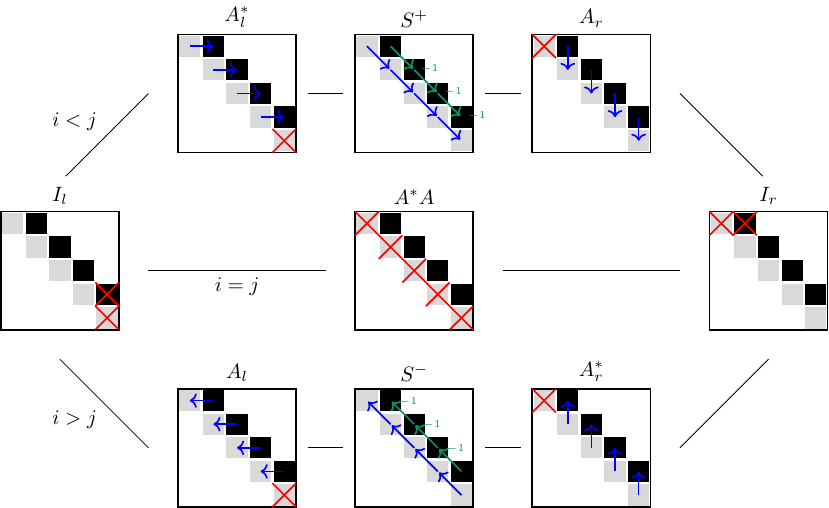}
\end{center}
\caption{An illustration of the matrix free operations for the one-particle operator.}
\label{fig:oneparticleop}
\end{figure}

In order to apply tensor representations of general operators of the form $\ten a_i^* \ten a_j$ with a result given in the same block structure, the components need to be replaced by block operations with particle number semantics as in Figure \ref{fig:oneparticleop}, depending on their position $k = 1,\ldots,K$:
\begin{align*}
A &\to \begin{cases}
A_l &\quad \text{if } k = j < i, \\
A_r &\quad \text{if } k = j > i,
\end{cases} & 
A^* &\to \begin{cases}
A_l^* &\quad \text{if } k = i < j, \\
A_r^* &\quad \text{if } k = i > j,
\end{cases} \\
I &\;\to\; \begin{cases}
I_l &\quad \text{if } k < \min(i,j), \\
I_r &\quad \text{if } k > \max(i,j),
\end{cases} & 
S &\to \begin{cases}
S^+ &\quad \text{if } i < k < j, \\
S^- &\quad \text{if } j < k < i.
\end{cases} 
\end{align*}
In addition, we have the $k$-independent replacement of $A^*A$ by the operation \eqref{eq:AsAop} and replace zero components by the operation $Z$ that assigns zero to all blocks. 

These operations can be performed efficiently by exchange, removal or sign changes of blocks in the tensor representation. One can proceed analogously for two-particle operators $\ten{a}_{i_1}^* \ten{a}_{i_2}^* \ten{a}_{j_1} \ten{a}_{j_2}$, as shown in Figure~\ref{fig:twoparticleop}, where again one needs to make appropriate adjustments for the border cases. In a similar manner, this can be generalized to interactions of three or more particles. 

\begin{figure}[t]
\begin{center}
\includegraphics[height=\textwidth,angle=90]{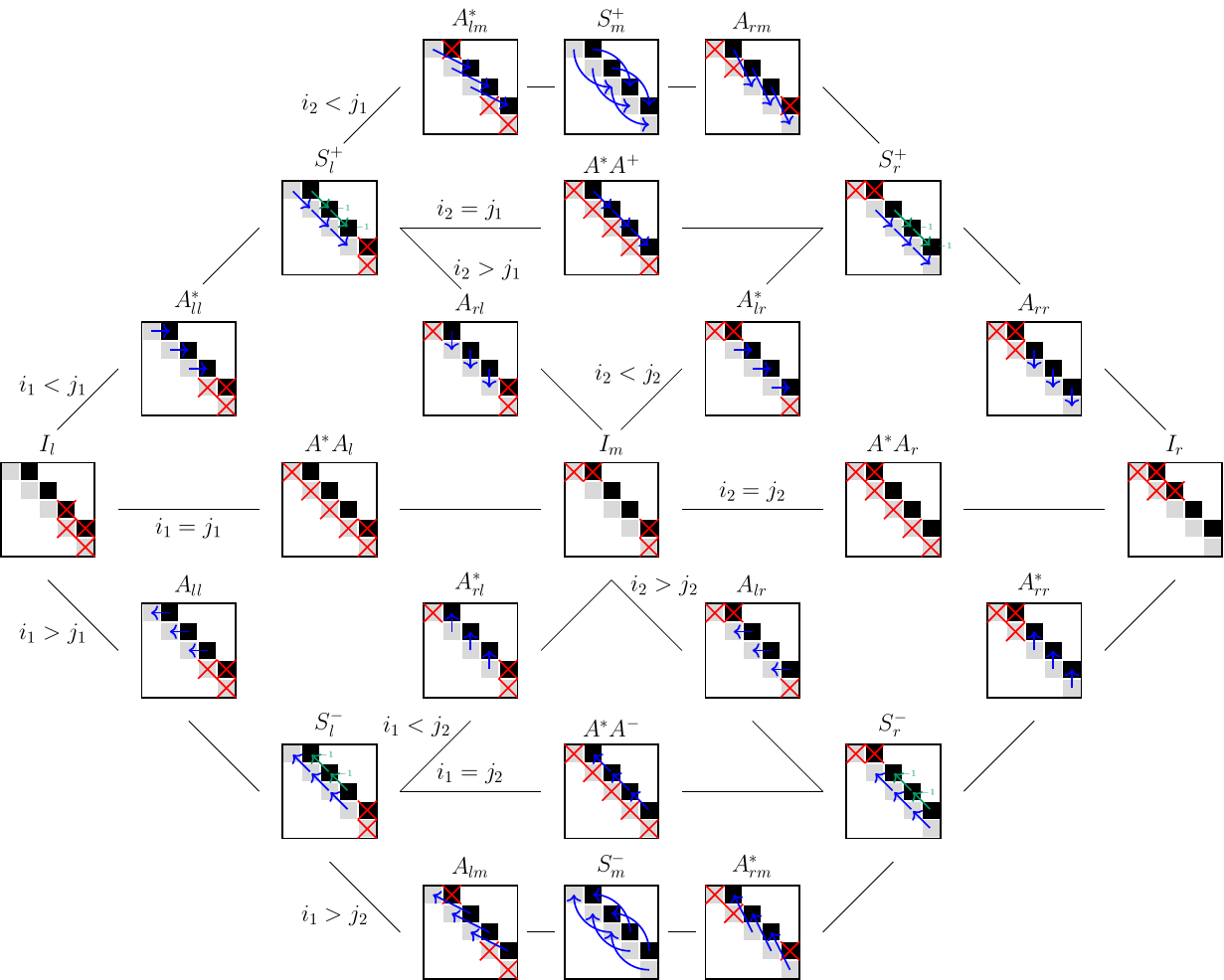}
\end{center}
\caption{An illustration of the matrix-free operations for the two-particle operator, assuming $i_1<i_2$ and $j_1 < j_2$ without loss of generality.}
\label{fig:twoparticleop}
\end{figure}

\subsection{{Automatic Rank Reduction}}

The particle number semantics of the block operations according to Figure \ref{fig:oneparticleop} and \ref{fig:twoparticleop} are compatible with forming linear combinations of operators. 
In particular, the full one- and two-particle operator representations constructed in Section \ref{ssc:compactform} can in the same manner be applied entirely in terms of block operations: replacing $A^* A$, $A^*$, $A$, $I$, $S$, and $0$ by the respective $k$-dependent block operations {according to Figures \ref{fig:oneparticleop}, \ref{fig:twoparticleop} again leads to a consistent representation, and its application directly produces the correct block structure. For this, let $\rep S_{\mathrm{b}}$ and $\rep D_{\mathrm{b}}$ be the resulting representations of $\ten S$ and $\ten D$ with particle number semantics.
\begin{prop}
Let $\ten x = \rmap{\rep X}$ be a block-sparse MPS representation, then $\rep U = \rep S_{\mathrm{b}} \bullet \rep X$ and $\rep V = \rep D_{\mathrm{b}} \bullet \rep X$ are block-sparse MPS representations with $\ten S \ten x = \rmap{\rep U}$, $\ten D \ten x = \rmap{\rep V}$.
\end{prop}
\begin{proof}
	By inspection of the proofs of Thm.\ \ref{thm:oneelecasMPO} and \ref{thm:twoelecasMPO}, one finds that each rank index in the MPO representations constructed there corresponds to precisely one case in Fig.~\ref{fig:oneparticleop} or \ref{fig:twoparticleop}, respectively. The  resulting representations operating on blocks thus directly yield a consistent block-sparse representation of the matrix-vector products.
\end{proof}}

{The addition of such symbolic MPO representations can be done analogously to the addition of MPS and block-sparse MPS. In each core, these symbolic representations are composed of scalar multiples of the elementary matrix-free block operations discussed above, where the corresponding scalars can be collected in a separate matrix of coefficients. We say that a collection of columns of cores in rank-wise representation of matrix-free operators are {\em linearly dependent} if they contain the same symbols but the coefficient matrix is rank-deficient. In this case, the operator ranks can be reduced, provided that the corresponding rows in the next component are {\em compatible}, meaning that entrywise, their symbols can only differ if all but one of them are the zero symbol $Z$, which in turn means that they can be added. The resulting algorithm can be performed from left to right and the procedure can be repeated from right to left, where linearly dependent rows can be merged, see Algorithm~\ref{alg:rankredop}. This automatically reduces the ranks of the sums of operators in the above symbolic representation. It can be very useful if the rank-reduced format for an operator is not known. In fact, as we show experimentally in Section \ref{ssc:opranktests}, automatic rank reduction can even improve upon the operator representation derived in Sec.~\ref{ssc:compactform}. 
\begin{example}\label{ex:symop}
Let $K = 5$. The operators $\ten a_2^* \ten a_2$, $\ten a_2^* \ten a_4$ and $\ten a_4^* \ten a_2$ can be represented with particle number semantics by
\begin{align*}
\ten a_2^* \ten a_2 &= [ I_l ] \SKP [ A^* A ] \SKP [ I_r ] \SKP [ I_r ] \SKP [ I_r ], \\
\ten a_2^* \ten a_4 &= [ I_l ] \SKP [ A^*_l ] \SKP [ S^+ ] \SKP [ A_r ] \SKP [ I_r ], \\
\ten a_4^* \ten a_2 &= [ I_l ] \SKP [ A_l ] \SKP [ S^- ] \SKP [ A^*_r ] \SKP [ I_r ].
\end{align*}
The sum of the operators is given by
\begin{multline*}
\ten a_2^* \ten a_2 + \ten a_2^* \ten a_4 + \ten a_4^* \ten a_2  =  \\ \begin{bmatrix}
I_l & I_l & I_l 
\end{bmatrix} \SKP \begin{bmatrix}
A^*A & Z & Z \\
Z & A^*_l & Z \\
Z & Z & A_r
\end{bmatrix} \SKP \begin{bmatrix}
I_r & Z & Z \\
Z & S^+ & Z \\
Z & Z & S^-
\end{bmatrix} \SKP \begin{bmatrix}
I_r & Z & Z \\
Z & A_r & Z \\
Z & Z & A^*_r
\end{bmatrix} \SKP \begin{bmatrix}
I_r \\
I_r \\
I_r
\end{bmatrix}.
\end{multline*}
After the rank reduction, the operator has the form
\begin{equation*}
\ten a_2^* \ten a_2 + \ten a_2^* \ten a_4 + \ten a_4^* \ten a_2 = \begin{bmatrix}
I_l 
\end{bmatrix} \SKP \begin{bmatrix}
A^*A & A^*_l & A_l
\end{bmatrix} \SKP \begin{bmatrix}
I_r & Z & Z \\
Z & S^+ & Z \\
Z & Z & S^-
\end{bmatrix} \SKP \begin{bmatrix}
I_r \\
A_r \\
A^*_r
\end{bmatrix} \SKP \begin{bmatrix}
I_r 
\end{bmatrix}.
\end{equation*}
\end{example}
}

\begin{algorithm}[t!]
\label{alg:rankredop}
\SetKwInOut{Input}{input}\SetKwInOut{Output}{output}
\Input{Matrix-free operator $\rep H$ with cores $\rep H_k$ that have coefficent matrices $G_k = (g^k_{i,j})_{i \in \{1,\ldots,r_k\}}^{j \in \{1,\ldots,r_{k+1}\}}, k = 1,\ldots,K$, with operator ranks $r_1,\ldots,r_{K-1}$.}
\Output{Rank-reduced matrix-free operator representation $\rep H$.}
\BlankLine

\For{$k = 1,2,\ldots,K-1$}{

Find $L$ sets $\mathcal L_\ell \subset \{1,\ldots,r_{k-1}\}$ s.t.\ the respective columns of $\rep H_k$ are linearly dependent and the respective rows of $\rep H_{k+1}$ are compatible\;

\For{$\ell = 1,\ldots,L$}{
Compute reduced QR factorization $(g^k_{i,j})_{i \in \{1,\ldots,r_k\}}^{j \in \mathcal L_\ell} = QR$\;
Set $(g^k_{i,j})_{i \in \{1,\ldots,r_k\}}^{j \in \mathcal L_\ell} \leftarrow Q$\;
Set $(g^{k+1}_{i,j})_{i \in \mathcal L_\ell}^{j \in \{1,\ldots,r_{k+2}\}} \leftarrow R \cdot (g^{k+1}_{i,j})_{i \in \mathcal L_\ell}^{j \in \{1,\ldots,r_{k+2}\}}$\;
Reduce columns of $\rep H_k$ and add columns of $\rep H_{k+1}$ accordingly\;
Update $r_k$\;
}
}
\Return{$\ten H = \rmap{\rep H}$}
\caption{Left-to-right rank reduction algorithm for matrix-free operators.}
\end{algorithm}

\section{Numerical Aspects}\label{sec:numer}

This chapter serves as an outlook on numerical solvers for the eigenvalue problem 
$
\ten H \ten x = \lambda \ten x
$ with the additional constraint $\ten P \ten x = N \ten x$ implemented by keeping $\ten x$ in block-sparse format.
We comment on standard iterative solvers and discuss their relation to this representation format. Furthermore, we give an example on the effect of enforcing block sparsity on the numerical stability of particle numbers with respect to TT-SVD truncation. Finally, we show that the ranks of the one- and two-particle operators, as discussed in Sec.~\ref{ssc:compactform}, are indeed near-optimal.

\subsection{Iterative Methods with Fixed and Variable Ranks}\label{sec:itermethods}
A standard method for the computation with MPS is the DMRG algorithm. All modern implementations of this method (see, for instance, \cite{itensor,tensornetwork,tenpy,pytenet}) exploit the block sparsity in some form. For the sake of completeness, we give a brief overview of both the one-site and the two-site DMRG. We then turn to methods using global eigenvalue residuals. These methods are nonstandard in physical computations, but may become competitive when block sparsity is taken into account. A detailed numerical comparison of the methods will be subject of further research.

\subsubsection{One-site DMRG/ALS}
The one-site DMRG or ALS algorithm \cite{Holtz:12} optimizes one component of the MPS $\ten x$ at a time. With the appropriate orthogonalization, each subiteration consists of an optimization step on the linear part of the fixed-rank manifold, which coincides with its own tangent space. As such, the one-site DMRG can be formulated as a tangent space prodedure: Let $\ten x_{k,\ell}$ be the current iterate after $\ell$ sweeps and the $k$-th subiteration. That is, we have previously optimized the $k$-th component and orthogononalized accordingly. Now, we optimize the $(k+1)$-st component by minimizing the energy
\[
\ten E_{k,\ell}(\ten x_{k+1,\ell}) = {\frac{\langle \ten x_{k+1,\ell},\ten Q_{\ten x_{k,\ell}}^{k+1,1}\ten H \ten Q_{\ten x_{k,\ell}}^{k+1,1}\ten x_{k+1,\ell} \rangle}{\langle \ten x_{k+1,\ell}, \ten x_{k+1,\ell}\rangle}}.
\] 
If $k = K$, we can go back to $k = 1$ or do the sweep in reverse. We note that $\ten Q_{\ten x_{k,\ell}}^{k+1,1}$ is exactly the projection onto the part of the tangent space at $\ten x_{k,\ell}$ that corresponds to the $(k+1)$-st component. If $\ten x_{k,\ell}$ is an eigenvector of the particle number operator $\ten P$, then by Corollary~\ref{cor:tangprok}, $\ten Q_{\ten x_{k,\ell}}^{k+1,1}$ commutes with $\ten P$. By Lemma~\ref{lemma:hamilpncommute}, so does the Hamiltonian $\ten H$. Thus, the next iterate $\ten x_{k+1,\ell}$ will be in the same eigenspace of $\ten P$. Therefore, if one initializes the one-site DMRG algorithm with a block-sparse MPS of fixed particle number, then the block sparsity will be preserved for each iterate and the algorithm can be performed by operating only on the nonzero blocks.

\subsubsection{Two-site DMRG}
The classical (two-site) DMRG \cite{white,Holtz:12} optimizes two neighboring components at once. This allows for a certain rank-adaptivity in between these components. While this gives the algorithm more flexibility, it also means that the subiterates can leave the fixed-rank manifold and even the tangent space. Nevertheless, we can show that the particle number will be preserved. To this end, we define the operation $\tilde{ \ten Q}_{\ten x}^{k,1}$ for $k=1,\ldots,K-1$ similarly  to $ \ten Q_{\ten x}^{k,1}$ by 
\begin{equation*}
\tilde{\ten Q}_{\ten x}^{k,1} = \biggl(\sum_{j_{k-1}=1}^{r_{k-1}}\rmapless{k}{j_{k-1}}{\rep{U}}\, \langle \rmapless{k}{j_{k-1}}{\rep{U}},\, \cdot\, \rangle \biggr)
\otimes I \otimes I \otimes \biggl(\sum_{j_{k+1}=1}^{r_{k+1}}\rmapgtr{k+1}{j_{k+1}}{\rep{V}}\,\langle\rmapgtr{k+1}{j_{k+1}}{\rep{V}},\,\cdot\,\rangle \biggr).
\end{equation*}
As in Corollary~\ref{cor:tangprok}, it can be shown that $\tilde{\ten Q}_{\ten x}^{k,1}$ and $\ten P$ commute. Thus, with the same argument as above, if the first iterate is an eigenvector of $\ten P$, then all iterates are in the same eigenspace.

\subsubsection{(Preconditioned) Gradient Descent}
An alternative to the DMRG algorithm are methods operating \emph{globally} on the MPS representation, such as (preconditioned) gradient descent or more involved variants such as LOBPCG \cite{Kressner:11}. For basic gradient descent, one can control the ranks by defining a threshold $\epsilon > 0$ and performing the update scheme
\[
\ten x_{\ell+1} = \operatorname{trunc}_\epsilon\left(\ten x_\ell - \alpha_\ell \left(\ten H\ten x_m - \frac{\langle \ten x_\ell, \ten H \ten x_\ell \rangle}{\langle \ten x_\ell,\ten x_\ell \rangle}\ten x_\ell\right)\right).
\]
Since all involved steps preserve the particle number, this scheme produces a sequence $\ten x_\ell$ with the same particle number if the initial value $\ten x_0$ has a fixed particle number. Convergence can be accelerated by using an optimized step size $\alpha_\ell$ or by preconditioning the system \cite{Rohwedder:11}. 

\subsubsection{Riemannian Gradient Descent}
One could also consider Riemannian methods, where the gradient is projected first onto the tangent space and the step is performed on the fixed-rank manifold \cite{Kressner_SV_2013}. Generalizations are possible that allow for rank adaptivity. This method is often used because the ranks can be fixed and because the projected gradient in the tangent space can be stated explicitly and compactly, thus reducing computational overhead. We construct a sequence $\ten x_\ell$ from an initial value $\ten x_0$ with initial rank $r$. If $\ten x_0$ has a fixed particle number, then so does the entire sequence
\[
\ten x_{\ell+1} = \operatorname{trunc}_r \left(\ten x_\ell - \alpha_\ell \ten Q_{\ten x_\ell}\left(\ten H\ten x_\ell - \frac{\langle \ten x_\ell, \ten H \ten x_\ell\rangle}{\langle \ten x_\ell,\ten x_\ell\rangle}\ten x_\ell\right)\right),
\]
where $\alpha_\ell$ is the step size.  In \cite{steinlechner_riemannian_2016}, it is shown that the truncation to fixed rank is a retraction, and thus the stated scheme can be regarded as a Riemannian optimization method. These methods can be accelerated by typical techniques for gradient descent, such as nonlinear conjugate gradient descent, see \cite{Absil2008}.

\subsection{Blocks of Zero Size}
We usually assume a tensor $\ten x$ to be represented with minimal ranks; otherwise, we can perform a TT-SVD truncation with a given error threshold or to a fixed multilinear rank as in Algorithm~\ref{alg:svdthresh}. This means that in the block-sparse format, all blocks that contain only zeros will be actually set to size zero, which has several implications.

First of all, as already mentioned in Sec.~\ref{sec:blockstructure}, we stress that truncating the ranks of a tensor $\ten x$ to a fixed multilinear rank $r_1,\ldots,r_{K-1}$ can lead to the tensor being set to zero, that is, $\rmap{\rep{\operatorname{trunc}_{r_1,\ldots,r_K}(\rep X)}} = 0$. The block-sparse format allows for a deeper understanding of this fact: Setting a block to zero can lead to the tensor as a whole being set to zero, if all other nonzero blocks depend on it. 

Furthermore, the question arises whether the above iterative methods can recover a block after it has been momentarily set to zero during an iteration step. We know that the one-site DMRG is not rank-adaptive and the block sizes are fixed during the iteration. In the other three methods it is possible to increase the rank based on some threshold (in the Riemannian case this can be achieved by modification of the retraction onto the manifold).  

If $\rho_{k,n} = 0$ for some $k$ and $n$, then there exists a basis element $\ten e^\alpha$ of the eigenspace of the particle number operator with eigenvalue $N$ (that is, $\ten P \ten{e}^\alpha = N \ten{e}^\alpha$), such that $\langle \ten{e}^\alpha, \ten x\rangle = 0$. Then we have $\ten Q_{\ten x}^{k,1} \ten{e}^\alpha = 0$, since $\rmapless{k}{j}{\rep U}$ is not present for $j\in\cS_{k,n} = \emptyset$. A similar argument can be made for $\ten Q_{\ten x}^{k,2}$. This means that in Riemannian gradient descent, once $\rho_{k,n} = 0$ in some iterative step, then also $\rho_{k,n} = 0$ for all subsequent steps. One can overcome this problem by choosing the initial point $\ten x_0$ in such a way that all $\rho_{k,n}$ are at least 1. However, when retracting back onto the manifold, care needs to be taken that blocks are not set to zero. 

For the two-site DMRG, a similar argument implies that some blocks can be created in each substep, depending on neighboring block sizes. A thorough analysis shows that the rank adaptivity of the two-site DMRG is always local and thus not all points can necessarily be reached from a given starting point $\ten x_0$. However, if we start from a generic point (with some block sizes possibly zero), we can expect a favorable behavior of the method.

The general gradient case is in this regard the most versatile as it has the fewest restrictions on the update step. Starting in $\ten x_0 \neq 0$ will allow us to optimize on the whole linear space of fixed particle number $N$ throughout the procedure. 
A more detailed investigation will be given elsewhere.

\subsection{Numerical Stability of Rounding}\label{ssc:rounding}
\pgfplotsset{compat=1.5}
\begin{figure}[t]\label{fig:svdrounding}
	\centering
\begin{tikzpicture}

\begin{loglogaxis}[
log basis x={10},
tick align=outside,
tick pos=left,
grid=both,
xlabel={$| \sigma_6 - \sigma_7 |$},
xmin=9e-16, xmax=1.8,
xmode=log,
xtick style={color=black},
xtick={1e-1,1e-3,1e-5,1e-7,1e-9,1e-11,1e-13},
ylabel={$| \langle \boldsymbol{x}_{6,\epsilon}, \boldsymbol{P} \boldsymbol{x}_{6,\epsilon} \rangle / \langle \boldsymbol{x}_{6,\epsilon}, \boldsymbol{x}_{6,\epsilon} \rangle - N |$},
ymode=log,
xlabel near ticks,
ylabel near ticks,
ytick style={color=black},
ytick={1e-2,1e-8,1e-14},
width=.95\textwidth,
height=5cm
]
\addplot [semithick, red, mark size=2, mark options={solid}, only marks]
table {%
0.9999999999999996 1.7763568394002505e-15
0.49999999999999867 1e-16
0.24999999999999623 8.881784197001252e-16
0.12499999999999911 1e-16
0.06250000000000111 8.881784197001252e-16
0.03125000000000111 8.881784197001252e-16
0.015625000000000666 8.881784197001252e-16
0.007812500000000666 8.881784197001252e-16
0.00390625000000111 1e-16
0.001953124999995115 1e-16
0.0009765625000022204 1.7763568394002505e-15
0.0004882812499991118 8.881784197001252e-16
0.00024414062499666933 8.881784197001252e-16
0.0001220703125024425 8.881784197001252e-16
6.103515625177636e-5 8.881784197001252e-16
3.051757812544409e-5 1e-16
1.5258789061611822e-5 1e-16
7.629394530805911e-6 8.881784197001252e-16
3.814697263848643e-6 1e-16
1.907348631702277e-6 1e-16
9.536743210691867e-7 8.881784197001252e-16
4.768371577590358e-7 8.881784197001252e-16
2.384185788795179e-7 8.881784197001252e-16
1.1920928799646902e-7 8.881784197001252e-16
5.9604641666766156e-8 8.881784197001252e-16
2.9802319723160053e-8 8.881784197001252e-16
1.4901162526115286e-8 8.881784197001252e-16
7.4505797087454084e-9 8.881784197001252e-16
3.7252894102834944e-9 7.460698725481052e-14
1.862645593320167e-9 3.552713678800501e-15
9.313201321248243e-10 1.1546319456101628e-14
4.656588448170851e-10 8.881784197001252e-16
2.3282997752005485e-10 1.2434497875801753e-13
1.1641643204995944e-10 3.099742684753437e-13
5.820544046741816e-11 1.3001155707570433e-11
2.9104718635153404e-11 1.6564527527407336e-12
1.4552137272971777e-11 1.2997691811733603e-10
7.277067837208051e-12 2.984279490192421e-13
3.639311074721263e-12 1.3044683555563097e-8
1.815658734471981e-12 3.887793198487088e-9
9.097167463778533e-13 1.9279582375020254e-9
4.549693954913892e-13 1.7877441749192258e-9
2.2737367544323206e-13 1.2554517425655831e-9
1.1080025785759062e-13 2.9866455442117967e-7
5.906386491005833e-14 1.072299599069737e-5
2.731148640577885e-14 4.7347222054128224e-7
1.2212453270876722e-14 0.001637055353477912
9.547918011776346e-15 0.0009129640952592055
2.4424906541753444e-15 0.001607962374475136
2.6645352591003757e-15 0.00288903503933291
};
\addplot [semithick, blue, mark size=2, mark options={solid}, only marks]
table {%
0.9999999999999996 8.881784197001252e-16
0.49999999999999956 8.881784197001252e-16
0.24999999999999933 1e-16
0.12499999999999956 1e-16
0.062499999999999334 1.7763568394002505e-15
0.031249999999999334 1e-16
0.015625 8.881784197001252e-16
0.0078125 8.881784197001252e-16
0.003906249999999334 1.7763568394002505e-15
0.0019531249999980016 8.881784197001252e-16
0.0009765624999984457 1e-16
0.0004882812499995559 1e-16
0.00024414062500066613 8.881784197001252e-16
0.00012207031249844569 8.881784197001252e-16
6.103515625066613e-5 8.881784197001252e-16
3.0517578123445688e-5 1e-16
1.5258789061167732e-5 8.881784197001252e-16
7.629394530583866e-6 8.881784197001252e-16
3.8146972654029554e-6 1.7763568394002505e-15
1.9073486314802324e-6 1e-16
9.536743161842054e-7 1.7763568394002505e-15
4.768371573149466e-7 8.881784197001252e-16
2.3841857843542869e-7 1.7763568394002505e-15
1.1920928866260283e-7 1.7763568394002505e-15
5.960464477539063e-8 1e-16
2.980232260973992e-8 8.881784197001252e-16
1.4901160749758446e-8 8.881784197001252e-16
7.4505794867008035e-9 3.552713678800501e-15
3.725290298461914e-9 1e-16
1.862645593320167e-9 8.881784197001252e-16
9.31321020303244e-10 8.881784197001252e-16
4.656606211739245e-10 1.7763568394002505e-15
2.32829533430845e-10 8.881784197001252e-16
1.1641487773772496e-10 1e-16
5.820655069044278e-11 1e-16
2.910272023370908e-11 1e-16
1.4551693183761927e-11 8.881784197001252e-16
7.274625346553876e-12 8.881784197001252e-16
3.637534717881863e-12 8.881784197001252e-16
1.8174350913113813e-12 1e-16
9.08162434143378e-13 1e-16
4.556355293061642e-13 1e-16
2.262634524186069e-13 1e-16
1.13464793116691e-13 1e-16
5.639932965095795e-14 1.7763568394002505e-15
2.842170943040401e-14 8.881784197001252e-16
1.354472090042691e-14 1e-16
6.661338147750939e-15 1e-16
3.774758283725532e-15 1.7763568394002505e-15
1.3322676295501878e-15 8.881784197001252e-16
};
\legend{full MPS,block-sparse MPS}
\end{loglogaxis}
\end{tikzpicture}
\caption{Rayleigh quotient of ${\ten x}_{6,\epsilon}$ after rounding from rank $7$ to rank $6$. Plots the difference of the smallest two singular values over the error in the Rayleigh quotient.}
\end{figure}
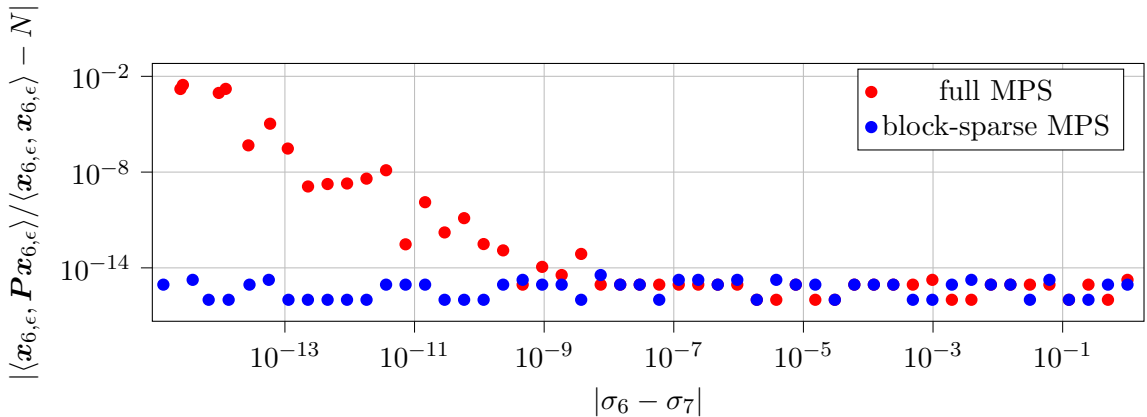

As mentioned in Remark~\ref{rem:ttsvd}, if the singular values in each matricization are distinct, the TT-SVD in Algorithm~\ref{alg:svdthresh} is unique up to the signs of singular vectors. Therefore, performing a TT-SVD on a tensor with fixed particle number will automatically result in a (reordered) block-sparse format. However, when two singular values coincide, there is an additional rotational freedom in the corresponding subspaces that needs to be factored out in order to enforce block sparsity.

This fact has an important implication on the SVD truncation of a tensor of fixed particle number that is {\em not} in block-sparse format. If the TT-SVD is unique, the SVD truncation will result in a tensor of the same fixed particle number (or the zero tensor, which is still an element of the same eigenspace). If two or more singular values are equal, a general SVD truncation can destroy the natural block structure, as it could remove parts of two blocks simultaneously. Numerically, this already occurs when two singular values are close to each other, since singular vectors become increasingly ill-conditioned with decreasing difference of the corresponding singular values, resulting in numerical errors in the particle number upon truncation. We emphasize that if the block-sparse structure is enforced, this is ruled out.

To illustrate this numerical issue, we conduct the following artificial experiment: Let $K=20$ and $N=6$. We pick a tensor with blocks of size $1$, left-orthogonal components $U_1,\ldots,U_{10}$, and right-orthogonal components $V_{11},\ldots,V_{20}$. This tensor has rank at most $7$ because there can be no more than $7$ blocks of size $1$ on the unoccupied or occupied layer, respectively. For $\epsilon \geq 0$, we choose a diagonal matrix of singular values
\[
\Sigma = \diag(
\sigma_1, \sigma_2, \ldots, \sigma_7) = 
\diag(6,5,4,3,2,1,1 - \epsilon)
\]
and construct the tensor
\[
\ten x_\epsilon = U_1 \SKP \dots \SKP U_{10} \Sigma \SKP V_{11} \SKP \ldots \SKP V_{20},
\]
which has the singular values $\sigma_1,\ldots,\sigma_7$ between its middle components $U_{10}$ and $V_{11}$. When $\epsilon = 0$, truncating the last singular value in full MPS format will therefore in general lead to a deviation from the particle number $N=6$. This effect can also be observed when the smallest singular values are only roughly equal. We consider the Rayleigh quotient of $\ten x_{6,\epsilon} = \operatorname{trunc}_{6,\ldots,6}(\ten x_\epsilon)$ with the particle number operator for $\epsilon \rightarrow 0$, which directly translates to the difference of the smallest singular values $| \sigma_6 - \sigma_7 | = \epsilon$. This is shown in Fig.~\ref{fig:svdrounding}.

Ideally, this Rayleigh quotient should be constant $N=6$. This is the case for $\epsilon > 10^{-8}$. However, as we can see if we keep the MPS in its full format, for small differences in the singular values, the Rayleigh quotient deviates and the natural block structure is destroyed. This is due to the ill-conditioning of singular vectors in the TT-SVD when the smallest singular values of $\ten x_\epsilon$ are close. If we keep the tensor in block-sparse format, as expected, this problem cannot occur.

\subsection{Operator Ranks}
\label{ssc:opranktests}

\begin{table}[t]\label{tab:opranks}
\centering
\begin{tabu}{c|c|c}
\toprule
$K$ & {\bf rank reduced representation} & {\bf extra compression} \\
\midrule
\multicolumn{2}{l}{\bf Ranks of one-particle operator} \\
\midrule
8 & $(4,6,8,10,8,6,4)$ & $(4,6,8,10,8,6,4)$ \\
16 & $(4,6,8,\ldots,14,16,18,16,14,\ldots,8,6,4)$ & $(4,6,8,\ldots,14,16,18,16,14,\ldots,8,6,4)$\\
24 & $(4,6,8,\ldots,22,24,26,24,22,\ldots,8,6,4)$ & $(4,6,8,\ldots,22,24,26,24,22,\ldots,8,6,4)$\\
32 & $(4,6,8,\ldots,30,32,34,32,30,\ldots,8,6,4)$ & $(4,6,8,\ldots,30,32,34,32,30,\ldots,8,6,4)$\\
\midrule
\multicolumn{2}{l}{\bf Ranks of two-particle operator} \\
\midrule
8 & $(4,24,33,46,33,24,4)$ & $(4,\mathbf{16},33,46,33,\mathbf{16},4)$ \\
16 & $(4, 40, 49, \ldots, 125, 154, 125, \ldots, 49, 40, 4)$ & $(4, \mathbf{16}, \mathbf{41}, \ldots, 125, 154, 125, \ldots, \mathbf{41}, \mathbf{16}, 4)$ \\
24 & $(4,56,65,\ldots,281,326,281,\ldots,65,56,4)$ & $(4,\mathbf{16},\mathbf{41},\ldots,281,326,281,\ldots,\mathbf{41},\mathbf{16},4)$ \\
32 & $(4,72,81,94,\ldots,562,\ldots,94,81,72,4)$ & $(4,\mathbf{16},\mathbf{41},\mathbf{86},\ldots,562,\ldots,\mathbf{86},\mathbf{41},\mathbf{16},4)$\\
\bottomrule 
\end{tabu}
\vspace{4pt}
\caption{Ranks of the one- and two-particle operators for different numbers of orbitals $K$. The left column shows the ranks of the rank reduced representation, the right column shows the ranks after an extra rank compression sweep back and forth.}
\end{table}

We have discussed in Sec.~\ref{ssc:compactform} that the one- and two-particle operators $\ten S$ and $\ten D$ can be explicitly stated in rank-compressed format. Here, we numerically show that these representations indeed have near-optimal ranks, in the sense that the rank compression procedure for operators that is outlined in Sec.~\ref{ssc:matfree} can reduce the ranks only for the border cases at the beginning and end of the MPO chain.

Table~\ref{tab:opranks} shows the ranks of the two operators for different numbers of orbitals $K$ and normally distributed random values for $T$ and $V$. In the left column, we see the ranks of these operators when they are represented in the rank-reduced form discussed in Sec.~\ref{ssc:compactform}. In the right column, we have performed an extra rank compression from left to right and one from right to left, as discussed in Sec.~\ref{ssc:matfree}. One can see that the ranks in the left column are almost optimal. Only for the border cases of the two-particle operator, the ranks can be reduced further.

Finally, for $K=32$, we apply these operators $\ten S$ and $\ten D$ to a random MPS of rank 1 that is not in the block-sparse format. The entries in the components of this tensor are chosen to be $\mathcal N(0,1)$-distributed and the tensor is then normalized. The operators are in the rank-reduced format but they are applied explicitly and not as their matrix-free versions as in Sec.~\ref{ssc:matfree}, since this is applicable only to tensors in block-sparse format.

Fig.~\ref{fig:solranks} shows the ranks of the output tensor for the two operators (top and bottom) and different truncation parameters $\varepsilon$. One can see that with minimal truncation, the ranks of the output are about the same as the ranks of the operators. However, the output ranks can be further reduced by about a factor $2$ if one is willing to accept an error threshold of $\varepsilon = 10^{-12}$. A more substantial truncation does not reduce the ranks further, indicating that the ranks of the two operators are indeed linear and quadradic in $K$, respectively.

\begin{figure}[t]\label{fig:solranks}
\centering
\begin{tikzpicture}
\begin{axis}[
footnotesize,
ybar={.5pt},
bar width = 1pt,
tick align=outside,
tick pos=left,
grid=both,
xlabel={$k$},
xmin=0, xmax=32,
ymin=0, ymax=40,
xtick style={color=black},
xtick={1,11,21,31},
xlabel near ticks,
ylabel near ticks,
ytick={0,10,20,30,40},
ylabel={$r_k$},
ytick style={color=black},
width=\textwidth,
height=4.7cm,
legend columns=2, 
legend style={/tikz/column 2/.style={column sep=5pt,},},
]
\addplot [draw=red,fill=red]
table {%
  1   2
  2   4
  3   8
  4  10
  5  12
  6  14
  7  16
  8  17
  9  20
 10  22
 11  24
 12  26
 13  28
 14  30
 15  32
 16  34
 17  32
 18  30
 19  28
 20  26
 21  23
 22  22
 23  20
 24  18
 25  16
 26  14
 27  12
 28   9
 29   7
 30   4
 31   2
};
\addplot [draw=red!80!blue,fill=red!80!blue]
table {%
  1   2
  2   4
  3   5
  4   7
  5   9
  6  10
  7  11
  8  12
  9  13
 10  15
 11  19
 12  26
 13  28
 14  23
 15  19
 16  18
 17  19
 18  20
 19  23
 20  26
 21  16
 22  16
 23  14
 24  12
 25  11
 26   8
 27   7
 28   6
 29   5
 30   4
 31   2
};
\addplot [draw=red!60!blue,fill=red!60!blue]
table {%
  1   2
  2   4
  3   5
  4   6
  5   7
  6   8
  7   9
  8  10
  9  11
 10  12
 11  13
 12  14
 13  15
 14  16
 15  17
 16  18
 17  17
 18  16
 19  15
 20  14
 21  13
 22  12
 23  11
 24  10
 25   9
 26   8
 27   7
 28   6
 29   5
 30   4
 31   2
};
\addplot [draw=red!40!blue,fill=red!40!blue]
table {%
  1   2
  2   4
  3   5
  4   6
  5   7
  6   8
  7   9
  8  10
  9  11
 10  12
 11  13
 12  14
 13  15
 14  16
 15  17
 16  18
 17  17
 18  16
 19  15
 20  14
 21  13
 22  12
 23  11
 24  10
 25   9
 26   8
 27   7
 28   6
 29   5
 30   4
 31   2
};
\addplot [draw=red!20!blue,fill=red!20!blue]
table {%
  1   2
  2   4
  3   5
  4   6
  5   7
  6   8
  7   9
  8  10
  9  11
 10  12
 11  13
 12  14
 13  15
 14  16
 15  17
 16  18
 17  17
 18  16
 19  15
 20  14
 21  13
 22  12
 23  11
 24  10
 25   9
 26   8
 27   7
 28   6
 29   5
 30   4
 31   2
};
\addplot [draw=blue,fill=blue]
table {%
  1   2
  2   4
  3   5
  4   6
  5   7
  6   8
  7   9
  8  10
  9  11
 10  12
 11  13
 12  14
 13  15
 14  16
 15  17
 16  18
 17  17
 18  16
 19  15
 20  14
 21  13
 22  12
 23  11
 24  10
 25   9
 26   8
 27   7
 28   6
 29   5
 30   4
 31   2
};
\legend{$\varepsilon=10^{-16}$,$\varepsilon=10^{-14}$,$\varepsilon=10^{-12}$,$\varepsilon=10^{-10}$,$\varepsilon=10^{-8}$,$\varepsilon=10^{-6}$}
\end{axis}
\end{tikzpicture}

\begin{tikzpicture}
\begin{axis}[
footnotesize,
ybar={.5pt},
bar width = 1pt,
tick align=outside,
tick pos=left,
grid=both,
xlabel={$k$},
xmin=0, xmax=32,
ymin=0, ymax=600,
xtick style={color=black},
xtick={1,11,21,31},
xlabel near ticks,
ylabel near ticks,
ytick={0,200,400,600},
ylabel={$r_k$},
ytick style={color=black},
width=\textwidth,
height=4.7cm,
legend columns=2, 
legend style={/tikz/column 2/.style={column sep=5pt,},},
]
\addplot [draw=red,fill=red]
table {%
  1    2
  2    4
  3    8
  4   16
  5   32
  6   64
  7  128
  8  186
  9  219
 10  256
 11  297
 12  342
 13  391
 14  444
 15  501
 16  562
 17  501
 18  444
 19  391
 20  342
 21  297
 22  256
 23  219
 24  186
 25  128
 26   64
 27   32
 28   16
 29    8
 30    4
 31    2
};
\addplot [draw=red!80!blue,fill=red!80!blue]
table {%
  1    2
  2    4
  3    8
  4   16
  5   32
  6   64
  7  128
  8  186
  9  219
 10  255
 11  296
 12  341
 13  389
 14  443
 15  500
 16  561
 17  498
 18  438
 19  391
 20  342
 21  297
 22  256
 23  219
 24  186
 25  128
 26   64
 27   32
 28   16
 29    8
 30    4
 31    2
};
\addplot [draw=red!60!blue,fill=red!60!blue]
table {%
  1    2
  2    4
  3    8
  4   16
  5   31
  6   53
  7   70
  8   83
  9   98
 10  115
 11  134
 12  155
 13  178
 14  203
 15  230
 16  259
 17  230
 18  203
 19  178
 20  155
 21  134
 22  115
 23   98
 24   83
 25   70
 26   53
 27   31
 28   16
 29    8
 30    4
 31    2
};
\addplot [draw=red!40!blue,fill=red!40!blue]
table {%
  1    2
  2    4
  3    8
  4   16
  5   31
  6   53
  7   70
  8   83
  9   98
 10  115
 11  134
 12  155
 13  178
 14  203
 15  230
 16  259
 17  230
 18  203
 19  178
 20  155
 21  134
 22  115
 23   98
 24   83
 25   70
 26   53
 27   31
 28   16
 29    8
 30    4
 31    2
};
\addplot [draw=red!20!blue,fill=red!20!blue]
table {%
  1    2
  2    4
  3    8
  4   16
  5   31
  6   53
  7   70
  8   83
  9   98
 10  115
 11  134
 12  155
 13  178
 14  203
 15  230
 16  259
 17  230
 18  203
 19  178
 20  155
 21  134
 22  115
 23   98
 24   83
 25   70
 26   53
 27   31
 28   16
 29    8
 30    4
 31    2
};
\addplot [draw=blue,fill=blue]
table {%
  1    2
  2    4
  3    8
  4   16
  5   31
  6   53
  7   70
  8   83
  9   98
 10  115
 11  134
 12  155
 13  178
 14  203
 15  230
 16  259
 17  230
 18  203
 19  178
 20  155
 21  134
 22  115
 23   98
 24   83
 25   70
 26   53
 27   31
 28   16
 29    8
 30    4
 31    2
};
\legend{$\varepsilon=10^{-16}$,$\varepsilon=10^{-14}$,$\varepsilon=10^{-12}$,$\varepsilon=10^{-10}$,$\varepsilon=10^{-8}$,$\varepsilon=10^{-6}$}
\end{axis}
\end{tikzpicture}
\caption{Ranks of the output after the one-particle operator (top) and the two-particle operator (bottom) have been applied to a random MPS of rank 1 in full format, $K = 32$. The ranks are shown after a TT-SVD trunctation with various values for $\varepsilon$. The gradient from red to blue indicates a more substantial truncation, resulting in lower ranks.}
\end{figure}
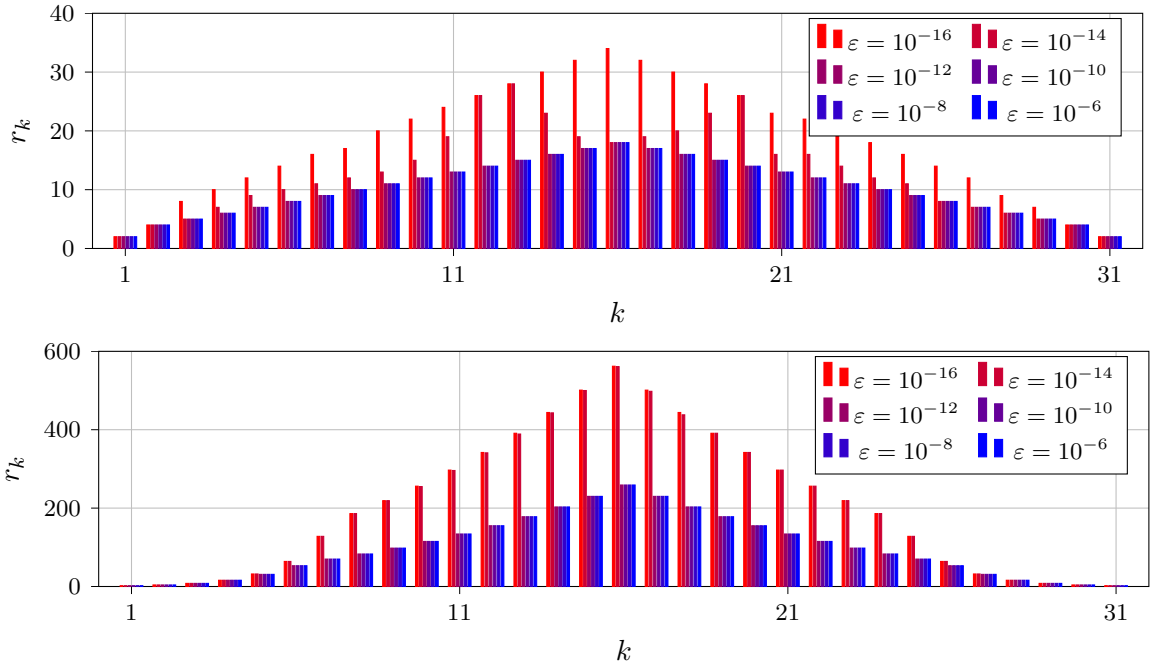

\bibliographystyle{amsplain}
\bibliography{BGPblockmps_main}

\providecommand{\bysame}{\leavevmode\hbox to3em{\hrulefill}\thinspace}
\providecommand{\MR}{\relax\ifhmode\unskip\space\fi MR }
% \MRhref is called by the amsart/book/proc definition of \MR.
\providecommand{\MRhref}[2]{%
  \href{http://www.ams.org/mathscinet-getitem?mr=#1}{#2}
}
\providecommand{\href}[2]{#2}
\begin{thebibliography}{10}

\bibitem{Absil2008}
Pierre-Antoine Absil, Robert Mahony, and Rodolphe Sepulchre, \emph{Optimization
  algorithms on matrix manifolds}, Princeton University Press, Princeton, NJ,
  2008.

\bibitem{BCD}
Markus Bachmayr, Albert Cohen, and Wolfgang Dahmen, \emph{Parametric {PDE}s:
  {S}parse or low-rank approximations?}, IMA Journal of Numerical Analysis
  \textbf{38} (2018), 1661--1708.

\bibitem{BK:20}
Markus Bachmayr and Vladimir Kazeev, \emph{Stability of low-rank tensor
  representations and structured multilevel preconditioning for elliptic
  {PDE}s}, Foundations of Computational Mathematics \textbf{20} (2020),
  1175--1236.

\bibitem{BSU:16}
Markus Bachmayr, Reinhold Schneider, and Andr{\'e} Uschmajew, \emph{Tensor
  networks and hierarchical tensors for the solution of high-dimensional
  partial differential equations}, Foundations of Computational Mathematics
  \textbf{16} (2016), no.~6, 1423--1472.

\bibitem{bauer2011implementing}
Bela Bauer, Philippe Corboz, Rom{\'a}n Or{\'u}s, and Matthias Troyer,
  \emph{Implementing global {A}belian symmetries in projected entangled-pair
  state algorithms}, Physical Review B \textbf{83} (2011), no.~12, 125106.

\bibitem{chan_matrix_2016}
Garnet Kin-Lic Chan, Anna Keselman, Naoki Nakatani, Zhendong Li, and Steven~R.
  White, \emph{Matrix product operators, matrix product states, and ab initio
  density matrix renormalization group algorithms}, The Journal of Chemical
  Physics \textbf{145} (2016), no.~1, 014102 (en).

\bibitem{crosswhite2008finite}
Gregory~M Crosswhite and Dave Bacon, \emph{Finite automata for caching in
  matrix product algorithms}, Physical Review A \textbf{78} (2008), no.~1,
  012356.

\bibitem{Daley:04}
Andrew~John Daley, Corinna Kollath, Ulrich Schollw{\"o}ck, and Guifr{\'e}
  Vidal, \emph{Time-dependent density-matrix renormalization-group using
  adaptive effective {H}ilbert spaces}, Journal of Statistical Mechanics:
  Theory and Experiment \textbf{2004} (2004), no.~04, P04005.

\bibitem{Dolfietal:12}
Michele Dolfi, Bela Bauer, Matthias Troyer, and Zoran Ristivojevic,
  \emph{Multigrid algorithms for tensor network states}, Physical review
  letters \textbf{109} (2012), no.~2, 020604.

\bibitem{dolgov_tensor_nodate}
Sergey Dolgov, Dante Kalise, and Karl Kunisch, \emph{Tensor decompositions for
  high-dimensional {H}amilton-{J}acobi-{B}ellman equations}, 24.

\bibitem{dolgov2013two}
Sergey Dolgov and Boris Khoromskij, \emph{Two-level {QTT}-{T}ucker format for
  optimized tensor calculus}, SIAM Journal on Matrix Analysis and Applications
  \textbf{34} (2013), no.~2, 593--623.

\bibitem{EPS}
Martin Eigel, Max Pfeffer, and Reinhold Schneider, \emph{Adaptive stochastic
  {G}alerkin {FEM} with hierarchical tensor representations}, Numer. Math.
  \textbf{136} (2017), no.~3, 765--803.

\bibitem{itensor}
Matthew Fishman, Steven~R. White, and E.~Miles Stoudenmire, \emph{The
  \mbox{ITensor} software library for tensor network calculations},
  arXiv:2007.14822, 2020.

\bibitem{Grasedyck:2010:HierarchicalSVD}
Lars Grasedyck, \emph{Hierarchical singular value decomposition of tensors},
  SIAM Journal on Matrix Analysis and Applications \textbf{31} (2010), no.~4,
  2029--2054.

\bibitem{Hackbusch:18}
Wolfgang Hackbusch, \emph{On the representation of symmetric and antisymmetric
  tensors}, Contemporary Computational Mathematics-A Celebration of the 80th
  Birthday of Ian Sloan, Springer, 2018, pp.~483--515.

\bibitem{Hackbusch:09}
Wolfgang Hackbusch and Stefan K{\"u}hn, \emph{A new scheme for the tensor
  representation}, Journal of Fourier Analysis and Applications \textbf{15}
  (2009), no.~5, 706--722.

\bibitem{tenpy}
Johannes Hauschild and Frank Pollmann, \emph{Efficient numerical simulations
  with tensor networks: Tensor network python (tenpy)}, SciPost Physics Lecture
  Notes (2018).

\bibitem{Helgaker:00}
Trygve Helgaker, Poul Jorgensen, and Jeppe Olsen, \emph{Molecular
  electronic-structure theory}, John Wiley \& Sons, 2000.

\bibitem{Holtz:12}
Sebastian Holtz, Thorsten Rohwedder, and Reinhold Schneider, \emph{The
  alternating linear scheme for tensor optimization in the tensor train
  format}, SIAM Journal on Scientific Computing \textbf{34} (2012), no.~2,
  A683--A713.

\bibitem{holtz_manifolds_2012}
\bysame, \emph{On manifolds of tensors of fixed {TT}-rank}, Numerische
  Mathematik \textbf{120} (2012), no.~4, 701--731.

\bibitem{kazeev2013low}
Vladimir Kazeev, Oleg Reichmann, and Christoph Schwab, \emph{Low-rank tensor
  structure of linear diffusion operators in the {TT} and {QTT} formats},
  Linear Algebra and its Applications \textbf{438} (2013), no.~11, 4204--4221.

\bibitem{kazeev_low-rank_2012}
Vladimir~A. Kazeev and Boris~N. Khoromskij, \emph{Low-{Rank} {Explicit} {QTT}
  {Representation} of the {Laplace} {Operator} and {Its} {Inverse}}, SIAM J.
  Matrix Anal. \& Appl. \textbf{33} (2012), no.~3, 742--758.

\bibitem{keller2015efficient}
Sebastian Keller, Michele Dolfi, Matthias Troyer, and Markus Reiher, \emph{An
  efficient matrix product operator representation of the quantum chemical
  hamiltonian}, The Journal of chemical physics \textbf{143} (2015), no.~24,
  244118.

\bibitem{Kressner_SV_2013}
Daniel Kressner, Michael Steinlechner, and Bart Vandereycken, \emph{Low-rank
  tensor completion by {R}iemannian optimization}, BIT Numer. Math. \textbf{54}
  (2014), no.~2, 447--468.

\bibitem{Kressner:11}
Daniel Kressner and Christine Tobler, \emph{Preconditioned low-rank methods for
  high-dimensional elliptic pde eigenvalue problems}, Computational Methods in
  Applied Mathematics \textbf{11} (2011), no.~3, 363--381.

\bibitem{McCulloch:07}
Ian~P. McCulloch, \emph{From density-matrix renormalization group to matrix
  product states}, Journal of Statistical Mechanics: Theory and Experiment
  \textbf{2007} (2007), no.~10, P10014.

\bibitem{pytenet}
Christian~B. Mendl, \emph{Pytenet: A concise python implementation of quantum
  tensor network algorithms}, Journal of Open Source Software \textbf{3}
  (2018), no.~30, 948.

\bibitem{Oseledets:2011:TT}
Ivan~V. Oseledets, \emph{{T}ensor {T}rain decomposition}, SIAM Journal on
  Scientific Computing \textbf{33} (2011), no.~5, 2295--2317.

\bibitem{oster_approximating_2020}
Mathias Oster, Leon Sallandt, and Reinhold Schneider, \emph{Approximating the
  stationary {Hamilton}-{Jacobi}-{Bellman} equation by hierarchical tensor
  products}, arXiv:1911.00279 [math] (2020), arXiv: 1911.00279.

\bibitem{OR95}
Stellan {\"O}stlund and Stefan Rommer, \emph{Thermodynamic limit of density
  matrix renormalization}, Physical review letters \textbf{75} (1995), no.~19,
  3537.

\bibitem{tensornetwork}
Chase Roberts, Ashley Milsted, Martin Ganahl, Adam Zalcman, Bruce Fontaine,
  Yijian Zou, Jack Hidary, Guifre Vidal, and Stefan Leichenauer,
  \emph{Tensornetwork: A library for physics and machine learning},
  arXiv:1905.01330, 2019.

\bibitem{Rohwedder:11}
Thorsten Rohwedder, Reinhold Schneider, and Andreas Zeiser, \emph{Perturbed
  preconditioned inverse iteration for operator eigenvalue problems with
  applications to adaptive wavelet discretization}, Advances in Computational
  Mathematics \textbf{34} (2011), no.~1, 43--66.

\bibitem{Schollwoeck:11}
Ulrich Schollw{\"o}ck, \emph{The density-matrix renormalization group in the
  age of matrix product states}, Annals of physics \textbf{326} (2011), no.~1,
  96--192.

\bibitem{SPV11}
Sukhwinder Singh, Robert N.~C. Pfeifer, and Guifre Vidal, \emph{Tensor network
  states and algorithms in the presence of a global {$U(1)$} symmetry}, Phys.
  Rev. B \textbf{83} (2011), 115125.

\bibitem{steinlechner_riemannian_2016}
Michael Steinlechner, \emph{Riemannian optimization for high-dimensional tensor
  completion}, SIAM Journal on Scientific Computing \textbf{38} (2016), no.~5,
  S461--S484.

\bibitem{Szalay2015}
Szil{\'a}rd Szalay, Max Pfeffer, Valentin Murg, Gergely Barcza, Frank
  Verstraete, Reinhold Schneider, and {\"O}rs Legeza, \emph{Tensor product
  methods and entanglement optimization for ab initio quantum chemistry},
  International Journal of Quantum Chemistry \textbf{115} (2015), no.~19,
  1342--1391.

\bibitem{verstraete_renormalization_2004}
Frank Verstraete and Juan~Ignacio Cirac, \emph{Renormalization algorithms for
  quantum-many body systems in two and higher dimensions}, arXiv:
  cond-mat/0407066, 2004.

\bibitem{vidal03}
Guifr{\'e} Vidal, \emph{Efficient classical simulation of slightly entangled
  quantum computations}, Physical review letters \textbf{91} (2003), no.~14,
  147902.

\bibitem{VidalMERA}
\bysame, \emph{Entanglement renormalization}, Phys. Rev. Lett. \textbf{99}
  (2007), 220405.

\bibitem{white}
Steven~R. White, \emph{Density matrix formulation for quantum renormalization
  groups}, Phys. Rev. Lett. \textbf{69} (1992), 2863--2866.

\end{thebibliography}

\appendix

\section{Proof of Lemma \ref{lmm:oprankoneterms}}\label{app:oprankoneterms}

\begin{proof}
	We use induction over the eigenvalues of $\ten{P}$, which are $\{0,1,\ldots,K\}$. For the corresponding eigenspace for eigenvalue $N$, we write $U_N$. Let 
	\[
	\ten{B}_N =  \sum_{\substack{ D^+,D^- \subseteq \{1,\dots,K\} \\  \# D^+  = \# D^-  \leq N} } v_{D^+,D^-} \ten{a}_{D^+}^*\ten{a}_{D^-}.
	\]
	We aim to show that $\ten{B}_K = \ten{B}$ for appropriate coefficients $v_{D^+,D^-}$. For $\ten{B}_0$, we set $v_{\emptyset,\emptyset} = \langle \ten{e}_{\emptyset}, \ten{B} \ten{e}_{\emptyset} \rangle = \langle \vac, \ten{B} \vac \rangle$. Thus, $\ten{B}_0$ and $\ten{B}$ coincide on $U_0$. 
	
	Now assume we have $\ten{B}_N$ such that $\ten{B}_N$ and $\ten{B}$ coincide on $U_L$ with $L\leq N$. Then for ${\# D^+ = \# D^- =N+1}$ we set 
	\[
	v_{D^+,D^-} = \langle \ten{e}_{D^+}, \ten{a}_{D^+}^*\ten{a}_{D^-}\ten{e}_{D^-} \rangle \, \langle \ten{e}_{D^+},(\ten{B} - \ten{B}_N)\ten{e}_{D^-} \rangle,
	\]
	where $\ten{e}_{D^+}^T\ten{a}_{D^+}^*\ten{a}_{D^-}\ten{e}_{D^-}$, in view of~\eqref{def:anniloperator}, is $\pm 1$.
	We now show that $\ten{B}_{N+1}$ and $\ten{B}$ coincide on $U_L$ for $L\leq N+1$. Since for all $E^+, E^- \subset \{ 1,\ldots, K\}$ with ${\# E^+ = \# E^- \leq N}$ we have $\langle \ten{e}_{E^+},\ten{a}_{D^+}^*\ten{a}_{D^-}\ten{e}_{E^-}\rangle=0$, by the induction hypothesis, we obtain
	\[
	\langle \ten{e}_{E^+}, \ten{B}_{N+1}\ten{e}_{E^-} \rangle = \langle \ten{e}_{E^+}, \ten{B}_N\ten{e}_{E^-} \rangle = \langle \ten{e}_{E^+},\ten{B}\ten{e}_{E^-} \rangle.
	\]
Furthermore, we also have $\langle \ten{e}_{E^+},\ten{a}_{D^+}^*\ten{a}_{D^-}\ten{e}_{E^-}\rangle=0$ for ${\# E^+ =\# E^- = N+1}$ with  $E^+\neq D^+$ or $E^-\neq D^-$. Consequently, 
\begin{align*}
\langle \ten{e}_{D^+},\ten{B}_{N+1}\ten{e}_{D^-} \rangle &= \langle \ten{e}_{D^+}, \ten{B}_{N}\ten{e}_{D^-} \rangle + v_{D^+,D^-} \langle \ten{e}_{D^+},\ten{a}_{D^+}^*\ten{a}_{D^-}\ten{e}_{D^-} \rangle \\
&= \langle \ten{e}_{D^+}, \ten{B}_{N}\ten{e}_{D^-} \rangle + \langle \ten{e}_{D^+}, \ten{a}_{D^+}^*\ten{a}_{D^-}\ten{e}_{D^-}\rangle^2 \, \langle \ten{e}_{D^+}, (\ten{B} - \ten{B}_N) \ten{e}_{D^-} \rangle \\
&= \langle \ten{e}_{D^+}, \ten{B}\ten{e}_{D^-} \rangle. \qedhere
\end{align*}
\end{proof}

\section{Proof of Thm.~\ref{thm:oneelecasMPO}}\label{app:oneelecproof}

\begin{proof}
For $k=1,\ldots, K$, we have the two cases $k\leq \frac{K}{2}$ and $ k > \frac{K}{2}$. We begin with $k\leq \frac{K}{2}$ and define the first $k$ factors of $\ten a_i^*$ and $\ten a_j$
by
\begin{equation*}
\ten a_{k,i}^* = \biggl( \bigotimes_{\ell=1}^{i-1} S \biggr) \otimes A^* \otimes \biggl( \bigotimes_{\ell=i+1}^{k} I \biggr), \quad \ten a_{k,j} = \biggl( \bigotimes_{\ell=1}^{j-1} S \biggr) \otimes A \otimes \biggl( \bigotimes_{\ell=j+1}^{k} I \biggr).
\end{equation*}
Clearly, if $\max\{i, j\} > k$, the actual values of $i,j$ are irrelevant. Thus, we write
\begin{equation*}
\ten a_{k,>}^* = \ten a_{k,k+1}^* = \dots = \ten a_{k,K}^*, \quad \ten a_{k,>} = \ten a_{k,k+1} = \dots = \ten a_{k,K}
\end{equation*}
and in particular
\begin{equation*}
\ten{I}_k = \ten a_{k,>}^* \ten a_{k,>} = \bigotimes_{\ell=1}^k I.
\end{equation*}
We further define
\begin{equation*}
\rep{a}_k^* = 
\begin{bmatrix} 
\ten a_{k,i}^* \ten a_{k,>}
\end{bmatrix}^{i=1,\ldots,k}, \quad
\rep{a}_k = 
\begin{bmatrix} 
\ten a_{k,>}^* \ten a_{k,j}
\end{bmatrix}^{j=1,\ldots,k}, \quad
\ten{S}_k = \sum_{i,j=1}^k t_{ij} \ten{a}_{k,i}^* \ten{a}_{k,j}.
\end{equation*}
With this, for $k \leq \frac{K}{2}$, we can show by induction that 
\[
T_1\SKP T_2\SKP \cdots \SKP T_{k} = 
\begin{bmatrix} 
\ten{I}_k &\rep{a}_k & \rep{a}_k^* & \ten{S}_k
\end{bmatrix}.
\]
This holds true by definition for $k=1$. For $k>1$ we calculate
\begin{align*}
&\begin{bmatrix} \ten{I}_{k-1}
&\rep{a}_{k-1}& \rep{a}_{k-1}^* & \ten{S}_{k-1}
\end{bmatrix} \SKP \begin{bmatrix}
I & 0  &A & 0 &A^*& t_{k,k}A^*A\\
0 & \rep{S}_{k-1} & 0 &  0 & 0 & W_{T,k}^1 \uparrow A^*\\
0 & 0 & 0 & \rep{S}_{k-1} & 0 & W_{T,k}^2 \uparrow A \\
0 & 0 & 0 & 0 & 0 & I \\
\end{bmatrix} \\
&= \begin{bmatrix} \ten{I}_{k}
& \rep{a}_{k}& \rep{a}_{k}^* & \left(\ten{I}_{k-1} \otimes t_{k,k}A^*A + \sum_{i=1}^{k-1} t_{i,k} \ten{a}_{k,i}^* \ten{a}_{k,k} + \sum_{i=1}^{k-1} t_{k,i} \ten{a}_{k,k}^* \ten{a}_{k,i} + \ten{S}_{k-1} \otimes I\right)
\end{bmatrix}\\
&=\begin{bmatrix} \ten{I}_k
&\rep{a}_k & \rep{a}_k^* & \ten{S}_k
\end{bmatrix}.
\end{align*}
Similarly, it can be shown for $k > \frac{K}{2}$ that 
\[
T_{k} \SKP \cdots \SKP T_K = \begin{bmatrix}
\tilde{\ten{I}}_{k} \\ \tilde{ \rep{a}}_k \\ \tilde{\rep{a}}_k^*\\ \tilde{\ten{S}}_k
\end{bmatrix},
\]
where
\begin{equation*}
\tilde{\ten{I}}_k = \bigotimes_{\ell=k}^K I, \quad
\tilde{\rep{a}}_k^* = \begin{bmatrix}
\tilde{\ten a}_{k,i}^* \tilde{\ten a}_{k,<}
\end{bmatrix}_{i=k,\ldots,K}, \quad
\tilde{\rep{a}}_k = \begin{bmatrix}
\tilde{\ten a}_{k,<}^* \tilde{\ten a}_{k,j}
\end{bmatrix}_{j=k,\ldots,K}, \quad
\tilde{\ten{S}}_k = \sum_{i,j=k}^K t_{ij} \tilde{\ten{a}}_{k,i}^* \tilde{\ten{a}}_{k,j}
\end{equation*}
and $\tilde{\ten a}_{k,i}^*, \tilde{\ten a}_{k,j}, \tilde{\ten a}_{k,<}^*$ and $\tilde{\ten a}_{k,<}$ are defined analogously to the above for the last $k$ factors of $\ten a_i^*$ and $\ten a_j$ respectively. With this we calculate
\begin{align*}
&\begin{bmatrix} 
\ten{I}_{K/2} & \rep{a}_{K/2} & \rep{a}_{K/2}^* & \ten{S}_{K/2}
\end{bmatrix} M_T \SKP \begin{bmatrix}
\ten{I}_{K/2+1} \\ \tilde{\rep{a}}_{K/2+1} \\ \tilde{\rep{a}}_{K/2+1}^* \\ \tilde{\ten{S}}_{K/2+1}
\end{bmatrix} \\
&= \sum_{i,j=1}^{K/2} t_{ij} \ten{a}_i^* \ten{a}_j + \rep{a}_{K/2}^*W_{T}^5 \tilde{\rep{a}}_{K/2+1} + \rep{a}_{K/2} W_{T}^6 \tilde{\rep{a}}_{K/2+1}^* + \sum_{i,j=K/2+1}^K t_{ij} \tilde{\ten{a}}_i^* \tilde{\ten{a}}_j
= \ten{S},
\end{align*}
because 
\begin{align*}
\rep{a}_{K/2}^*W_{T}^5\tilde{\rep{a}}_{K/2+1} &= \begin{bmatrix}
\displaystyle \sum_{i=1}^{K/2} t_{i,K} \ten a_{k,i}^* \ten a_{k,>} & \cdots & \displaystyle \sum_{i=1}^{K/2}t_{i,K/2+1} \ten a_{k,i}^* \ten a_{k,>} \end{bmatrix} \tilde{\rep{a}}_{K/2+1} = \sum_{i=1}^{K/2} \sum_{j=K/2+1}^K t_{ij} \ten{a}_i^* \ten{a}_j, \\
\rep{a}_{K/2}W_{T}^6\tilde{\rep{a}}_{K/2+1}^* &= \begin{bmatrix}
\displaystyle \sum_{j=1}^{K/2} t_{K,j} \ten a_{k,>}^* \ten a_{k,j} & \cdots & \displaystyle \sum_{j=1}^{K/2} t_{K/2+1,j} \ten a_{k,>}^* \ten a_{k,j} \end{bmatrix} \tilde{\rep{a}}_{K/2+1}^* = \sum_{i=K/2+1}^K \sum_{j=1}^{K/2}  t_{ij} \ten{a}_i^* \ten{a}_j.
\end{align*}
The rank of $T_k$ for $k \leq \frac{K}{2}$ can be bounded by $2+2k$. Consequently, for $k=\frac{K}{2}$ we find that the rank of $\ten{S}$ can be bounded by $K+2$. This is also the bound for the rank of the matrix $M_T$. For the sparse coefficient matrix we directly consider $M_T$, since the rank is maximized at the center of the representation, where the rank of $W_{T}^5$ and the rank of $W_{T}^6$ can be bounded by $d$ in both cases. Thus the rank of $M_T$ is bounded by $2d+2$.
\end{proof}

\section{Proof of Thm.~\ref{thm:twoelecasMPO}}\label{app:twoelecproof}

\begin{proof}
We use the same notation as in the proof of Theorem~\ref{thm:oneelecasMPO}.
With this, we define
\begin{align*}
\ten{I}_k &= \ten a_{k,>}^* \ten a_{k,>}^* \ten a_{k,>} \ten a_{k,>} = \bigotimes_{\ell=1}^k I, \\
\rep{a}_k^* &= \begin{bmatrix} \ten a_{k,i_1}^* \ten a_{k,>}^* \ten a_{k,>} \ten a_{k,>} \end{bmatrix}^{i_1=1,\ldots,k}, \\
\rep{a}_k &= \begin{bmatrix} \ten a_{k,>}^* \ten a_{k,>}^* \ten a_{k,j_1} \ten a_{k,>} \end{bmatrix}^{j_1=1,\ldots,k}, \\
\rep{b}_k &= \begin{bmatrix} \ten a_{k,i_1}^* \ten a_{k,>}^* \ten a_{k,j_1} \ten a_{k,>} \end{bmatrix}^{i_1=1,\ldots,k; j_1=1,\ldots,k}, & &\\
\rep{c}_k^* &= \begin{bmatrix} \ten a_{k,i_1}^* \ten a_{k,i_1}^* \ten a_{k,>} \ten a_{k,>} \end{bmatrix}^{i_1=1,\ldots,k; i_2=i_1,\ldots,k}, \\
\intertext{as well as}
\rep{c}_k &= \begin{bmatrix} \ten a_{k,>}^* \ten a_{k,>}^* \ten a_{k,j_1} \ten a_{k,j_2} \end{bmatrix}^{j_1=1,\ldots,k; j_2=j_1,\ldots,k}, \\
\rep{e}_k^* &= \begin{bmatrix} \displaystyle \sum_{\substack{ i_1,j_1,j_2=1 \\ j_1<j_2 } }^k \tilde{v}_{i_1 i_2 j_1 j_2} \ten a_{k,i_1}^* \ten a_{k,i_2}^* \ten a_{k,j_1} \ten a_{k,j_2} \end{bmatrix}^{i_2=k+1,\ldots,K}, \\
\rep{e}_k &= \begin{bmatrix} \displaystyle \sum_{\substack{ i_1,i_2,j_1=1 \\ i_1<i_2 } }^k \tilde{v}_{i_1 i_2 j_1 j_2} \ten a_{k,i_1}^* \ten a_{k,i_2}^* \ten a_{k,j_1} \ten a_{k,j_2} \end{bmatrix}^{j_2=k+1,\ldots,K},\\
\ten{D}_n &= \sum_{\substack{ i_1,i_2,j_1,j_2=1 \\ i_1<i_2,\,j_1<j_2}}^k \tilde{v}_{i_1 i_2 j_1 j_2} \ten{a}_{k,i_1}^*\ten{a}_{k,i_2}^*\ten{a}_{k,j_1}\ten{a}_{k,j_2}.
\end{align*}
We again proceed by induction and show that 
\begin{equation*}
V_1 \SKP \dots \SKP V_k = \begin{bmatrix}
\ten{I}_{k} & \rep{a}_{k}^* & \rep{a}_{k} & \rep{b}_{k} & \rep{c}_{k}^* & \rep{c}_{k} & \rep{e}_{k} & \rep{e}_{k}^* & \ten{D}_{k}
\end{bmatrix}.
\end{equation*}
This holds for $k=1$ and for $k=2,\ldots, \frac{K}{2}$, we want to show
\begin{multline*}
\begin{bmatrix}
\ten{I}_{k-1} & \rep{a}_{k-1}^* & \rep{a}_{k-1} & \rep{b}_{k-1} &\rep{c}_{k-1}^* & \rep{c}_{k-1} & \rep{e}_{k-1} & \rep{e}_{k-1}^* & \ten{D}_{k-1}
\end{bmatrix} \SKP \begin{bmatrix}
V_k^{1,1} & V_k^{1,2}\\ 0 &V_k^{2,2}
\end{bmatrix} \\
= \begin{bmatrix}
\ten{I}_{k} & \rep{a}_{k}^* & \rep{a}_{k} & \rep{b}_{k} & \rep{c}_{k}^* & \rep{c}_{k} & \rep{e}_{k} & \rep{e}_{k}^* & \ten{D}_{k}
\end{bmatrix}.
\end{multline*}
The product with $V_k^{1,1}$ is rather straightforward. For the products with $V_k^{1,2}$ and $V_k^{2,2}$, we calculate
\begin{align*}
\rep{a}_{k-1}^* \SKP \, (W_{V,k}^1\uparrow A^*A) &= \begin{bmatrix} \displaystyle \sum_{i_1=1}^{k-1} \tilde v_{i_1kkj_2} \ten a_{k,i_1}^* \ten a_{k,>}^* \ten a_{k,>} \ten a_{k,>} \otimes A^*A \end{bmatrix}^{j_2=k+1,\ldots, K} \\
&= \begin{bmatrix} \displaystyle \sum_{i_1=1}^{k-1} \tilde v_{i_1kkj_2} \ten a_{k,i_1}^* \ten a_{k,k}^* \ten a_{k,k} \ten a_{k,>} \end{bmatrix}^{j_2=k+1,\ldots, K},\\
\rep{b}_{k-1} \SKP \, (W_{V,k}^3\uparrow A^*) &= \begin{bmatrix} \displaystyle \sum_{i_1,j_1=1}^{k-1} \tilde v_{i_1kj_1j_2} \ten a_{k,i_1}^* \ten a_{k,>}^* \ten a_{k,j_1} \ten a_{k,>} \otimes A^* \end{bmatrix}^{j_2=k+1\cdots K} \\
&= \begin{bmatrix} \displaystyle \sum_{i_1,j_1=1}^{k-1} \tilde v_{i_1kj_1j_2} \ten a_{k,i_1}^* \ten a_{k,k}^* \ten a_{k,j_1} \ten a_{k,>} \end{bmatrix}^{j_2=k+1\cdots K} \\
\intertext{and}
\rep{c}_{k-1}^* \SKP \, (W_{V,k}^6\uparrow A) &= \begin{bmatrix} \displaystyle \sum_{\substack{i_1,i_2=1 \\ i_1 < i_2}}^{k-1} \tilde v_{i_1i_2kj_2} \ten a_{k,i_1}^* \ten a_{k,i_1}^* \ten a_{k,>} \ten a_{k,>} \otimes A \end{bmatrix}^{j_2=k+1,\ldots, K} \\
&= \begin{bmatrix} \displaystyle \sum_{\substack{i_1,i_2=1 \\ i_1 < i_2}}^{k-1} \tilde v_{i_1i_2kj_2} \ten a_{k,i_1}^* \ten a_{k,i_1}^* \ten a_{k,k} \ten a_{k,>} \end{bmatrix}^{j_2=k+1,\ldots, K} \\
\intertext{as well as}
\rep{e}_{k-1} \SKP \rep{S}_{K-k} &= \begin{bmatrix} \displaystyle \sum_{\substack{i_1,i_2,j_1=1 \\ i_1 < i_2}}^{k-1} \tilde v_{i_1i_2j_1j_2} \ten a_{k,i_1}^* \ten a_{k,i_2}^* \ten a_{k,j_1} \ten a_{k,j_2} \otimes S \end{bmatrix}^{j_2=k+1,\ldots, K} \\
&= \begin{bmatrix} \displaystyle \sum_{\substack{i_1,i_2,j_1=1 \\ i_1 < i_2}}^{k-1} \tilde v_{i_1i_2j_1j_2} \ten a_{k,i_1}^* \ten a_{k,i_2}^* \ten a_{k,j_1} \ten a_{k,j_2} \end{bmatrix}^{j_2=k+1,\ldots, K} , \\
\end{align*}
and with this 
\begin{equation*}
\rep{a}_{k-1}^* \SKP \,(W_{V,k}^1\uparrow A^*A) + \rep{b}_{k-1} \SKP \,(W_{V,k}^3\uparrow A^*) + \rep{c}_{k-1}^* \SKP \, (W_{V,k}^6\uparrow A) + \rep{e}_{k-1} \SKP \rep{S}_{K-k}= \rep{e}_k.
\end{equation*}
Similarly, we show
\begin{equation*}
\rep{a}_{k-1} \SKP \,(W_{V,k}^2\uparrow A^*A) + \rep{b}_{k-1} \SKP \,(W_{V,k}^4\uparrow A) + \rep{c}_{k-1} \SKP \,(W_{V,k}^7\uparrow A^*) + \rep{e}_{k-1}^* \SKP \rep{S}_{K-k} = \rep{e}_k^* 
\end{equation*}
and
\begin{multline*}
\rep{b}_{k-1} \SKP \,(W_{V,k}^5\uparrow A^*A) + \sum_{\substack{ i_1,i_2,j_1=1 \\ i_1<i_2 } }^k \tilde{v}_{i_1 i_2 j_1 j_2} \ten a_{k,i_1}^* \ten a_{k,i_2}^* \ten a_{k,j_1} \ten a_{k,k} \otimes A + \\ \sum_{\substack{ i_1,j_1,j_2=1 \\ j_1<j_2 } }^k \tilde{v}_{i_1 i_2 j_1 j_2} \ten a_{k,i_1}^* \ten a_{k,k}^* \ten a_{k,j_1} \ten a_{k,j_2} \otimes A^* + \sum_{\substack{ i_1,i_2,j_1,j_2=1 \\ i_1<i_2,\,j_1<j_2}}^k \tilde{v}_{i_1 i_2 j_1 j_2} \ten{a}_{k,i_1}^*\ten{a}_{k,i_2}^*\ten{a}_{k,j_1}\ten{a}_{k,j_2} \otimes I = \ten{D}_k.  
\end{multline*}

We proceed in the same fashion for $k = \frac{K}{2} +1 ,\ldots, K$. We define 
$\ten{\tilde{I}}_{k}$, $\rep{\tilde{a}}_{k}^*$, $\rep{\tilde{a}}_{k}$, $\rep{\tilde{b}}_{k}$, $\rep{\tilde{c}}_{k}^*$, $\rep{\tilde{c}}_{k}$, $\rep{\tilde{e}}_{k}$, $\rep{\tilde{e}}_{k}^*$, and $\ten{\tilde{D}}_{k}$
accordingly, such that
\begin{align*}
V_k \SKP \begin{bmatrix}
	\ten{\tilde{I}}_{k+1} \\ \rep{\tilde{a}}_{k+1}^* \\ \rep{\tilde{a}}_{k+1} \\ \rep{\tilde{b}}_{k+1}  \\ \rep{\tilde{c}}_{k+1}^* \\ \rep{\tilde{c}}_{k+1} \\ \rep{\tilde{e}}_{k+1} \\ \rep{\tilde{e}}_{k+1}^* \\ \ten{\tilde{D}}_{k+1}
\end{bmatrix}
=& \begin{bmatrix}
	\ten{\tilde{I}}_{k} \\ \rep{\tilde{a}}_{k}^* \\\rep{\tilde{a}}_{k} \\ \rep{\tilde{b}}_{k} \\ \rep{\tilde c}_{k}^* \\ \ten{\tilde c}_{k} \\ \rep{\tilde{e}}_{k} \\\rep{\tilde{e}}_{k}^* \\\ten{\tilde{D}}_{k}
\end{bmatrix}.
\end{align*}
It remains to show that 
\begin{equation}\label{eq:fullD}
\ten{D}=\begin{bmatrix}
\ten{I}_{K/2} & \rep{a}_{K/2}^* & \rep{a}_{K/2} & \rep{b}_{K/2} & \rep{c}_{K/2}^* & \rep{c}_{K/2} & \rep{e}_{K/2} & \rep{e}_{K/2}^* & \ten{D}_{K/2}
\end{bmatrix}M_V\SKP  \begin{bmatrix}
\ten{\tilde{I}}_{K/2+1} \\ \rep{\tilde{a}}_{K/2+1}^* \\\rep{\tilde{a}}_{K/2+1} \\ \rep{\tilde{b}}_{K/2+1} \\ \rep{c}_{K/2+1}^* \\ \rep{c}_{K/2+1} \\ \rep{\tilde{e}}_{K/2+1} \\\rep{\tilde{e}}_{K/2+1}^* \\\ten{\tilde{D}}_{K/2+1}\end{bmatrix}.
\end{equation}
This holds due to the anti-diagonal structure of $M_V$ and with an explicit calculation of $\rep{b}_{K/2} W_{V,k}^8 \rep{b}_{K/2 + 1}$, $\rep{c}_{K/2}^* W_{V,k}^9 \rep{c}_{K/2 + 1}$, and $\rep{c}_{K/2} W_{V,k}^{10} \rep{c}_{K/2 + 1}^*$.

Finally, we consider representation ranks. For non-sparse $\tilde V$, the number of columns in $V_k^{1,1}$ and $V_k^{1,2}$ are given by
\[
1+3k+4\binom{k}{2}\quad\text{and} \quad 2(K-k)+1,
\]
respectively. So for $k = \frac{K}{2}$ we get
\[
2+5\frac{K}{2}+4\binom{\frac{K}{2}}{2} = 2+5\frac{K}{2}+\frac{4}{2}\frac{K^2}{4}-\frac{4}{2} \frac{K}{2} = 2+\frac{3}{2}K+\frac{1}{2}K^2
\]
which is an upper bound for the MPO-rank of $\ten D$. 

For the sparse case \eqref{eq:localityassumption}, we only consider at the matrix $M_V$, since it determines the highest rank in the representation of $\ten D$. In this case,
 the contributions to the rank of the left and right components in~\eqref{eq:fullD} are as follows: Each of the two combinations of components $\ten{I}_{K/2}$ and $\ten{\tilde{D}}_{K/2+1}$ as well as $\ten{D}_{K/2}$ and $\ten{\tilde{I}}_{K/2+1}$ contributes one to the rank. Each of the four combinations of $\rep{a}_{K/2}^*$ and $\rep{\tilde{e}}^*_{K/2+1}$; $\rep{a}_{K/2}$ and $\rep{\tilde{e}}_{K/2+1}$; $\rep{e}_{K/2}$ and $\rep{\tilde{a}}_{K/2+1}$; $\rep{e}_{K/2}^*$ and $\rep{\tilde{a}}_{K/2+1}^*$ contributes $d-1$. In the case of $\rep{b}_{K/2}$ and $\rep{\tilde{b}}_{K/2+1}$, we obtain
 \[
   \sum_{\ell=1}^{d-1}\min\{2\ell-1,2(d-\ell)-1\}=\begin{cases}
\frac 12 (d-1)^2 +d & d \textrm{ odd, }\\
 \frac 12 d^2 & d \textrm{ even; }
\end{cases}
 \]
 in the case of $\rep{c}_{K/2}^*$ and $\rep{\tilde{c}}_{K/2+1}$,
 \[
 \sum_{\ell=2}^{d-1}\min\{\ell-1,d-\ell\}=\begin{cases}
2\binom{\frac 12 (d-1)}{2} +\frac 12 (d-1) & d \textrm{ odd, }\\
2\binom{\frac 12 d}{2} & d \textrm{ even; }
\end{cases}
 \]
 and in the case of $\rep{c}_{K/2}$ and $\rep{\tilde{c}}_{K/2+1}^*$,
 \[
   \sum_{\ell=2}^{d-1}\min\{\ell-1,d-\ell\}=\begin{cases}
2\binom{\frac 12 (d-1)}{2} +\frac 12 (d-1) & d \textrm{ odd, }\\
2\binom{\frac 12 d}{2} & d \textrm{ even, }
\end{cases}
 \]
 and summing up these contributions completes the proof.
\end{proof}

\end{document}